\newcommandx{\unsure}[2][1=]{\todo[inline,linecolor=red,backgroundcolor=red!25,bordercolor=red,#1]{#2}}
\newcommandx{\change}[2][1=]{\todo[linecolor=blue,backgroundcolor=blue!25,bordercolor=blue,#1]{#2}}
\newcommandx{\info}[2][1=]{\todo[linecolor=OliveGreen,inline,backgroundcolor=OliveGreen!25,bordercolor=OliveGreen,#1]{#2}}
\newcommandx{\improvement}[2][1=]{\todo[linecolor=Plum,inline,backgroundcolor=Plum!25,bordercolor=Plum,#1]{#2}}
\newtheorem{thm}{Theorem}
\newtheorem{lem}{Lemma}
\newtheorem{rem}{Remark}
\newtheorem{ass}{Assumption}
\newcommand{\ve}[2]{\langle #1, #2 \rangle}
\newcolumntype{C}[1]{>{\centering\let\newline\\\arraybackslash\hspace{0pt}}m{#1}}
\newcommand\tagthis{\addtocounter{equation}{1}\tag{\theequation}}
\DeclareMathOperator{\Prob}{\mathbb{P}}           
\DeclareMathOperator{\Exp}{\mathbb{E}}           % expectation
\DeclareMathOperator{\R}{\mathbb{R}} 
\newcommand{\eqdef}{\stackrel{\text{def}}{=}}
\newcounter{subthm} 
\let\savedc@thm\c@hyp
\newcommand{\normhyp}{%
  \let\c@hyp\savedc@hyp % revert to the old one
  \renewcommand\thehyp{\arabic{hyp}}%
} 
\newcounter{subass} 
\let\savedc@ass\c@hyp
\icmltitlerunning{SGD and Hogwild! Convergence Without the Bounded Gradients Assumption}
\begin{document}

\twocolumn[
% \icmltitle{Gradients Needs Not Be Bounded for SGD to Converge}
\icmltitle{SGD and Hogwild! Convergence Without the Bounded Gradients Assumption}

% It is OKAY to include author information, even for blind
% submissions: the style file will automatically remove it for you
% unless you've provided the [accepted] option to the icml2018
% package.

% List of affiliations: The first argument should be a (short)
% identifier you will use later to specify author affiliations
% Academic affiliations should list Department, University, City, Region, Country
% Industry affiliations should list Company, City, Region, Country

% You can specify symbols, otherwise they are numbered in order.
% Ideally, you should not use this facility. Affiliations will be numbered
% in order of appearance and this is the preferred way.
%\icmlsetsymbol{equal}{*}

\begin{icmlauthorlist}
\icmlauthor{Lam M. Nguyen}{af1,af4}
\icmlauthor{Phuong Ha Nguyen}{af2}
\icmlauthor{Marten van Dijk}{af2}
\icmlauthor{Peter Richt\'{a}rik}{af3}
\icmlauthor{Katya Scheinberg}{af1}
\icmlauthor{Martin Tak\'{a}\v{c}}{af1}
\end{icmlauthorlist}

\icmlaffiliation{af1}{Department of Industrial and Systems Engineering, Lehigh University, USA.}
\icmlaffiliation{af4}{IBM Thomas J. Watson Research Center, USA.}
\icmlaffiliation{af2}{Department of Electrical and Computer Engineering, University of Connecticut, USA.}
\icmlaffiliation{af3}{KAUST, KSA --- Edinburgh, UK ---  MIPT, Russia}

\icmlcorrespondingauthor{Lam M. Nguyen}{LamNguyen.MLTD@gmail.com}
\icmlcorrespondingauthor{Phuong Ha Nguyen}{phuongha.ntu@gmail.com}
\icmlcorrespondingauthor{Marten van Dijk}{marten.van$\_$dijk@uconn.edu}
\icmlcorrespondingauthor{Peter Richt\'{a}rik}{Peter.Richtarik@ed.ac.uk}
\icmlcorrespondingauthor{Katya Scheinberg}{katyas@lehigh.edu}
\icmlcorrespondingauthor{Martin Tak\'{a}\v{c}}{Takac.MT@gmail.com}

% You may provide any keywords that you
% find helpful for describing your paper; these are used to populate
% the "keywords" metadata in the PDF but will not be shown in the document
\icmlkeywords{Machine Learning, ICML}

\vskip 0.3in
]

% this must go after the closing bracket ] following \twocolumn[ ...

% This command actually creates the footnote in the first column
% listing the affiliations and the copyright notice.
% The command takes one argument, which is text to display at the start of the footnote.
% The \icmlEqualContribution command is standard text for equal contribution.
% Remove it (just {}) if you do not need this facility.

%\printAffiliationsAndNotice{}  % leave blank if no need to mention equal contribution
\printAffiliationsAndNotice{\icmlEqualContribution} % otherwise use the standard text.

\begin{abstract}
Stochastic gradient descent (SGD) is the optimization algorithm of choice in many machine learning applications such as regularized empirical risk minimization and training deep neural networks. The classical convergence analysis of SGD is carried out under the assumption  that the norm of the stochastic gradient is uniformly bounded. While this might hold for some loss functions, it is always violated for  cases where the objective  function is strongly convex.  In \cite{bottou2016optimization}, a new analysis of convergence of SGD is performed under the assumption that stochastic gradients are bounded with respect to the true gradient norm. Here we show that for stochastic problems arising in machine learning
such bound always holds; and we also propose an alternative convergence analysis of SGD with diminishing learning rate regime, which results in more relaxed conditions than those in \cite{bottou2016optimization}. We then move on the asynchronous parallel setting, and prove convergence of Hogwild! algorithm in the same regime, obtaining the first convergence results for this method in the case of diminished learning rate.
\end{abstract}

\section{Introduction}\label{intro}

We are interested in solving the following stochastic optimization problem
\begin{align*}
\min_{w \in \mathbb{R}^d} \left\{ F(w) = \mathbb{E} [ f(w;\xi) ] \right\}, \tagthis \label{main_prob_expected_risk}  
\end{align*}
where $\xi$ is a random variable  obeying some distribution. 

In the case of empirical risk minimization with a training set $\{(x_i,y_i)\}_{i=1}^n$, $\xi_i$ is a random variable that is defined 
 by a single random sample  $(x,y)$  pulled uniformly from the training set. Then,  by defining  $f_i(w) := f(w;\xi_{i})$, empirical risk minimization reduces to  
\begin{gather}\label{main_prob}
\min_{w \in \mathbb{R}^d} \left\{ F(w) = \frac{1}{n} \sum_{i=1}^n f_i(w) \right\}.  
\end{gather}

Problem  \eqref{main_prob} arises frequently in supervised learning applications \cite{ESL}. For  a wide range of applications, such as linear regression and logistic regression, the objective function $F$ is strongly convex and each $f_i$, $i \in [n]$, is convex and has Lipschitz continuous gradients (with Lipschitz constant $L$).   Given a training set $\{(x_i,y_i)\}_{i=1}^n$ with $x_i \in\R^{d}, y_i\in\R$, the $\ell_2$-regularized least squares regression model, for example, is written as \eqref{main_prob} with $f_i(w)\eqdef (\langle x_i,w \rangle -y_i)^2 + \frac{\lambda}{2} \|w\|^2$. The $\ell_2$-regularized logistic regression  for binary classification is written with $f_i(w) \eqdef \log (1+\exp(-y_i \langle x_i,w \rangle)) + \frac{\lambda}{2}\|w\|^2$, $y_i\in\{-1,1\}$. It is well established by now that solving this type of problem by gradient descent (GD) \cite{nesterov2004,Nocedal2006NO} may be prohibitively expensive and stochastic gradient descent (SGD) is thus preferable.  Recently, a class of variance reduction methods \cite{SAG,SAGA,SVRG,nguyen2017sarah} has been proposed in order to reduce the computational cost. All these methods explicitly exploit the finite sum form of \eqref{main_prob} and thus they  have some disadvantages for very large scale machine learning problems and are not applicable to \eqref{main_prob_expected_risk}. 

To apply SGD to the general form \eqref{main_prob_expected_risk}  one needs to assume existence of unbiased gradient estimators. This is usually defined as follows: 
 $$\mathbb{E_\xi}[\nabla f(w;\xi)] = \nabla F(w),$$ 
 for any fixed $w$. 
Here we make an important observation: if we view \eqref{main_prob_expected_risk} not as a general stochastic problem but as the expected risk minimization problem, where $\xi$ corresponds to a random data sample pulled from a distribution,
then  \eqref{main_prob_expected_risk} has  an additional key property: for each realization of the random variable $\xi$, $f(w;\xi)$ is a convex function with Lipschitz continuous gradients.
Notice that traditional analysis of SGD for general stochastic problem of the form  \eqref{main_prob_expected_risk} does not make any assumptions on individual function realizations. 
In this paper we derive convergence properties for SGD applied to \eqref{main_prob_expected_risk} with these additional assumptions on $f(w;\xi)$ and also extend to the case when  $f(w;\xi)$  are not necessarily convex.

%In this paper, we consider the classical stochastic gradient descent (SGD)~\footnote{We mark here that even though stochastic gradient is referred to as SG in literature, the term stochastic gradient descent (SGD) has been widely used in many important works of large-scale learning} \cite{RM1951} algorithm, which does not depend on the size of data $n$. Instead of evaluating full gradient, SGD evaluates only a single component gradient at each step.  
%where $i_t \in [n]$, $t \geq 0$, is chosen uniformly at random, as follows
%\begin{gather*}
%w_{t+1} = w_{t} - \eta_t \nabla f_{i_t}(w_{t}).  \tagthis \label{sgd_step}
%\end{gather*}

%Obviously, the computational cost of SGD is $n$ times cheaper than that of GD. However, we need to choose $\eta_t = \Ocal(1/t)$ and the convergence rate of SGD is slowed down to $\Ocal(1/t)$ \cite{bottou2016optimization}, which is a sublinear convergence rate. 

Regardless of the properties of $f(w;\xi)$ we assume that $F$ in \eqref{main_prob_expected_risk}  is strongly convex. We define the (unique) optimal solution of $F$ as $w_{*}$.  
\begin{ass}[$\mu$-strongly convex]
\label{ass_stronglyconvex}
The objective function $F: \mathbb{R}^d \to \mathbb{R}$ is a $\mu$-strongly convex, i.e., there exists a constant $\mu > 0$ such that $\forall w,w' \in \mathbb{R}^d$, 
\begin{gather*}
F(w) - F(w') \geq \langle \nabla F(w'), (w - w') \rangle + \frac{\mu}{2}\|w - w'\|^2. \tagthis\label{eq:stronglyconvex_00}
\end{gather*}
\end{ass}
It is well-known in literature \cite{nesterov2004,bottou2016optimization} that Assumption \ref{ass_stronglyconvex} implies
\begin{align*}
2\mu [ F(w) - F(w_{*}) ] \leq \| \nabla F(w) \|^2 \ , \ \forall w \in \mathbb{R}^d. \tagthis\label{eq:stronglyconvex}
\end{align*}

The classical theoretical analysis of SGD assumes that the  \textit{ stochastic gradients are uniformly bounded}, i.e. there exists a finite (fixed) constant $\sigma<\infty$, such that
\begin{align}\label{bounded_grad_ass}
\mathbb{E}[\| \nabla f(w; \xi) \|^2] \leq \sigma^2 \ , \ \forall w \in \mathbb{R}^d
\end{align}
(see e.g. \cite{ShalevShwartz2007,Nemirovski2009,Hogwild,Hazan2014,Rakhlin2012}, etc.). However, this assumption is clearly false if $F$ is strongly convex. Specifically, under this assumption together with strong convexity,  $\forall w \in \mathbb{R}^d$, we have
\begin{align*}
2\mu [ F(w) - F(w_{*}) ] & \overset{\eqref{eq:stronglyconvex}}{\leq} \| \nabla F(w) \|^2 = \left\| \mathbb{E}[ \nabla f(w; \xi) ] \right\|^2 \\ &\leq \mathbb{E}[ \| \nabla f(w; \xi) \|^2 ] \overset{\eqref{bounded_grad_ass}}{\leq} \sigma^2. 
\end{align*} 
Hence, 
\begin{align*}
F(w) \leq \frac{\sigma^2}{2\mu} + F(w_{*}) \ , \ \forall w \in \mathbb{R}^d. 
\end{align*}
On the other hand strong convexity and $\nabla F(w_{*})=0$ imply
\begin{align*}
F(w) \geq {\mu} \|w-w_*\|^2+ F(w_{*}) \ , \ \forall w \in \mathbb{R}^d. 
\end{align*}
The last two inequalities are clearly in  contradiction with each other for sufficiently large  $\|w-w_*\|^2$. 

Let us consider the following example: $f_1(w)= \frac{1}{2} w^2$ and $f_2(w)=w$ with $F(w)=\frac{1}{2}(f_1(w)+f_2(w))$. Note that $F$ is strongly convex, while individual realizations are not necessarily so.  Let $w_0=0$, for any number $t\geq 0$, with probability $\frac{1}{2^t}$ the steps of SGD algorithm for all $i< t$ are $w_{i+1}=w_i-\eta_i$. This implies that
$w_t=-\sum_{i=1}^t \eta_i$ and  since $\sum_{i=1}^\infty\eta_i =\infty$ then $|w_t|$ can be arbitrarily large for large enough $t$ with probability $\frac{1}{2^t}$. Noting that for this example, $\mathbb{E}[\| \nabla f(w_t; \xi) \|^2] =\frac{1}{2}w^2_t+\frac{1}{2}$, we see that $\mathbb{E}[\| \nabla f(w_t; \xi) \|^2]$ can also be arbitrarily large. 

%Instead of assuming that \eqref{bounded_grad_ass} is bounded for all $w$, it is enough to  assume $\mathbb{E}[\| \nabla f(w_t; \xi_t) \|^2] \leq \sigma^2$, where $w_t$, $t \geq 0$, are the iterates generated by the algorithm. From our analysis above, it clearly implies the assumption that the iterates of the  algorithm remain in a bounded region around $w_*$. This condition is impossible to guarantee with probability one for the classical stochastic gradient method. 

%Equation \eqref{bounded_grad_ass} makes sense only when $\sigma \to \infty$. However, the theoretical analysis must depend on $\sigma$ in this situation and would definitely affect the convergence results. Many previous results simply ignored the dependency of $\sigma$ in the convergence rate. 

%For other cases, which are not strongly convex, this bounded gradient assumption also makes the number of functions that may be useful in applications become limited, especially when people assume that the bounded constant $\sigma$ is small. To expand the choice of functions, the value of $\sigma$ should become large (or very large). Thus, it would affect negatively the convergence results of the algorithm since the rate of convergence must also depend on $\sigma$. The theoretical results may become worse when $\sigma \to \infty$. 

%Although people may suppose that there exists some compact convex set containing all iterations of SGD algorithm (see e.g. \cite{Cohen2016}), there is theoretically no guarantee that all of those iterations are still in that set. 

Recently, in the review paper \cite{bottou2016optimization}, convergence of SGD for general stochastic optimization problem was analyzed under the following assumption:  there exist constants $M$ and $N$ such that $\mathbb{E}[\| \nabla f(w_t; \xi_t) \|^2] \leq M \| \nabla F(w_t) \|^2 + N$, where $w_t$, $t \geq 0$, are generated by the algorithm. This assumption does not contradict strong convexity, however, in general, constants $M$ and $N$ are unknown, while $M$ is used to determine the learning rate $\eta_t$ \cite{bottou2016optimization}.  In addition, the rate of convergence of the SGD algorithm depends  on $M$ and $N$. In this paper we show that under the smoothness assumption on individual realizations  $f(w,\xi)$ it is possible to derive the bound $\mathbb{E}[\| \nabla f(w; \xi) \|^2] \leq M_0 [F(w) - F(w_*)] + N$  with specific values of $M_0$, and $N$ for $\forall w \in \mathbb{R}^d$, which in turn  implies the bound $\mathbb{E}[\| \nabla f(w; \xi) \|^2] \leq M \| \nabla F(w) \|^2 + N$ with specific $M$, by strong convexity of $F$. We also note that, in \cite{Bach_NIPS2011}, the convergence of SGD without bounded gradient assumption is studied. We then provide an alternative  convergence analysis for SGD which shows convergence in expectation  with a bound on learning rate  which is  larger than that in \cite{bottou2016optimization,Bach_NIPS2011} by a factor of  $L/\mu$. We then use the  new framework for the convergence analysis of SGD to analyze an asynchronous stochastic gradient method. 

%In this paper, we also provide the sufficient conditions (based on learning rate) for almost sure (w.p.1.)\footnote{With probability 1.} convergence of SGD. We provide the new framework for the convergence analysis of SGD, which may be able to use for the analysis of other stochastic gradient algorithms related to SGD without requiring bounded gradient assumption in the strongly convex case. 

In~\cite{Hogwild}, an asynchronous stochastic optimization method called Hogwild! was proposed.  Hogwild! algorithm is a parallel version of SGD, where each processor applies SGD steps independently of the other processors to the solution $w$ which is  shared by all processors. Thus, each processor computes a stochastic gradient and updates $w$ without "locking" the memory containing $w$, meaning that multiple processors are able to update $w$ at the same time.  This approach leads to much better scaling of parallel SGD algorithm than a synchoronous version, but the analysis of this method is more complex.   In ~\cite{Hogwild,ManiaPanPapailiopoulosEtAl2015,DeSaZhangOlukotunEtAl2015} various variants of   Hogwild! with a fixed step size are analyzed under the assumption that the gradients are bounded as in  (\ref{bounded_grad_ass}). In this paper, we  extend our analysis of SGD to provide analysis  of Hogwild! with diminishing step sizes and without the   assumption on bounded gradients.

In a recent technical report \cite{Leblond2018}  Hogwild! with fixed step size is analyzed without the bounded gradient assumption. We note that SGD with fixed step size only converges  to a neighborhood of the optimal solution, while by analyzing the diminishing step size variant we are able to show convergence to the \textit{optimal solution} with probability one. Both in \cite{Leblond2018}  and in this paper, the version of Hogwild! with inconsistent reads and writes is considered. 

\subsection{Contribution}

We provide a new framework for the analysis of stochastic gradient algorithms in the strongly convex case under the condition of Lipschitz continuity of the individual function realizations, but {\bf without requiring any bounds on the stochastic gradients}.
Within this framework we have the following contributions: 
\begin{itemize}
\item 
%(1) 
We prove the almost sure (w.p.1) convergence of SGD with diminishing step size. Our analysis provides a larger bound on the possible initial  step size when compared to any previous analysis of convergence in expectation for SGD. 

\item 
%(2) 
We introduce a general recurrence for vector updates which has as its special cases (a) Hogwild! algorithm with diminishing step sizes, where each update involves all non-zero entries of the computed gradient, and (b) a position-based updating algorithm where each update corresponds to only one uniformly selected non-zero entry of the computed gradient.

\item 
%(3)
We analyze this general recurrence under inconsistent vector reads from and vector writes to shared memory (where individual vector entry reads and writes are atomic in that they cannot be interrupted by writes to the same entry) assuming that there exists a delay $\tau$ such that during the $(t+1)$-th iteration a gradient of a read vector $w$ is computed which includes the aggregate of all the updates up to and including those made during the $(t-\tau)$-th iteration. In other words, $\tau$ controls to what extend past updates influence the shared memory.
\begin{itemize}
\item 
%(3a) 
Our upper bound for the expected convergence rate is sublinear, i.e., $O(1/t)$, and its precise expression allows comparison of algorithms (a) and (b) described above.
\item
%(3b)
For SGD we can improve this upper bound by a factor 2 and also show that its initial step size can be larger.

\item
%(3c)
We show that $\tau$ can be a function of $t$ as large as $\approx \sqrt{t /\ln t}$ without affecting the asymptotic behavior of the upper bound; we also determine a constant $T_0$ with the property that, for $t\geq T_0$,  higher order terms containing parameter $\tau$ are smaller than the leading $O(1/t)$ term. We give intuition explaining why the expected convergence rate is not more affected by $\tau$. Our experiments confirm our analysis.
\item 
%
%(3d)
We determine a constant $T_1$ with the property that, for $t\geq T_1$,  the higher order term containing parameter $\|w_0-w_*\|^2$ is smaller than the leading $O(1/t)$ term.
\end{itemize}
\item 
%
%(4) 
All the above contributions generalize to the non-convex setting where we do not need to assume that the component functions $f(w;\xi)$ are convex in $w$.
\end{itemize}

%2) Our analysis is better compared to 2018 framework in that we have no dependence on tau and sparsity for the leading term (in our attempt to use the 2018 framework -- since they have not done this for diminishing step size -- we do see such dependency).

%= All results hold in the not fixed sum case

\subsection{Organization}

We analyse the convergence rate of SGD  in Section \ref{analysis} and introduce the general recursion and its analysis in Section \ref{general}. Experiments are reported in Section \ref{sec_experiments}.

\section{New Framework for Convergence Analysis of SGD}
\label{analysis}

We introduce SGD algorithm in Algorithm \ref{sgd_algorithm}. 
%\begin{algorithm}[h!]
%   \caption{Stochastic Gradient Descent (SGD) Method}
%   \label{sgd_algorithm}
%\begin{algorithmic}
%   \STATE {\bfseries Initialize:} $w_0$
%   \STATE {\bfseries Iterate:}
%   \FOR{$t=0,1,2,\dots$}
%   \STATE Choose a stepsize $\eta_t > 0$. 
%   \STATE Generate $i_t$ uniformly at random from $\{1,\dots,n\}$. 
%   \STATE $w_{t+1} = w_{t} - \eta_t \nabla f_{i_t}(w_{t})$. 
%   \ENDFOR
%\end{algorithmic}
%\end{algorithm}
\begin{algorithm}[h]
   \caption{Stochastic Gradient Descent (SGD) Method}
   \label{sgd_algorithm}
\begin{algorithmic}
   \STATE {\bfseries Initialize:} $w_0$
   \STATE {\bfseries Iterate:}
   \FOR{$t=0,1,2,\dots$}
  \STATE Choose a step size (i.e., learning rate) $\eta_t>0$. 
  \STATE Generate a random variable $\xi_t$.
  \STATE Compute a stochastic gradient $\nabla f(w_{t};\xi_{t}).$
   \STATE Update the new iterate $w_{t+1} = w_{t} - \eta_t \nabla f(w_{t};\xi_{t})$.
   \ENDFOR
\end{algorithmic}
\end{algorithm}
%
%Note that $\mathcal{F}_{t} = \sigma(w_{0},\xi_{0},\dots,\xi_{t-1})$ is the $\sigma$-algebra generated by $w_{0},\xi_{0},\dots,\xi_{t-1}$, i.e., $\mathcal{F}_{t}$ contains all the information of $w_{0},\dots,w_{t}$. 

The sequence of random variables $\{\xi_t\}_{t \geq 0}$ is assumed to  be i.i.d.\footnote{Independent and identically distributed.}
%$w \in \mathbb{R}^d$, $$\mathbb{E}[\nabla f(w;\xi_{i})] = \nabla F(w).$$ 
Let us introduce our key assumption that each realization $\nabla f(w;\xi)$ is an $L$-smooth function. 

%\begin{ass}[$L$-smooth]
%\label{ass_smooth}
%Each $f_i: \mathbb{R}^d \to \mathbb{R}$, $i \in \{1,\dots,n\}$, is $L$-smooth, i.e., there exists a constant $L > 0$ such that
%\begin{align*}
%\| \nabla f_i(w) - \nabla f_i(w') \| \leq L \| w - w' \|, \ \forall w,w' \in \mathbb{R}^d. \tagthis\label{eq:Lsmooth_basic}
%\end{align*} 
%\end{ass}

\begin{ass}[$L$-smooth]
\label{ass_smooth}
$f(w;\xi)$ is $L$-smooth for every realization of $\xi$, i.e., there exists a constant $L > 0$ such that, $\forall w,w' \in \mathbb{R}^d$, 
\begin{align*}
\| \nabla f(w;\xi) - \nabla f(w';\xi) \| \leq L \| w - w' \|. \tagthis\label{eq:Lsmooth_basic}
\end{align*} 
\end{ass}

Assumption \ref{ass_smooth} implies that $F$ is also $L$-smooth. Then, by the property of $L$-smooth function (in \cite{nesterov2004}), we have, $\forall w, w' \in \mathbb{R}^d$, 
\begin{align*}
F(w) &\leq F(w') + \langle \nabla F(w'),(w-w') \rangle + \frac{L}{2}\|w-w'\|^2. \tagthis\label{eq:Lsmooth}
\end{align*}

The following additional convexity assumption can be made, as it holds for many problems arising in machine learning.
%\begin{ass}\label{ass_convex}
%Each function $f_i: \mathbb{R}^d \to \mathbb{R}$, $i \in \setn$, is convex, i.e.,
%\begin{gather*}
%f_i(w)  - f_i(w') \geq \langle \nabla f_i(w'),(w - w') \rangle, \quad \forall i\in\setn. 
%\end{gather*}
%\end{ass}

\begin{ass}\label{ass_convex}
$f(w;\xi)$ is convex for every realization of $\xi$, i.e., $\forall w,w' \in \mathbb{R}^d$, 
\begin{gather*}
f(w;\xi)  - f(w';\xi) \geq \langle \nabla f(w';\xi),(w - w') \rangle.
\end{gather*}
\end{ass}

We first derive our analysis under Assumptions  \ref{ass_smooth}, and \ref{ass_convex} and then we derive weaker results under only
Assumption  \ref{ass_smooth}. 

\subsection{Convergence With Probability One}

As discussed in the introduction, under Assumptions \ref{ass_smooth} and \ref{ass_convex} we can now derive a bound on $\mathbb{E}\|\nabla f(w; \xi)\|^2$. 

%\textcolor{blue}{Lam: We need Assumption 3 for the proof of Lemma 3 in Appendix, which needs to be used for Lemma 1. Theorems 1 + 2 (SGD) and 3 (Hogwild!) require to use Lemma 1. If we remove Assumption 3, we would have the result in Lemma 2, which is a little worse by $\kappa$ -- which is in Section 4. Probably, in that section, we do not need to state the whole theorem (Theorems 4 and 5), but find the way to write it shorter like in Section 3.2 for Hogwild! (in the last (small) paragraph of Page 6)}. 
%
\begin{lem}\label{lem_bounded_secondmoment_04}
Let Assumptions \ref{ass_smooth} and \ref{ass_convex} hold. Then, for $\forall w \in \mathbb{R}^d$, 
\begin{gather}
\mathbb{E}[\|\nabla f(w; \xi)\|^2] \leq  4 L [ F(w) - F(w_{*}) ] + N,
\label{afsfawfwaefwea} 
\end{gather}
where $N = 2 \mathbb{E}[ \|\nabla f(w_{*}; \xi)\|^2 ]$; $\xi$ is a random variable, and $w_{*} = \arg \min_w F(w)$. 
\end{lem}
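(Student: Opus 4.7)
The plan is to exploit the classical fact that for a convex $L$-smooth function $g$, the gradient satisfies the bound $\|\nabla g(w) - \nabla g(w')\|^2 \leq 2L\bigl[g(w) - g(w') - \langle \nabla g(w'), w-w'\rangle\bigr]$ for all $w,w'$. This inequality (sometimes phrased as co-coercivity) follows from minimizing the quadratic upper bound supplied by $L$-smoothness and invoking convexity; under Assumptions \ref{ass_smooth} and \ref{ass_convex}, it applies realization-wise to $g(\cdot) = f(\cdot;\xi)$.

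\medskip

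\textbf{Step 1 (realization-wise inequality).} Apply the above bound with $g=f(\cdot;\xi)$ at the points $w$ and $w_*$ to obtain, for every fixed $\xi$,
\begin{equation*}
\|\nabla f(w;\xi) - \nabla f(w_*;\xi)\|^2 \leq 2L\bigl[f(w;\xi) - f(w_*;\xi) - \langle \nabla f(w_*;\xi), w-w_*\rangle\bigr].
\end{equation*}

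\textbf{Step 2 (split the squared norm).} Use the elementary inequality $\|a\|^2 \leq 2\|a-b\|^2 + 2\|b\|^2$ with $a = \nabla f(w;\xi)$, $b = \nabla f(w_*;\xi)$, to conclude
\begin{equation*}
\|\nabla f(w;\xi)\|^2 \leq 4L\bigl[f(w;\xi) - f(w_*;\xi) - \langle \nabla f(w_*;\xi), w-w_*\rangle\bigr] + 2\|\nabla f(w_*;\xi)\|^2.
\end{equation*}

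\medskip

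\textbf{Step 3 (take expectation in $\xi$).} Since $\mathbb{E}[f(w;\xi)] = F(w)$, $\mathbb{E}[f(w_*;\xi)] = F(w_*)$, and $\mathbb{E}[\nabla f(w_*;\xi)] = \nabla F(w_*) = 0$, the cross term $\mathbb{E}[\langle \nabla f(w_*;\xi), w-w_*\rangle]$ vanishes (note $w$ is fixed, not random). Therefore
\begin{equation*}
\mathbb{E}[\|\nabla f(w;\xi)\|^2] \leq 4L[F(w) - F(w_*)] + 2\mathbb{E}[\|\nabla f(w_*;\xi)\|^2],
\end{equation*}
which is exactly \eqref{afsfawfwaefwea} with $N = 2\mathbb{E}[\|\nabla f(w_*;\xi)\|^2]$.

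\medskip

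The only nontrivial ingredient is the co-coercivity-style bound in Step 1; everything else is the $\|a\|^2 \le 2\|a-b\|^2 + 2\|b\|^2$ expansion and linearity of expectation combined with the optimality condition $\nabla F(w_*) = 0$. I expect no real obstacle, though one should be careful that Step 1 genuinely needs both smoothness and convexity of the individual realizations, which is precisely why the lemma is stated under Assumptions \ref{ass_smooth} and \ref{ass_convex}.
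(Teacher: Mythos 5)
Your proposal is correct and follows essentially the same route as the paper: the same splitting $\|a\|^2 \leq 2\|a-b\|^2 + 2\|b\|^2$ with $a=\nabla f(w;\xi)$, $b=\nabla f(w_*;\xi)$, combined with the bound $\mathbb{E}[\|\nabla f(w;\xi)-\nabla f(w_*;\xi)\|^2]\leq 2L[F(w)-F(w_*)]$. The only difference is that the paper invokes this last bound as a cited lemma (a generalization of the SVRG result), whereas you prove it inline via realization-wise co-coercivity and the vanishing of the cross term at $w_*$, which is exactly the standard proof of that cited lemma.
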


%By using \eqref{eq:stronglyconvex}, we then easily have the following corollary. 
%\begin{cor}\label{lem_bounded_secondmoment_03}
%Let Assumptions \ref{ass_stronglyconvex}, \ref{ass_convex} and \ref{ass_smooth} hold. Then, $\forall w \in \mathbb{R}^d$,
%\begin{gather*}
%\mathbb{E}\|\nabla f_i(w)\|^2 \leq  M \| \nabla F(w) \|^2 + N, 
%\end{gather*}
%where 
%\begin{align*}
%M = \frac{2 L}{\mu} \ \text{and} \ N = 2 \left( \frac{1}{n} \sum_{i=1}^{n} \|\nabla f_i(w_{*})\|^2 \right). \tagthis \label{eq_m_and_n}
%\end{align*}
%\end{cor}

%\begin{proof}
%By Lemma \ref{lem_bounded_secondmoment_04}, for $\forall w \in \mathbb{R}^d$, we have
%\begin{gather*}
%\mathbb{E}\|\nabla f_i(w)\|^2 \leq  4 L [ F(w) - F(w_{*}) ] + N \overset{\eqref{eq:stronglyconvex}}{\leq} \frac{2L}{\mu} \| \nabla F(w) \|^2 + N. 
%\end{gather*}
%\end{proof}

%Lemma \ref{lem_bounded_secondmoment_04} provides specific values of $M$ and $N$, which helps us not to make any assumption on the bound of $\mathbb{E}\|\nabla f_i(w)\|^2$. 

Using Lemma \ref{lem_bounded_secondmoment_04} and Super Martingale Convergence Theorem \cite{BertsekasSurvey} (Lemma \ref{prop_supermartingale} in the supplementary material), we can provide the sufficient condition for almost sure convergence of Algorithm \ref{sgd_algorithm} in the strongly convex case without assuming any bounded gradients. 

%\begin{thm}[Sufficient condition for almost sure convergence]\label{thm_general_02}
%Let Assumptions \ref{ass_stronglyconvex}, \ref{ass_convex} and \ref{ass_smooth} hold. Consider Algorithm \ref{sgd_algorithm} with a stepsize sequence such that
%\begin{align*}
%0 < \eta_t < \frac{\mu}{L^2} \ , \ \sum_{t=0}^{\infty} \eta_t = \infty \ \text{and} \ \sum_{t=0}^{\infty} \eta_t^2 < \infty. 
%\end{align*}
%Then, the following holds w.p.1 (almost surely)
%\begin{align*}
%F(w_{t}) - F(w_{*}) \to 0 \ \text{which implies} \ \| w_{t} - w_{*} \| \to 0. 
%\end{align*}
%
%%Moreover, there exists a compact set containing all iterations $w_{0},\dots,w_{t}$ w.p.1. 
%\end{thm}

\begin{thm}[Sufficient conditions for almost sure convergence]\label{thm_general_02_new_02}
Let Assumptions \ref{ass_stronglyconvex}, \ref{ass_smooth} and \ref{ass_convex} hold. Consider Algorithm \ref{sgd_algorithm} with a stepsize sequence such that
\begin{align*}
0 < \eta_t \leq \frac{1}{2 L} \ , \ \sum_{t=0}^{\infty} \eta_t = \infty \ \text{and} \ \sum_{t=0}^{\infty} \eta_t^2 < \infty. 
\end{align*}
Then, the following holds w.p.1 (almost surely)
\begin{align*}
\| w_{t} - w_{*} \|^2 \to 0. 
\end{align*}

%Moreover, there exists a compact set containing all iterations $w_{0},\dots,w_{t}$ w.p.1. 
\end{thm}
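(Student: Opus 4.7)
The plan is to cast the squared-distance sequence $\{\|w_t - w_*\|^2\}$ into the Robbins--Siegmund supermartingale framework and then use strong convexity together with $\sum \eta_t = \infty$ to sharpen the resulting a.s.\ convergence from boundedness of a certain series to $\|w_t-w_*\|^2 \to 0$.

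First I would expand the SGD update. Letting $\mathcal{F}_t$ denote the $\sigma$-algebra generated by $w_0,\xi_0,\dots,\xi_{t-1}$, the update $w_{t+1}=w_t-\eta_t\nabla f(w_t;\xi_t)$ yields
\begin{align*}
\|w_{t+1}-w_*\|^2 ={}& \|w_t-w_*\|^2 - 2\eta_t\langle \nabla f(w_t;\xi_t),w_t-w_*\rangle \\
& + \eta_t^2\|\nabla f(w_t;\xi_t)\|^2 .
\end{align*}
Conditioning on $\mathcal{F}_t$ and using $\mathbb{E}[\nabla f(w_t;\xi_t)\mid\mathcal{F}_t]=\nabla F(w_t)$, convexity of $F$ (which gives $\langle\nabla F(w_t),w_t-w_*\rangle\ge F(w_t)-F(w_*)$), and Lemma \ref{lem_bounded_secondmoment_04} applied conditionally at $w=w_t$, I obtain
\begin{align*}
\mathbb{E}[\|w_{t+1}-w_*\|^2\mid\mathcal{F}_t] \le{}& \|w_t-w_*\|^2 + N\eta_t^2 \\
& - 2\eta_t(1-2L\eta_t)[F(w_t)-F(w_*)].
\end{align*}
The step size condition $\eta_t\le 1/(2L)$ makes $1-2L\eta_t\ge 0$, so the last term is non-positive and we are in the Robbins--Siegmund setting with $X_t=\|w_t-w_*\|^2$, $Y_t=N\eta_t^2$, and $Z_t=2\eta_t(1-2L\eta_t)[F(w_t)-F(w_*)]\ge 0$.

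Next, since $\sum\eta_t^2<\infty$, applying the Supermartingale Convergence Theorem (Lemma \ref{prop_supermartingale} in the supplement) gives, almost surely, (i) $\|w_t-w_*\|^2$ converges to a finite limit, and (ii) $\sum_{t=0}^{\infty} \eta_t(1-2L\eta_t)[F(w_t)-F(w_*)] < \infty$. To upgrade (ii) into the desired conclusion, I would invoke strong convexity in the form $F(w_t)-F(w_*)\ge \tfrac{\mu}{2}\|w_t-w_*\|^2$. Moreover, $\sum\eta_t^2<\infty$ forces $\eta_t\to 0$, so for all $t$ past some (random) index we have $\eta_t\le 1/(4L)$ and hence $1-2L\eta_t\ge 1/2$. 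Combining these, the summability in (ii) yields $\sum_t \eta_t\|w_t-w_*\|^2<\infty$ almost surely.

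Finally, since $\sum\eta_t=\infty$, this summability forces $\liminf_{t\to\infty}\|w_t-w_*\|^2=0$ almost surely; combined with (i) (the whole sequence converges a.s.), the limit must be $0$. The main subtle point I expect to have to handle carefully is that the boundary case $\eta_t=1/(2L)$ makes the coefficient $1-2L\eta_t$ vanish, so the supermartingale inequality alone does not immediately drive $F(w_t)\to F(w_*)$; that is precisely where the hypothesis $\sum\eta_t^2<\infty$ saves us by forcing $\eta_t\to 0$, allowing the strong-convexity/liminf argument to close the proof.
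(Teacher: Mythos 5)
Your proposal is correct and follows essentially the same route as the paper: the one-step recursion obtained from Lemma \ref{lem_bounded_secondmoment_04}, the supermartingale convergence theorem (Lemma \ref{prop_supermartingale}), and $\sum_t \eta_t = \infty$ to force the almost-sure limit of $\|w_t - w_*\|^2$ to be zero. The only difference is minor bookkeeping: the paper keeps the strong-convexity term $\mu\eta_t\|w_t-w_*\|^2$ from \eqref{eq:stronglyconvex_00} inside the recursion and uses it directly as the summable decrement $Z_t$ (so the boundary case $\eta_t = \frac{1}{2L}$ never matters and no $\eta_t \to 0$ tail argument is needed), whereas you retain the gap term $2\eta_t(1-2L\eta_t)[F(w_t)-F(w_*)]$ and convert back to squared distances via strong convexity at the end, which is equally valid.
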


Note that the classical SGD proposed in \cite{RM1951} has learning rate satisfying conditions
\begin{align*}
\sum_{t=0}^{\infty} \eta_t = \infty \ \text{and} \ \sum_{t=0}^{\infty} \eta_t^2 < \infty
\end{align*}
However, the original analysis is performed under the bounded gradient assumption, as in \eqref{bounded_grad_ass}. 
In Theorem \ref{thm_general_02_new_02}, on the other hand, we do not use this assumption, but instead assume Lipschitz smoothness 
and convexity of the function realizations, which does not contradict the strong convexity of $F(w)$.

The following result establishes a sublinear convergence rate of SGD.   
%\begin{thm}\label{thm_res_sublinear_new_02}
%Let Assumptions \ref{ass_stronglyconvex}, \ref{ass_smooth} and \ref{ass_convex} hold. Consider Algorithm \ref{sgd_algorithm} with a step size sequence such that
%\begin{gather*}
%\eta_t = \frac{\alpha}{2 \alpha L + t} \leq \frac{1}{2 L} \ \text{for some } \alpha > \frac{1}{\mu}. 
%\end{gather*}
%Then, 
%\begin{gather}\label{sublinear_res_01_new_02}
%\mathbb{E}[\|w_{t} - w_{*}\|^2] \leq \frac{G}{2 \alpha L + t},
%\end{gather} 
%where $G = \max\{I,J\}$, and
%\begin{align*}
%I &= (2 \alpha L + 1) \left[ \left(1 - \frac{\mu}{2 L} \right)\| w_{0} - w_{*} \|^2 + \frac{N}{4 L^2}  \right]>0, \\
%J &= \frac{\alpha^2 N}{\alpha\mu - 1}>0, \\
%N &= 2 \mathbb{E}[ \|\nabla f(w_{*}; \xi_0)\|^2 ]. 
%\end{align*}
%%(Note that $I > 0$, $J > 0$.) 
%\end{thm}

\begin{thm}\label{thm_res_sublinear_new_02}
Let Assumptions \ref{ass_stronglyconvex}, \ref{ass_smooth} and \ref{ass_convex} hold. Let $E = \frac{2\alpha L}{\mu}$ with $\alpha=2$. Consider Algorithm \ref{sgd_algorithm} with a stepsize sequence such that $\eta_t = \frac{\alpha}{\mu(t+E)} \leq \eta_0=\frac{1}{2L}$. The expectation $\mathbb{E}[\|w_{t} - w_{*}\|^2]$ is at most
$$ \frac{4\alpha^2 N}{\mu^2} 
\frac{1 }{(t-T+E)} $$
for 
$$t\geq T =\frac{4L}{\mu}\max \{ \frac{L\mu}{N} \|w_{0} - w_{*}\|^2, 1\} - \frac{4L}{\mu}.$$ 
\end{thm}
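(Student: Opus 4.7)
The plan is to derive a one-step contraction for $Y_t \eqdef \mathbb{E}\|w_t - w_*\|^2$ and then solve the resulting scalar recurrence by induction, with the shift $T$ chosen so that the initial condition is absorbed.

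First, I would expand $\|w_{t+1}-w_*\|^2 = \|w_t-w_*\|^2 - 2\eta_t \langle \nabla f(w_t;\xi_t), w_t-w_*\rangle + \eta_t^2 \|\nabla f(w_t;\xi_t)\|^2$ and take conditional expectation with respect to $\xi_t$. Strong convexity (Assumption \ref{ass_stronglyconvex}) gives $\langle \nabla F(w_t), w_t-w_*\rangle \geq [F(w_t)-F(w_*)] + \tfrac{\mu}{2}\|w_t-w_*\|^2$, while Lemma \ref{lem_bounded_secondmoment_04} gives $\mathbb{E}[\|\nabla f(w_t;\xi_t)\|^2\mid w_t] \leq 4L[F(w_t)-F(w_*)] + N$. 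Combining, the coefficient of $[F(w_t)-F(w_*)]$ becomes $2\eta_t(2L\eta_t - 1)$, which is non-positive by $\eta_t \leq \eta_0 = 1/(2L)$, so the $[F(w_t)-F(w_*)]$ term can be discarded. Taking total expectation yields the key recursion
\begin{equation*}
Y_{t+1} \leq (1-\eta_t\mu)\,Y_t + \eta_t^2 N. \tag{$\star$}
\end{equation*}

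Substituting $\eta_t = \alpha/[\mu(t+E)]$, this becomes $Y_{t+1} \leq (1 - \alpha/(t+E))\,Y_t + \alpha^2 N/[\mu^2(t+E)^2]$, and I would prove the target bound in two stages. Stage (a): by a direct induction on $(\star)$, show that $Y_t \leq C^\ast/(t+E)$ for all $t\geq 0$, where $C^\ast = \max\{E\|w_0-w_*\|^2,\ \alpha^2 N/[(\alpha-1)\mu^2]\}$; the induction step works because the choice of $C^\ast$ makes the perturbation $\alpha^2 N/\mu^2$ fit inside the slack produced by $(1 - \alpha/(t+E))$. Evaluating at $t=T$ and using the definition $T+E = E\max\{L\mu\|w_0-w_*\|^2/N,\ 1\}$, this specialises (in both branches of the max) to $Y_T \leq 4\alpha^2 N/[\mu^2 E]$, i.e.\ the target inequality holds at $t=T$. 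Stage (b): use this as the base case for a second induction against $(\star)$ with hypothesis $Y_t \leq 4\alpha^2 N/[\mu^2 (t-T+E)]$; the induction step is a direct algebraic verification once the recursion is written in the shifted variable $s = t - T$.

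The main obstacle will be checking the induction step in stage (b) carefully: because the contraction factor $1 - \alpha/(t+E)$ depends on the \emph{unshifted} index $t+E$ while the target denominator uses $t-T+E$, the algebraic inequality to close the induction does not reduce to a textbook form, and the factor $4$ in the numerator $4\alpha^2 N/\mu^2$ is precisely the slack needed to absorb this mismatch for the full range $t\geq T$. Getting the book-keeping between $T$, $E$, and $\alpha=2$ right — and verifying that stage (a) delivers enough headroom at $t=T$ to start stage (b) — is the only nontrivial piece; once both inductions are set up, the conclusion is immediate.
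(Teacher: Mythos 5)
Your stage 1 and stage (a) are exactly the paper's argument (the recurrence $Y_{t+1}\le(1-\mu\eta_t)Y_t+\eta_t^2N$ and the induction giving $Y_t\le \max\bigl\{E\|w_0-w_*\|^2,\ \tfrac{\alpha^2N}{(\alpha-1)\mu^2}\bigr\}\frac{1}{t+E}$, evaluated at $t=T$), and they are fine. The gap is stage (b), and it is not just bookkeeping: the induction you propose cannot be closed. Write $s=t-T+E$ and $C=\tfrac{4\alpha^2N}{\mu^2}$; since the true step size is $\eta_t=\tfrac{\alpha}{\mu(t+E)}$ with $t+E=s+T$, the step you need is $\bigl(1-\tfrac{\alpha}{s+T}\bigr)\tfrac{C}{s}+\tfrac{\alpha^2N}{\mu^2(s+T)^2}\le\tfrac{C}{s+1}$, equivalently $\tfrac{\alpha^2N}{\mu^2(s+T)^2}\le \tfrac{C}{s}\cdot\tfrac{\alpha(s+1)-(s+T)}{(s+T)(s+1)}$. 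For $\alpha=2$ the right-hand numerator is $s+2-T$, which is negative whenever $T>s+2$; at the very first step $s=E$, so the induction already fails whenever $T>E+2$, i.e.\ precisely in the regime $\|w_0-w_*\|^2\gg N/(L\mu)$ where Theorem \ref{thm_res_sublinear_new_02} says more than stage (a). No constant in place of $4$ repairs a sign failure. Worse, the conclusion does not even follow from the two facts stage (b) retains: if $Y_T$ sits at its stage-(a) cap ($\approx \alpha^2N/(\mu^2E)$) and you iterate the recursion with equality, the contraction factor $1-\alpha/(t+E)$ is essentially $1$ for $t-T\ll T$, so $Y_{T+s}$ stays near $\alpha^2N/(\mu^2E)$ while your target $4\alpha^2N/(\mu^2(s+E))$ drops below it once $s$ is a modest multiple of $E$. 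So any restart argument must carry more information about the trajectory than the single value at $t=T$.

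The paper's own last step is different in kind: it re-applies the stage-(a) induction ``with $w_T$ as starting point,'' which implicitly resets the clock inside the step size (pretending the steps after $T$ are $\tfrac{\alpha}{\mu(t-T+E)}$ rather than the actual, smaller $\tfrac{\alpha}{\mu(t+E)}$); your more literal bookkeeping exposes that this resetting is exactly what makes the induction close, and that it needs justification, since the stage-(a) induction does not survive replacing $\eta_t$ by a smaller step. A clean way to repair your proof (and to make the statement airtight) is to unroll the recurrence from $t=0$ as in Lemma \ref{lemma:hogwild_recursive_form}: with $\alpha=2$ this gives $\mathbb{E}\|w_t-w_*\|^2\le \bigl[(E-2)(E-1)\|w_0-w_*\|^2+\tfrac{4N}{\mu^2}t\bigr]/[(t+E-2)(t+E-1)]$. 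By the definition of $T$ one has $(E-2)(E-1)\|w_0-w_*\|^2\le\tfrac{4N}{\mu^2}(T+E)$ in both branches of the max, and then the claimed bound follows from $(t+T+E)(t-T+E)=(t+E)^2-T^2\le(t+E)^2\le 4(t+E-2)(t+E-1)$, the last inequality using $t+E\ge E=4L/\mu\ge 4$. This keeps the true step sizes throughout and avoids the restart altogether.
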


\subsection{Convergence Analysis without Convexity}
%of SGD without Using Assumption~\ref{ass_convex}}

In this section, we provide the analysis of Algorithm \ref{sgd_algorithm} without using Assumption \ref{ass_convex}, that is, $f(w;\xi)$ is not necessarily convex. We  still do not need to impose the bounded stochastic gradient assumption, since we can derive an analogue of Lemma \ref{lem_bounded_secondmoment_04}, albeit with worse constant in the bound. 
\begin{lem}\label{lem_bounded_secondmoment_04_new}
Let Assumptions \ref{ass_stronglyconvex} and \ref{ass_smooth} hold. Then, for $\forall w \in \mathbb{R}^d$, 
\begin{gather}
\mathbb{E}[\|\nabla f(w; \xi)\|^2] \leq  4L \kappa [ F(w) - F(w_{*}) ] + N,
\label{afsfawfwaefwea_new} 
\end{gather}
where $\kappa = \frac{L}{\mu}$ and $N = 2 \mathbb{E}[ \|\nabla f(w_{*}; \xi)\|^2 ]$; $\xi$ is a random variable, and $w_{*} = \arg \min_w F(w)$.
\end{lem}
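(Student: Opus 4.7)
The plan is to mimic the proof of Lemma \ref{lem_bounded_secondmoment_04}, but replace the convexity-based bound on $\|\nabla f(w;\xi) - \nabla f(w_*;\xi)\|^2$ (which ordinarily yields a linear bound in $f(w;\xi)-f(w_*;\xi)-\langle \nabla f(w_*;\xi),w-w_*\rangle$ through the co-coercivity of $L$-smooth convex functions) by a purely $L$-Lipschitz bound on the gradient, and then pay a factor of the condition number $\kappa=L/\mu$ when converting $\|w-w_*\|^2$ into a function-value gap via strong convexity.

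Concretely, first I would write $\nabla f(w;\xi) = [\nabla f(w;\xi)-\nabla f(w_*;\xi)] + \nabla f(w_*;\xi)$ and apply the elementary inequality $\|a+b\|^2 \le 2\|a\|^2 + 2\|b\|^2$ to obtain
\begin{align*}
\|\nabla f(w;\xi)\|^2 &\le 2\|\nabla f(w;\xi)-\nabla f(w_*;\xi)\|^2 \\
&\quad + 2\|\nabla f(w_*;\xi)\|^2.
\end{align*}
Assumption \ref{ass_smooth} applied to the first term gives $\|\nabla f(w;\xi)-\nabla f(w_*;\xi)\|^2 \le L^2\|w-w_*\|^2$. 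Taking expectations over $\xi$ yields
\begin{align*}
\mathbb{E}[\|\nabla f(w;\xi)\|^2] \le 2L^2\|w-w_*\|^2 + N,
\end{align*}
with $N = 2\mathbb{E}[\|\nabla f(w_*;\xi)\|^2]$ as defined in the statement.

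The last step is to convert $\|w-w_*\|^2$ into $F(w)-F(w_*)$. Since $\nabla F(w_*)=0$, Assumption \ref{ass_stronglyconvex} (applied with $w'=w_*$) gives $F(w)-F(w_*) \ge \frac{\mu}{2}\|w-w_*\|^2$, i.e. $\|w-w_*\|^2 \le \frac{2}{\mu}[F(w)-F(w_*)]$. Substituting into the previous display,
\begin{align*}
\mathbb{E}[\|\nabla f(w;\xi)\|^2] &\le \frac{4L^2}{\mu}[F(w)-F(w_*)] + N \\
&= 4L\kappa[F(w)-F(w_*)] + N,
\end{align*}
which is exactly \eqref{afsfawfwaefwea_new}.

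There is no serious obstacle here; the only subtle point is understanding \emph{why} the constant degrades from $4L$ (in Lemma \ref{lem_bounded_secondmoment_04}) to $4L\kappa$. Without Assumption \ref{ass_convex}, we lose the co-coercivity identity that directly relates $\|\nabla f(w;\xi)-\nabla f(w_*;\xi)\|^2$ to the Bregman divergence of $f(\cdot;\xi)$; we must instead route the bound through the iterate distance $\|w-w_*\|^2$ and pay the $1/\mu$ factor of strong convexity, combined with the $L$ from smoothness, producing the extra $\kappa$.
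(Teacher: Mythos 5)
Your proposal is correct and follows essentially the same route as the paper: both apply the elementary inequality $\|a\|^2 \le 2\|a-b\|^2 + 2\|b\|^2$ with $a = \nabla f(w;\xi)$, $b = \nabla f(w_*;\xi)$, bound the difference term by $L^2\|w-w_*\|^2$ via Assumption \ref{ass_smooth}, and then use strong convexity at $w_*$ (where $\nabla F(w_*)=0$) to convert $\|w-w_*\|^2$ into $\tfrac{2}{\mu}[F(w)-F(w_*)]$, yielding the constant $4L\kappa$. Your closing remark about why the constant degrades from $4L$ to $4L\kappa$ without Assumption \ref{ass_convex} matches the paper's intent exactly.
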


%By using \eqref{eq:stronglyconvex}, we then easily have the following corollary. 
%\begin{cor}\label{lem_bounded_secondmoment_03_new}
%Let Assumptions \ref{ass_stronglyconvex} and \ref{ass_smooth} hold. Then, $\forall w \in \mathbb{R}^d$,
%\begin{gather*}
%\mathbb{E}\|\nabla f_i(w)\|^2 \leq  M_2 \| \nabla F(w) \|^2 + N, 
%\end{gather*}
%where 
%\begin{align*}
%M_2 = \frac{2 L^2}{\mu^2} \ \text{and} \ N = 2 \left( \frac{1}{n} \sum_{i=1}^{n} \|\nabla f_i(w_{*})\|^2 \right). \tagthis \label{eq_m_and_n_new}
%\end{align*}
%\end{cor}

Based on the proofs of Theorems \ref{thm_general_02_new_02} and \ref{thm_res_sublinear_new_02}, we can easily have the following two results (Theorems \ref{thm_general_02_new_03} and  \ref{thm_res_sublinear_new_03}). 

%\begin{thm}[Sufficient condition for almost sure convergence]\label{thm_general_02_new}
%Let Assumptions \ref{ass_stronglyconvex} and \ref{ass_smooth} hold. Consider Algorithm \ref{sgd_algorithm} with a stepsize sequence such that
%\begin{align*}
%0 < \eta_t < \frac{\mu^2}{L^3} \ , \ \sum_{t=0}^{\infty} \eta_t = \infty \ \text{and} \ \sum_{t=0}^{\infty} \eta_t^2 < \infty. 
%\end{align*}
%Then, the following holds w.p.1 (almost surely)
%\begin{align*}
%F(w_{t}) - F(w_{*}) \to 0 \ \text{which implies} \ \| w_{t} - w_{*} \| \to 0. 
%\end{align*}
%
%%Moreover, there exists a compact set containing all iterations $w_{0},\dots,w_{t}$ w.p.1. 
%\end{thm}
%
%\begin{thm}\label{thm_res_sublinear_new}
%Let Assumptions \ref{ass_stronglyconvex} and \ref{ass_smooth} hold. Consider Algorithm \ref{sgd_algorithm} with a stepsize sequence such that
%\begin{gather*}
%\eta_t = \frac{\alpha}{\alpha L M_2 + t} \ \text{for some } \alpha > \frac{1}{\mu}. 
%\end{gather*}
%Then, 
%\begin{gather}\label{sublinear_res_01_new}
%\mathbb{E}[F(w_{t}) - F(w_{*})] \leq \frac{G}{\alpha L M_2 + t},
%\end{gather} 
%where $G = \max\{I,J\}$, and
%\begin{align*}
%I &= (\alpha L M_2 + 1) \Big[ \left(1 - \frac{\mu}{L M_2} \right)[F(w_{0}) - F(w_{*})] \\ & \qquad \qquad \qquad \qquad \qquad + \frac{N}{2 L M_2^2}  \Big], \\
%J &= \frac{\alpha^2 L N}{2(\alpha\mu - 1)}. 
%\end{align*}
%(Note that $I > 0$, $J > 0$.) And 
%\begin{align*}
%M_2 = \frac{2 L^2}{\mu^2} \ \text{and} \ N = 2 \left( \frac{1}{n} \sum_{i=1}^{n} \|\nabla f_i(w_{*})\|^2 \right).
%\end{align*}
%\end{thm}

\begin{thm}[Sufficient conditions for almost sure convergence]\label{thm_general_02_new_03}
 Let Assumptions \ref{ass_stronglyconvex} and \ref{ass_smooth} hold. Then, we can conclude the statement of Theorem \ref{thm_general_02_new_02} with the definition of the step size replaced by $0 < \eta_t \leq \frac{1}{2L \kappa}$ with $\kappa = \frac{L}{\mu}$.
% 
% Consider Algorithm~\ref{sgd_algorithm} with a stepsize sequence such that
%\begin{align*}
%0 < \eta_t \leq \frac{1}{2L \kappa} \ , \ \sum_{t=0}^{\infty} \eta_t = \infty \ \text{and} \ \sum_{t=0}^{\infty} \eta_t^2 < \infty,  
%\end{align*}
%where $\kappa = \frac{L}{\mu}$. Then, the following holds w.p.1 (almost surely)
%\begin{align*}
%\| w_{t} - w_{*} \|^2 \to 0. 
%\end{align*}
%%Moreover, there exists a compact set containing all iterations $w_{0},\dots,w_{t}$ w.p.1. 
\end{thm}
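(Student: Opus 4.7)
The plan is to mirror the proof of Theorem \ref{thm_general_02_new_02} almost verbatim, replacing the single application of Lemma \ref{lem_bounded_secondmoment_04} by Lemma \ref{lem_bounded_secondmoment_04_new}. The only quantitative difference between these two second-moment bounds is the constant in front of $F(w) - F(w_*)$: it is $4L$ in the convex case and $4L\kappa$ in the general case. This factor of $\kappa$ is precisely what propagates into the step-size restriction, changing the upper bound from $\frac{1}{2L}$ to $\frac{1}{2L\kappa}$.

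Concretely, starting from the SGD recursion $w_{t+1} = w_t - \eta_t \nabla f(w_t;\xi_t)$, I would square and take conditional expectation with respect to the natural filtration $\mathcal{F}_t$ generated by $\{\xi_0,\dots,\xi_{t-1}\}$:
\begin{align*}
\mathbb{E}[\|w_{t+1}-w_*\|^2 \mid \mathcal{F}_t] = \|w_t-w_*\|^2 - 2\eta_t \langle \nabla F(w_t), w_t - w_*\rangle + \eta_t^2 \,\mathbb{E}[\|\nabla f(w_t;\xi_t)\|^2 \mid \mathcal{F}_t].
\end{align*}
By $\mu$-strong convexity of $F$ (Assumption \ref{ass_stronglyconvex}), the inner-product term is bounded below by $F(w_t)-F(w_*) + \frac{\mu}{2}\|w_t-w_*\|^2$; by Lemma \ref{lem_bounded_secondmoment_04_new}, the second-moment term is bounded above by $4L\kappa[F(w_t)-F(w_*)] + N$. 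Combining these yields
\begin{align*}
\mathbb{E}[\|w_{t+1}-w_*\|^2 \mid \mathcal{F}_t] \leq (1-\mu\eta_t)\|w_t-w_*\|^2 - 2\eta_t(1 - 2L\kappa\,\eta_t)[F(w_t)-F(w_*)] + \eta_t^2 N.
\end{align*}
The step-size restriction $\eta_t \leq \frac{1}{2L\kappa}$ is exactly what is needed to guarantee that $1 - 2L\kappa\,\eta_t \geq 0$, so the middle term is nonpositive and can be dropped (or kept as a nonnegative summable contribution) without reversing the inequality.

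From here the argument is identical to that of Theorem \ref{thm_general_02_new_02}. Conditions $\sum \eta_t^2 < \infty$ and $F(w_t)-F(w_*) \geq 0$ let me invoke the Supermartingale Convergence Theorem (Lemma \ref{prop_supermartingale}) on the nonnegative sequence $\|w_t - w_*\|^2$ to conclude that it converges almost surely to some finite random limit and that $\sum_t \eta_t (1-2L\kappa\eta_t)[F(w_t)-F(w_*)]$ is finite w.p.1. Combined with $\sum \eta_t = \infty$, the latter forces $\liminf_t [F(w_t)-F(w_*)] = 0$ w.p.1, which by strong convexity (via \eqref{eq:stronglyconvex_00} with $w' = w_*$) forces $\liminf_t \|w_t - w_*\|^2 = 0$ w.p.1; coupled with existence of the almost sure limit, this pins the limit down to $0$.

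There is no real obstacle here; the only subtle point is that, without Assumption \ref{ass_convex}, we cannot route through $\langle \nabla f(w_t;\xi_t), w_t - w_*\rangle \geq 0$ in expectation-free form, and must instead rely exclusively on strong convexity of $F$ applied after taking conditional expectation. This is why the constant inflates from $L$ to $L\kappa$: Lemma \ref{lem_bounded_secondmoment_04_new} pays a factor $\kappa$ precisely because it must bridge from $\|\nabla F(w)\|^2$ to $F(w)-F(w_*)$ using only smoothness and the Polyak–Łojasiewicz-type inequality \eqref{eq:stronglyconvex} instead of the sharper convexity-based co-coercivity identity available in the proof of Lemma \ref{lem_bounded_secondmoment_04}.
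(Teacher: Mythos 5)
Your proposal is correct and is essentially the paper's own argument: the paper proves this theorem by repeating the proof of Theorem \ref{thm_general_02_new_02} with Lemma \ref{lem_bounded_secondmoment_04_new} in place of Lemma \ref{lem_bounded_secondmoment_04}, the step-size bound $\eta_t \leq \frac{1}{2L\kappa}$ playing exactly the role you describe, followed by the same supermartingale argument. The only (immaterial) difference is that the paper drives the final contradiction through the summable term $\mu\eta_t\|w_t-w_*\|^2$, whereas you use the summable function-gap term, which additionally requires the easy observation that $\eta_t\to 0$ (from $\sum_t\eta_t^2<\infty$) so that $1-2L\kappa\eta_t$ is eventually bounded away from zero.
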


\begin{thm}\label{thm_res_sublinear_new_03}
Let Assumptions \ref{ass_stronglyconvex} and \ref{ass_smooth}  hold. 
Then, we can conclude the statement of Theorem \ref{thm_res_sublinear_new_02}  with the definition of the step size replaced by $\eta_t = \frac{\alpha}{\mu(t+E)} \leq \eta_0=\frac{1}{2L\kappa}$
 with $\kappa = \frac{L}{\mu}$ and $\alpha=2$, and all other occurrences of $L$ in $E$ and $T$ replaced by $L\kappa$.
%
%Let $E = \frac{2\alpha L\kappa}{\mu}$ with $\kappa = \frac{L}{\mu}$ and $\alpha=2$. Consider Algorithm \ref{sgd_algorithm} with a stepsize sequence such that $\eta_t = \frac{\alpha}{\mu(t+E)} \leq \eta_0=\frac{1}{2L\kappa}$. The expectation $\mathbb{E}[\|w_{t} - w_{*}\|^2]$ is at most
%$$ \frac{4\alpha^2 N}{\mu^2} 
%\frac{1 }{(t-T+E)} $$
%for $t\geq T =\frac{4L\kappa}{\mu}\max \{ \frac{L\kappa\mu}{N} \|w_{0} - w_{*}\|^2, 1\} - \frac{4L\kappa}{\mu}$. 
\end{thm}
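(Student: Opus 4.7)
The plan is to mirror the proof of Theorem \ref{thm_res_sublinear_new_02} verbatim, with the single structural change that Lemma \ref{lem_bounded_secondmoment_04_new} is invoked in place of Lemma \ref{lem_bounded_secondmoment_04}. Since the only place where Assumption \ref{ass_convex} enters the analysis of Theorem \ref{thm_res_sublinear_new_02} is through the second-moment bound on $\nabla f(w;\xi)$, the convexity of $F$ itself (which we still have from Assumption \ref{ass_stronglyconvex}) is enough to preserve every other inequality in the derivation. The numerical cost of dropping Assumption \ref{ass_convex} is exactly the replacement of the constant $4L$ by $4L\kappa$ in front of $[F(w)-F(w_*)]$.

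First, I would write out the standard one-step recursion $\|w_{t+1}-w_*\|^2 = \|w_t-w_*\|^2 - 2\eta_t\langle w_t-w_*,\nabla f(w_t;\xi_t)\rangle + \eta_t^2 \|\nabla f(w_t;\xi_t)\|^2$, take conditional expectation with respect to the history up to iteration $t$, and use the unbiasedness of $\nabla f(w_t;\xi_t)$ together with convexity of $F$ to bound the cross term by $-2\eta_t[F(w_t)-F(w_*)]$. Next, I would plug in Lemma \ref{lem_bounded_secondmoment_04_new} for the squared-norm term, producing
\begin{align*}
\mathbb{E}[\|w_{t+1}-w_*\|^2 \mid \mathcal{F}_t]
\leq{}& \|w_t-w_*\|^2 \\
&- 2\eta_t(1-2L\kappa\eta_t)[F(w_t)-F(w_*)] \\
&+ \eta_t^2 N.
\end{align*}
The step-size condition $\eta_t \leq \frac{1}{2L\kappa}$ keeps $(1-2L\kappa\eta_t)\geq 0$, so the middle term stays usable; combining with the strong convexity bound $2\mu[F(w_t)-F(w_*)] \geq \mu\|w_t-w_*\|^2$ that was used in Theorem \ref{thm_res_sublinear_new_02} then yields a recursion of the same form, but with every occurrence of $L$ in the relevant coefficients replaced by $L\kappa$.

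From here the plan is purely bookkeeping: transcribe the induction in Theorem \ref{thm_res_sublinear_new_02} that established $\mathbb{E}[\|w_t-w_*\|^2] \leq \frac{4\alpha^2 N}{\mu^2(t-T+E)}$, but perform the substitution $L\mapsto L\kappa$ in the definitions of $E$ and $T$ so that the chosen step size $\eta_t=\frac{\alpha}{\mu(t+E)}$ still starts below the required ceiling $\eta_0 = \frac{1}{2L\kappa}$ and so that the base case of the induction holds at the shifted threshold. All intermediate inequalities are preserved under this substitution because the only dependence on $L$ in the recursion came from the coefficient in front of $[F(w_t)-F(w_*)]$ in Lemma \ref{lem_bounded_secondmoment_04}, which has now become $L\kappa$.

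The main obstacle is not conceptual but notational: one must verify that every $L$ appearing in constants like $E$ and $T$ (as well as in the condition $\eta_0=\frac{1}{2L\kappa}$) originated from the Lemma \ref{lem_bounded_secondmoment_04} bound and therefore legitimately gets replaced by $L\kappa$, while the $L$'s coming from descent-lemma style estimates (if any are used in the original proof) get the same treatment because the worst-case bound under non-convex $f(w;\xi)$ inflates them by the same factor $\kappa$. Once this accounting is consistent, the resulting $O(1/t)$ rate follows by exactly the inductive argument of Theorem \ref{thm_res_sublinear_new_02}.
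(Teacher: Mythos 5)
Your overall plan---rerun the proofs of Theorems \ref{thm_general_02_new_02} and \ref{thm_res_sublinear_new_02} with Lemma \ref{lem_bounded_secondmoment_04_new} in place of Lemma \ref{lem_bounded_secondmoment_04}, so that $4L$ becomes $4L\kappa$ and consequently $L\mapsto L\kappa$ in $\eta_0$, $E$ and $T$---is exactly what the paper intends (it gives no separate proof and simply points back to those two proofs), and your observation that Assumption \ref{ass_convex} enters only through the second-moment bound is correct. However, the way you execute the one-step estimate would fail as written. In the paper's derivation the cross term $-2\eta_t\langle\nabla F(w_t),w_t-w_*\rangle$ is bounded with the \emph{full} strong-convexity inequality \eqref{eq:stronglyconvex_00}, which yields both $-2\eta_t[F(w_t)-F(w_*)]$ and $-\mu\eta_t\|w_t-w_*\|^2$; the first piece is what cancels the $4L\kappa\,\eta_t^2[F(w_t)-F(w_*)]$ term coming from Lemma \ref{lem_bounded_secondmoment_04_new} (using $\eta_t\le\frac{1}{2L\kappa}$), while the second piece supplies the contraction factor $(1-\mu\eta_t)$ on which the induction of Theorem \ref{thm_res_sublinear_new_02} is built. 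You instead bound the cross term by plain convexity of $F$, i.e.\ by $-2\eta_t[F(w_t)-F(w_*)]$ alone, and then try to recover the contraction afterwards from the leftover coefficient $2\eta_t(1-2L\kappa\eta_t)$ via strong convexity.

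That leftover coefficient is too weak: with $E=\frac{2\alpha L\kappa}{\mu}$ one has $2L\kappa\eta_t=\frac{E}{t+E}$, hence $1-2L\kappa\eta_t=\frac{t}{t+E}$, which is $0$ at $t=0$ (where $\eta_0=\frac{1}{2L\kappa}$ exactly) and gives at best a contraction factor $1-\frac{\alpha t}{(t+E)^2}$ rather than $1-\frac{\alpha}{t+E}$. This is \emph{not} ``a recursion of the same form'': the induction step in Theorem \ref{thm_res_sublinear_new_02} needs the effective $\alpha$ in the contraction to exceed $1$ so that $G\ge\frac{\alpha^2}{\alpha-1}$ can absorb the $\eta_t^2N$ term, whereas your effective coefficient $\frac{\alpha t}{t+E}$ is below $1$ for all $t<E$, so the ``purely bookkeeping'' transcription breaks there. (Also, the inequality you quote, $2\mu[F(w_t)-F(w_*)]\ge\mu\|w_t-w_*\|^2$, is mis-stated---strong convexity with $\nabla F(w_*)=0$ gives $F(w_t)-F(w_*)\ge\frac{\mu}{2}\|w_t-w_*\|^2$---and it is not the inequality used at this point in the paper.) The repair is simply to do what ``mirror verbatim'' promises: apply \eqref{eq:stronglyconvex_00} directly to the cross term, obtaining $\mathbb{E}[\|w_{t+1}-w_*\|^2\,|\,\mathcal{F}_t]\le(1-\mu\eta_t)\|w_t-w_*\|^2-2\eta_t(1-2L\kappa\eta_t)[F(w_t)-F(w_*)]+\eta_t^2N$, drop the nonpositive middle term using $\eta_t\le\frac{1}{2L\kappa}$, and then your substitution $L\mapsto L\kappa$ in $E$, $T$ and $G$ carries the induction through exactly as in the paper.
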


We compare our result in Theorem \ref{thm_res_sublinear_new_03} with that in \cite{bottou2016optimization} in the following remark. 
\begin{rem}
By strong convexity of $F$, Lemma \ref{lem_bounded_secondmoment_04_new} implies $\mathbb{E}[\|\nabla f(w; \xi)\|^2] \leq  2 \kappa^2 \| \nabla F(w) \|^2 + N$, for $\forall w \in \mathbb{R}^d$, where $\kappa = \frac{L}{\mu}$ and $N = 2 \mathbb{E}[ \|\nabla f(w_{*}; \xi)\|^2 ]$. We can now
substitute  the value $M = 2 \kappa^2$ into Theorem 4.7 in \cite{bottou2016optimization}. We  observe that the resulting initial learning rate in \cite{bottou2016optimization} has to satisfy $\eta_0 \leq \frac{1}{2 L \kappa^2}$ while our results allows $\eta_0 = \frac{1}{2 L \kappa}$. 
We are able to achieve this improvement by introducing Assumption  \ref{ass_smooth}, which holds for many ML problems. 

Recall that under  Assumption \ref{ass_convex}, our initial learning rate is $\eta_0 = \frac{1}{2 L}$ (in Theorem \ref{thm_res_sublinear_new_02}). Thus  Assumption \ref{ass_convex} provides further  improvement of the conditions on the learning rate. 
\end{rem}

\section{Asynchronous Stochastic Optimization aka Hogwild!}
\label{general}

Hogwild! \cite{Hogwild} is an asynchronous stochastic optimization method where writes to and reads from vector  positions in shared memory can be inconsistent (this corresponds to (\ref{eqwM2}) as we shall see). 
However, as mentioned in~\cite{ManiaPanPapailiopoulosEtAl2015}, for the purpose of analysis  the method in \cite{Hogwild} performs single vector entry updates that are randomly selected from the non-zero entries of the computed gradient as in (\ref{eqwM1}) (explained later) and requires the assumption of consistent vector reads together with the bounded gradient assumption to prove convergence. Both \cite{ManiaPanPapailiopoulosEtAl2015} and \cite{DeSaZhangOlukotunEtAl2015} prove the same result for fixed step size based on the assumption of bounded stochastic gradients in the strongly convex case but now without assuming consistent vector reads and writes.
In these works the fixed step size $\eta$ must depend on $\sigma$ from the bounded gradient assumption, however, one does not usually know $\sigma$ and thus, we cannot compute a suitable $\eta$ a-priori. 

As claimed by the authors in \cite{ManiaPanPapailiopoulosEtAl2015}, they can eliminate the bounded gradient assumption in their analysis of Hogwild!, which however was only mentioned as a remark without proof. On the other hand, the authors of recent unpublished work~\cite{Leblond2018} formulate and prove, without  the bounded gradient assumption, a precise theorem about the convergence rate of Hogwild! of the form
$$ \mathbb{E}[\|w_{t} - w_* \|^2] \leq (1-\rho)^t(2 \|w_0-w_*\|^2) + b,$$
where $\rho$ is a function of several parameters but independent of the fixed chosen step size $\eta$  and where $b$ is a function of several parameters and has a linear dependency with respect to the fixed step size, i.e., $b=O(\eta)$.

In this section, we discuss  the convergence of Hogwild!  with \textbf{diminishing} stepsize where writes to and reads from vector  positions in shared memory can be  \textbf{inconsistent}. This is a slight modification of the original Hogwild! where the stepsize is fixed. 
In our analysis we also  \textbf{do not use the bounded gradient assumption} as in \cite{Leblond2018}. Moreover, (a) we focus on solving the   \textbf{more general problem} in \eqref{main_prob_expected_risk}, while \cite{Leblond2018} considers the specific case of  the ``finite-sum'' 
	problem in \eqref{main_prob}, and (b) we show that our analysis generalizes to the  \textbf{non-convex case}, i.e., we do not need to assume functions $f(w;\xi)$ are convex (we only require $F(w) =  \mathbb{E}[f(w;\xi)]$ to be strongly convex) as opposed to the assumption in \cite{Leblond2018}.

\subsection{Recursion}

We first formulate a general recursion for $w_t$ to which our analysis applies, next we will explain how the different variables in the recursion interact and describe two special cases, and finally we present pseudo code of the algorithm using the recursion.  

The recursion explains which positions in $w_t$ should be updated in order to compute $w_{t+1}$. Since $w_t$ is stored in shared memory and is being updated in a possibly non-consistent way by multiple cores who each perform recursions, the shared memory will contain a vector $w$ whose entries represent a mix of updates. That is, before performing the computation of a recursion, a core will first read  $w$ from shared memory, however, while reading $w$ from shared memory, the entries in $w$ are being updated out of order. The final vector $\hat{w}_t$ read by the core represents an aggregate of a mix of updates in previous iterations.

The general recursion is defined as follows: For $t\geq 0$,
\begin{equation}
 w_{t+1} = w_t - \eta_t d_{\xi_t}  S^{\xi_t}_{u_t} \nabla f(\hat{w}_t;\xi_t),\label{eqwM}
 \end{equation}
 where
 \begin{itemize}
 \item $\hat{w}_t$ represents the vector used in computing the gradient $\nabla f(\hat{w}_t;\xi_t)$ and whose entries have been read (one by one)  from  an aggregate of a mix of  previous updates that led to $w_{j}$, $j\leq t$, and
 \item the $S^{\xi_t}_{u_t}$ are diagonal 0/1-matrices with the property that there exist real numbers $d_\xi$ satisfying
\begin{equation} d_\xi \mathbb{E}[S^\xi_u | \xi] = D_\xi, \label{eq:SexpM} \end{equation}
where the expectation is taken over $u$ and $D_\xi$ is the diagonal 0/1 matrix whose $1$-entries correspond to the non-zero positions in $\nabla f(w;\xi)$, i.e., the $i$-th entry of $D_\xi$'s diagonal is equal to 1 if and only if there exists a $w$ such that the $i$-th position of $\nabla f(w;\xi)$ is non-zero. 
\end{itemize}

The role of matrix $S^{\xi_t}_{u_t}$ is that it filters which positions of gradient $\nabla f(\hat{w}_t;\xi_t)$ play a role in (\ref{eqwM}) and need to be computed. Notice that $D_\xi$ represents the support of $\nabla f(w;\xi)$; by $|D_\xi|$ we denote the number of 1s in $D_\xi$, i.e., $|D_\xi|$ equals the size of the support of $\nabla f(w;\xi)$.

We will restrict ourselves to choosing (i.e., fixing a-priori) {\em non-empty} matrices  $S^\xi_u$ that ``partition'' $D_\xi$ in $D$ approximately ``equally sized'' $S^\xi_u$: 
$$ \sum_u S^\xi_u = D_\xi, $$
where each matrix $S^\xi_u$ has either $\lfloor |D_\xi|/D \rfloor$ or $\lceil |D_\xi|/D \rceil$ ones on its diagonal. We uniformly choose one of the matrices $S^{\xi_t}_{u_t}$ in (\ref{eqwM}), hence, $d_\xi$ equals the number of matrices $S^\xi_u$, see (\ref{eq:SexpM}).

In other to explain recursion (\ref{eqwM}) we first consider two special cases. For $D=\bar{\Delta}$, where 
$$ \bar{\Delta} = \max_\xi \{ |D_\xi|\}$$
represents the maximum number of non-zero positions in any gradient computation $f(w;\xi)$, we have that for all $\xi$, there are exactly $|D_\xi|$ diagonal matrices $S^\xi_u$ with a single 1 representing each of the elements in $D_\xi$. Since  $p_\xi(u)= 1/|D_\xi|$ is the uniform distribution, we have $\mathbb{E}[S^\xi_u | \xi] = D_\xi / |D_\xi|$, hence, $d_\xi = |D_\xi|$. This gives the recursion
\begin{equation}
 w_{t+1} = w_t - \eta_t |D_\xi|  [ \nabla f(\hat{w}_t;\xi_t)]_{u_t},\label{eqwM1}
 \end{equation}
 where $ [ \nabla f(\hat{w}_t;\xi_t)]_{u_t}$ denotes the $u_t$-th position of $\nabla f(\hat{w}_t;\xi_t)$ and where $u_t$ is a uniformly selected position that corresponds to a non-zero entry in  $\nabla f(\hat{w}_t;\xi_t)$.
 
At the other extreme, for $D=1$, we have exactly one matrix $S^\xi_1=D_\xi$ for each $\xi$, and we have $d_\xi=1$. This gives the recursion
\begin{equation}
 w_{t+1} = w_t - \eta_t  \nabla f(\hat{w}_t;\xi_t).\label{eqwM2}
 \end{equation}
Recursion (\ref{eqwM2}) represents Hogwild!. In a single-core setting where updates are done in a consistent way and $\hat{w}_t=w_t$ yields SGD.

%. Experiments in Section \ref{sec_experiments} indicate that while the expected convergence rates do seem to look similar, their variances differ.
 
 Algorithm \ref{HogWildAlgorithm} gives the pseudo code corresponding to recursion (\ref{eqwM}) with our choice of sets $S^\xi_u$ (for parameter $D$).
 
 \begin{algorithm}
\caption{Hogwild! general recursion}
\label{HogWildAlgorithm}
\begin{algorithmic}[1]

   \STATE {\bf Input:} $w_{0} \in \R^d$
   \FOR{$t=0,1,2,\dotsc$ {\bf in parallel}} 
    
  \STATE read each position of shared memory $w$
  denoted by $\hat w_t$  {\bf (each position read is atomic)}
  \STATE draw a random sample $\xi_t$ and a random ``filter'' $S^{\xi_t}_{u_t}$
  \FOR{positions $h$ where $S^{\xi_t}_{u_t}$ has a 1 on its diagonal}
   \STATE compute $g_h$ as the gradient $\nabla f(\hat{w}_t;\xi_t)$ at position $h$
   \STATE add $\eta_t d_{\xi_t} g_h$ to the entry at position $h$ of $w$ in shared memory {\bf (each position update is atomic)}
   \ENDFOR
   \ENDFOR
\end{algorithmic}
\end{algorithm}

\subsection{Analysis}

Besides Assumptions \ref{ass_stronglyconvex}, \ref{ass_smooth}, and for now \ref{ass_convex}, we assume the following assumption regarding a parameter $\tau$, called the delay, which indicates which updates in previous iterations have certainly made their way into shared memory $w$.

\begin{ass}[Consistent with delay $\tau$]
\label{ass_tau}
We say that shared memory is consistent with delay $\tau$  with respect to recursion (\ref{eqwM}) if, for all $t$, vector $\hat{w}_t$ includes the aggregate of the updates up to and including those made during the $(t-\tau)$-th iteration (where (\ref{eqwM}) defines the $(t+1)$-st iteration). Each position read from shared memory is atomic and each position update to shared memory is atomic (in that these cannot be interrupted by another update to the same position).
\end{ass}

In other words in the $(t+1)$-th iteration,  $\hat{w}_t$ equals  $w_{t-\tau}$ plus some subset of position updates made during iterations $t-\tau, t-\tau+1, \ldots, t-1$. We assume that there exists a constant delay $\tau$ satisfying Assumption \ref{ass_tau}.

%Appendix \ref{sec:Hogwild_insconsistent_read_write} 
The supplementary material proves the following theorem where 
$$\bar{\Delta}_D \eqdef D \cdot \mathbb{E}[\lceil |D_\xi|/D \rceil].$$

\begin{thm}
\label{theorem:Hogwild_newnew1}
Suppose Assumptions \ref{ass_stronglyconvex}, \ref{ass_smooth}, \ref{ass_convex} and \ref{ass_tau}  and  consider Algorithm~\ref{HogWildAlgorithm} for sets $S^\xi_u$ with parameter $D$. Let  $\eta_t = \frac{\alpha_t}{\mu(t+E)}$ with $4\leq \alpha_t \leq\alpha$ and $E = \max\{ 2\tau, \frac{4 L \alpha D}{\mu}\}$. Then, the expected number of single vector entry updates after $t$ iterations is equal to
$$t' = t \bar{\Delta}_D /D$$  and
 expectations
 $\mathbb{E}[\|\hat{w}_{t} - w_* \|^2]$ and $\mathbb{E}[\|w_{t} - w_* \|^2]$ are at most
  \begin{eqnarray*}
  \frac{4\alpha^2D N}{\mu^2} \frac{t}{(t + E - 1)^2} + O\left(\frac{\ln t}{(t+E-1)^{2}}\right).
%\frac{4\alpha^2 \bar{\Delta}_D N}{\mu^2} \frac{1}{t'} + O\left(\frac{\ln t'}{t'^{2}}\right).
\end{eqnarray*} 
\end{thm}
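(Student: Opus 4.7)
The proof plan is to reduce the asynchronous analysis to an SGD-style one-step inequality and then solve the resulting non-homogeneous recurrence by induction on $t$.

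First I would expand $\|w_{t+1}-w_*\|^2$ using the recursion \eqref{eqwM} and condition on the sigma-algebra $\mathcal{F}_t$ generated by all randomness up through iteration $t$ (so that $\hat w_t$, $w_t$, $w_{t-\tau},\dots,w_{t-1}$ are all measurable). Taking expectation over the filter $u_t$ with the identity $d_\xi \mathbb{E}[S^\xi_u\mid\xi]=D_\xi$ collapses the first-order term to $-2\eta_t\langle w_t-w_*, D_{\xi_t}\nabla f(\hat w_t;\xi_t)\rangle$; then expectation over $\xi_t$ makes this $-2\eta_t\langle w_t-w_*,\nabla F(\hat w_t)\rangle$. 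The quadratic term $\|\eta_t d_{\xi_t} S^{\xi_t}_{u_t}\nabla f(\hat w_t;\xi_t)\|^2$ is handled by observing that $S^{\xi_t}_{u_t}$ has at most $\lceil|D_\xi|/D\rceil$ ones on its diagonal and $d_{\xi_t}$ is the number of filters, so averaging over $u_t$ yields a factor at most $\bar\Delta_D$, i.e.\ $\mathbb{E}_u[\|d_\xi S^\xi_u v\|^2]\leq \bar\Delta_D \|v\|^2$. Lemma~\ref{lem_bounded_secondmoment_04} then bounds the expected stochastic gradient squared by $4L[F(\hat w_t)-F(w_*)]+N$.

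Next comes the asynchrony bookkeeping, which is the technical heart of the argument. I need to replace $\hat w_t$ by $w_t$ (or $w_{t-\tau}$) in the cross term without losing too much. By Assumption~\ref{ass_tau}, $\hat w_t - w_t$ is a sum of at most $\tau$ past updates of the form $\eta_{s}d_{\xi_s} S^{\xi_s}_{u_s}\nabla f(\hat w_s;\xi_s)$ for $t-\tau\leq s<t$. I would rewrite $\langle w_t-w_*,\nabla F(\hat w_t)\rangle=\langle \hat w_t-w_*,\nabla F(\hat w_t)\rangle+\langle w_t-\hat w_t,\nabla F(\hat w_t)\rangle$, use strong convexity (\ref{eq:stronglyconvex}) to lower-bound the first term by $2\mu[F(\hat w_t)-F(w_*)]$, and control the second term with a Young-type inequality $2ab\leq \beta a^2+b^2/\beta$, with $\beta$ proportional to $\eta_t$. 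The $L$-smoothness of each realization lets me bound $\|\nabla f(\hat w_s;\xi_s)\|$ by $\|\nabla f(w_*;\xi_s)\|+L\|\hat w_s-w_*\|$, and summing over the $\tau$ delayed indices produces terms $\eta_t\sum_{s=t-\tau}^{t-1}\eta_s^2(\cdots)$. The choice $E\geq 2\tau$ guarantees $\eta_s\leq 2\eta_t$ for $s\in[t-\tau,t-1]$, so these cross terms can be absorbed.

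Putting the pieces together yields, after taking unconditional expectations, a one-step recurrence of the form
\begin{equation*}
Y_{t+1} \leq (1-\mu\eta_t)Y_t + C_1 \eta_t^2 + C_2 \eta_t \sum_{s=t-\tau}^{t-1}\eta_s^2 Y_s + C_3\eta_t^2 N,
\end{equation*}
where $Y_t=\mathbb{E}\|w_t-w_*\|^2$. The constant $C_1$ is proportional to $\bar\Delta_D N$, giving the leading $\frac{4\alpha^2 D N}{\mu^2}\cdot\frac{t}{(t+E-1)^2}$ term, and the $\tau$-dependent tail produces the $O(\ln t/(t+E-1)^2)$ correction (the $\ln t$ coming from summing $\eta_s^2\sim 1/s^2$ telescoped by the factor $\eta_t$). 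With $\eta_t=\alpha_t/(\mu(t+E))$, I would prove by induction on $t$ that $Y_t\leq A\cdot t/(t+E-1)^2+B\ln t/(t+E-1)^2$ for suitable $A,B$, using $1-\mu\eta_t\leq 1-4/(t+E)$ (since $\alpha_t\geq 4$) to kill the leading drift. Finally, the bound for $\mathbb{E}\|\hat w_t-w_*\|^2$ follows by writing $\hat w_t-w_*=(w_t-w_*)-(w_t-\hat w_t)$, expanding, and reusing the same bound $\|w_t-\hat w_t\|^2\leq \tau\sum_{s=t-\tau}^{t-1}\eta_s^2\mathbb{E}\|d_{\xi_s}S^{\xi_s}_{u_s}\nabla f(\hat w_s;\xi_s)\|^2$ already controlled above.

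The main obstacle will be the third step: carefully juggling the delayed terms so that (i) the Young-inequality penalty stays of order $\eta_t^2$ after absorbing $\|w_t-\hat w_t\|^2$, (ii) the constants that emerge are compatible with the condition $E\geq 4L\alpha D/\mu$ used in the induction, and (iii) the recursive terms $Y_s$ for $s\in[t-\tau,t-1]$ can be bootstrapped via the inductive hypothesis without blowing up—this is where the assumption $\tau\lesssim\sqrt{t/\ln t}$ implicitly becomes relevant, and where the $\ln t$ factor in the lower-order term is inherited.
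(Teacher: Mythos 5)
Your overall route is the same as the paper's: expand $\|w_{t+1}-w_*\|^2$ one step at a time conditioned on the filtration, use the unbiasedness property \eqref{eq:SexpM} of the filtered gradient, split $\langle\nabla F(\hat w_t),w_t-w_*\rangle$ into a $\hat w_t-w_*$ part (strong convexity) and a $w_t-\hat w_t$ part, bound $\|w_t-\hat w_t\|^2$ by the at most $\tau$ delayed updates using the second-moment bound of Lemma \ref{lem_bounded_secondmoment_04}, and solve the resulting recurrence for $\eta_t=\frac{\alpha_t}{\mu(t+E)}$ (the paper solves it by an explicit product formula rather than your direct induction, and sharpens your crude factor $\tau$ to $(1+\sqrt{\Delta}\tau)$ via the sparsity inequality \eqref{Masdfasdfasfa}; neither difference is essential).

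However, two steps as written would not deliver the stated bound. First, the second-moment factor from averaging over $u_t$ is not $\bar{\Delta}_D$: by \eqref{eq:SexpM}, $\mathbb{E}[\|d_{\xi} S^{\xi}_{u}v\|^2\mid\xi]=d_{\xi}\|D_{\xi}v\|^2\leq D\|v\|^2$, since $d_{\xi}\leq D$ is the number of sets in the partition of $D_\xi$. Carrying $\bar{\Delta}_D$ (which satisfies $\bar{\Delta}_D\geq D$, e.g. $\bar{\Delta}_D=\mathbb{E}[|D_{\xi}|]$ when $D=1$) through the argument yields a leading constant $\frac{4\alpha^2\bar{\Delta}_D N}{\mu^2}$ per iteration, strictly weaker than the claimed $\frac{4\alpha^2 D N}{\mu^2}$; the quantity $\bar{\Delta}_D$ only enters when the rate is re-expressed in terms of $t'$. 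Second, the delayed quantities you generate are $\mathbb{E}[\|\hat w_s-w_*\|^2]$, not $Y_s=\mathbb{E}[\|w_s-w_*\|^2]$, because the updates made in iterations $t-\tau,\dots,t-1$ are gradients evaluated at the read vectors $\hat w_s$; so the recurrence you wrote does not close as stated. You must either run the induction on (a decreasing majorant of) $\mathbb{E}[\|\hat w_t-w_*\|^2]$ and recover the bound on $w_t$ afterwards via $\|\hat w-w_*\|^2\leq 2\|\hat w-w\|^2+2\|w-w_*\|^2$ (this is exactly how the paper proceeds through Lemmas \ref{lemma:hogwild_21}--\ref{lemma:hogwild_4_new1}), or insert that same inequality inside the delayed sums, accepting the extra higher-order terms. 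Relatedly, your lower bound $\langle\nabla F(\hat w_t),\hat w_t-w_*\rangle\geq 2\mu[F(\hat w_t)-F(w_*)]$ is not what strong convexity gives; what you need, and what \eqref{eq:stronglyconvex_00} provides, is $F(\hat w_t)-F(w_*)+\frac{\mu}{2}\|\hat w_t-w_*\|^2$, after which $-\frac{\mu}{2}\|\hat w_t-w_*\|^2\leq-\frac{\mu}{4}\|w_t-w_*\|^2+\frac{\mu}{2}\|w_t-\hat w_t\|^2$ supplies the contraction in $\|w_t-w_*\|^2$. With these corrections your plan coincides with the paper's proof.
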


In terms of $t'$, the expected number  single vector entry updates after $t$ iterations, $\mathbb{E}[\|\hat{w}_{t} - w_* \|^2]$ and $\mathbb{E}[\|w_{t} - w_* \|^2]$ are at most
$$\frac{4\alpha^2 \bar{\Delta}_D N}{\mu^2} \frac{1}{t'} + O\left(\frac{\ln t'}{t'^{2}}\right).$$

\begin{rem}
In (\ref{eqwM1}) $D=\bar{\Delta}$, hence, $\lceil |D_\xi|/D \rceil =1$ and $\bar{\Delta}_D = \bar{\Delta} = \max_\xi \{|D_\xi|\}$. In (\ref{eqwM2}) $D=1$, hence, $\bar{\Delta}_D = \mathbb{E}[|D_\xi|]$. This shows that the upper bound in Theorem \ref{theorem:Hogwild_newnew1} is better for (\ref{eqwM2}) with $D=1$. If we assume no delay, i.e. $\tau=0$, in addition to $D=1$, then we obtain SGD. Theorem \ref{thm_res_sublinear_new_02} shows that, measured in $t'$, we obtain the upper bound
$$\frac{4\alpha_{SGD}^2 \bar{\Delta}_D N}{\mu^2} \frac{1}{t'} $$
with $\alpha_{SGD}=2$ as opposed to $\alpha\geq 4$.

With respect to parallelism, SGD assumes a single core, while (\ref{eqwM2}) and (\ref{eqwM1}) allow multiple cores. 
Notice that recursion (\ref{eqwM1}) allows us to partition the position of the shared memory among the different processor cores in such a way that each partition can only be updated by its assigned core and where partitions can be read by all cores. This allows optimal resource sharing and could make up for the difference between $\bar{\Delta}_D$ for (\ref{eqwM1}) and (\ref{eqwM2}). We hypothesize that, for a parallel implementation, $D$ equal to a fraction of $\bar{\Delta}$ will lead to best performance.
% with the caveat that, even though recursions (\ref{eqwM1}) and (\ref{eqwM2}) lead to similar upper bounds on the expected convergence rate in our theoretical analysis, in practice the two recursions may show significant differences in the observed variances of the convergence rates, see Section \ref{sec_experiments}.
\end{rem}

\begin{rem}
Surprisingly, the leading term of the upper bound on the convergence rate is independent of delay $\tau$. On one hand, one would expect that a more recent read which contains more of the updates done during the last $\tau$ iterations will lead to better convergence. When inspecting the second order term in the proof in 
%Appendix \ref{sec:Hogwild_insconsistent_read_write} 
the supplementary material, we do see that a smaller $\tau$ (and/or smaller sparsity) makes the convergence rate smaller. That is, asymptotically $t$ should be large enough as a function of $\tau$ (and other parameters) in order for the leading term to dominate. 

Nevertheless, in asymptotic terms (for larger $t$) the dependence on $\tau$ is not noticeable.  In fact, 
%Appendix \ref{sec:Hogwild_insconsistent_read_write} 
the supplementary material shows that we may allow $\tau$ to be a monotonic increasing function of $t$ with
$$\frac{2 L \alpha D}{\mu}\leq \tau(t)\leq \sqrt{t \cdot L(t)},$$
where $L(t)=\frac{1}{\ln t} - \frac{1}{(\ln t)^2}$ (this will make $E = \max\{ 2\tau(t), \frac{4 L \alpha D}{\mu}\}$ also a function of $t$). The leading term of the convergence rate does not change while the second order terms increase to $O(\frac{1}{t\ln t})$. We show that, for
$$ t\geq T_0 =  \exp[ 2\sqrt{\Delta}(1+\frac{(L+\mu)\alpha}{\mu})],$$
where $\Delta= \max_i \Prob \left(   i \in  D_\xi  \right)$ measures sparsity,
the higher order terms that contain $\tau(t)$ (as defined above) are at most the leading term.

Our intuition behind this phenomenon is that for large $\tau$, all the last $\tau$ iterations before the $t$-th iteration use vectors $\hat{w}_j$  with entries that are dominated by the aggregate of updates that happened till iteration $t-\tau$. Since the average sum of the updates during the last $\tau$ iterations is equal to 
 \begin{equation} - \frac{1}{\tau} \sum_{j=t-\tau}^{t-1} \eta_j d_{\xi_j}  S^{\xi_j}_{u_j} \nabla f(\hat{w}_j;\xi_t) \label{Eqtau} \end{equation}
 and all $\hat{w}_j$ look alike in that they mainly represent learned information before the $(t-\tau)$-th iteration, (\ref{Eqtau}) becomes an estimate 
 of the expectation of  (\ref{Eqtau}), i.e.,
 %\begin{eqnarray}  && 
 \begin{equation}
 \sum_{j=t-\tau}^{t-1} \frac{-\eta_j}{\tau}  \mathbb{E}[d_{\xi_j}  S^{\xi_j}_{u_j} \nabla f(\hat{w}_j;\xi_t)] 
 %\nonumber \\ &=& 
 =\sum_{j=t-\tau}^{t-1} \frac{-\eta_j}{\tau}  \nabla F(\hat{w}_j). \label{EGD} 
 \end{equation}
 %\end{eqnarray}
 This looks like GD which in the strong convex case has convergence rate $\leq c^{-t}$ for some constant $c>1$. This already shows that larger $\tau$ could help convergence as well. However, 
 %for problem (\ref{main_prob_expected_risk}) 
 estimate (\ref{Eqtau}) has estimation noise with respect to (\ref{EGD}) which explains why in this thought experiment we cannot attain $c^{-t}$ but can only reach a much smaller convergence rate of e.g. $O(1/t)$ as in Theorem \ref{theorem:Hogwild_newnew1}. 
 
 Experiments in Section \ref{sec_experiments} confirm our analysis.
\end{rem}
 
\begin{rem} 
The higher order terms in the proof 
%in Appendix \ref{sec:Hogwild_insconsistent_read_write} 
in the supplementary material show that, as in Theorem \ref{thm_res_sublinear_new_02},  the expected convergence rate in Theorem \ref{theorem:Hogwild_newnew1} depends on $\|w_0-w_*\|^2$. The proof shows that, for 
$$ t \geq T_1 = \frac{\mu^2}{\alpha^2 N D}\|w_0-w_*\|^2,$$
the higher order term that contains $\|w_0-w_*\|^2$ is at most the leading term. This is comparable to $T$ in Theorem \ref{thm_res_sublinear_new_02} for SGD.
\end{rem}

\begin{rem}
Step size $\eta_t=\frac{\alpha_t}{\mu(t+E)}$ with $4\leq \alpha_t \leq\alpha$ can be chosen to be fixed during periods whose ranges exponentially increase. For $t+E\in [2^h,2^{h+1})$ we define $\alpha_t= \frac{4(t+E)}{2^h}$. Notice that $4\leq \alpha_t<8$ which satisfies the conditions of Theorem \ref{theorem:Hogwild_newnew1} for $\alpha=8$. This means that we can choose 
$$\eta_t = \frac{\alpha_t}{\mu(t+E)}=\frac{4}{\mu 2^h}$$
as step size for $t+E\in [2^h,2^{h+1})$. This choice for $\eta_t$ allows changes in $\eta_t$ to be easily synchronized between cores since these changes only happen when $t+E=2^h$ for some integer $h$. That is, if each core is processing iterations at the same speed, then each core on its own may reliably assume that after having processed $(2^h-E)/P$ iterations the aggregate of all $P$ cores has approximately processed $2^h-E$ iterations. So, after $(2^h-E)/P$ iterations a core will increment its version of $h$ to $h+1$. This will introduce some noise as the different cores will not increment their $h$ versions at exactly the same time, but this only happens during a small interval around every $t+E=2^h$. This will occur rarely for larger $h$.
%Since this happens at exponentially in time separated moments, this will occur rarely.
\end{rem}

\subsection{Convergence Analysis without Convexity}
 
 % In Appendix \ref{sec:Hogwild_insconsistent_read_write} 
 In the supplementary material, we also show that the proof of Theorem \ref{theorem:Hogwild_newnew1} can easily be modified such that Theorem \ref{theorem:Hogwild_newnew1} with $E\geq \frac{4L\kappa \alpha D}{\mu}$ also holds in the non-convex case of the component functions, i.e., we do not need  Assumption \ref{ass_convex}. Note that this case is not analyzed in \cite{Leblond2018}.

\begin{thm}\label{thm_6}
Let Assumptions \ref{ass_stronglyconvex} and \ref{ass_smooth}  hold. 
Then, we can conclude the statement of Theorem \ref{theorem:Hogwild_newnew1} with $E\geq \frac{4L\kappa \alpha D}{\mu}$ for $\kappa = \frac{L}{\mu}$.
\end{thm}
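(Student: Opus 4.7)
The plan is to mirror the proof of Theorem \ref{theorem:Hogwild_newnew1} line by line, replacing every invocation of Lemma \ref{lem_bounded_secondmoment_04} by Lemma \ref{lem_bounded_secondmoment_04_new}, and tracking how the extra factor of $\kappa = L/\mu$ propagates through the recursion analysis. Since Assumption \ref{ass_convex} was only needed to obtain the second-moment bound $\mathbb{E}[\|\nabla f(w;\xi)\|^2]\leq 4L[F(w)-F(w_*)]+N$, and Lemma \ref{lem_bounded_secondmoment_04_new} gives the analogous bound $\mathbb{E}[\|\nabla f(w;\xi)\|^2]\leq 4L\kappa[F(w)-F(w_*)]+N$ under Assumptions \ref{ass_stronglyconvex} and \ref{ass_smooth} alone, the rest of the argument should go through with a single scalar change.

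First, I would restate the one-step recursion for $\mathbb{E}[\|w_{t+1}-w_*\|^2]$ developed in the supplementary proof of Theorem \ref{theorem:Hogwild_newnew1}. There the quantity $\mathbb{E}\|d_{\xi_t} S^{\xi_t}_{u_t}\nabla f(\hat w_t;\xi_t)\|^2$ is bounded above by $D\cdot \mathbb{E}\|\nabla f(\hat w_t;\xi_t)\|^2$ (this uses only the partition structure of the filters $S^\xi_u$, not convexity of $f(\cdot;\xi)$), after which Lemma \ref{lem_bounded_secondmoment_04} is applied to obtain a term of the form $4LD\cdot[F(\hat w_t)-F(w_*)]+DN$. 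Substituting Lemma \ref{lem_bounded_secondmoment_04_new} instead turns this into $4L\kappa D\cdot[F(\hat w_t)-F(w_*)]+DN$. Likewise, any cross-term involving $\langle\nabla F(w_t),\mathbb{E}[\nabla f(\hat w_t;\xi_t)]\rangle$ is handled purely via strong convexity and $L$-smoothness of $F$, which hold regardless of Assumption \ref{ass_convex}, so nothing changes there.

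Next, I would verify the step-size admissibility condition. In the convex proof one requires $\eta_t \leq 1/(2L)$-type bounds that, combined with $D$, lead to the condition $E\geq \frac{4L\alpha D}{\mu}$. Replacing $4L$ by $4L\kappa$ in the second-moment bound exactly replaces $L$ by $L\kappa$ in this threshold, yielding the hypothesis $E\geq \frac{4L\kappa\alpha D}{\mu}$ stated in the theorem. With this adjusted choice of $E$, the same induction used for Theorem \ref{theorem:Hogwild_newnew1} closes: one sets up the same ansatz $\mathbb{E}[\|w_t-w_*\|^2]\leq \frac{4\alpha^2 D N}{\mu^2}\cdot\frac{t}{(t+E-1)^2}+O\!\bigl(\frac{\ln t}{(t+E-1)^2}\bigr)$, and the inductive step rests on the bound $2\mu[F(w)-F(w_*)]\leq \|\nabla F(w)\|^2$ from Assumption \ref{ass_stronglyconvex} together with the modified second-moment estimate, both of which are available here.

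Finally, I would check the passage between $w_t$ and $\hat w_t$ under Assumption \ref{ass_tau}. The delay argument in the supplementary material controls $\mathbb{E}\|\hat w_t - w_t\|^2$ via telescoping $L$-smoothness of individual realizations, which again does not invoke convexity of $f(\cdot;\xi)$; hence the same $\tau$-dependent higher-order terms arise, absorbed into the $O(\ln t/(t+E-1)^2)$ remainder exactly as before. The main obstacle, and the only place requiring care, is bookkeeping: one must make sure that every constant denoted $L$ in the Hogwild! proof that originated from Lemma \ref{lem_bounded_secondmoment_04} is promoted to $L\kappa$, while constants coming from pure $L$-smoothness of $F$ or from the partition identity \eqref{eq:SexpM} remain unchanged. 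Once this substitution is performed consistently, the conclusion of Theorem \ref{theorem:Hogwild_newnew1} follows verbatim, which proves Theorem \ref{thm_6}.
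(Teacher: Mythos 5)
Your proposal is correct and follows essentially the same route as the paper: the paper's own argument simply replaces $L$ by $L\kappa$ at the two places where Lemma \ref{lem_bounded_secondmoment_04} enters the Hogwild! analysis (the bound used inside Lemma \ref{lemma:hogwild_21} and inequality \eqref{ineq:bounded_lemma3_hogwild} in Lemma \ref{lemma:hogwild_1_new1}), which forces $\eta_t \leq \frac{1}{4L\kappa D}$ and hence $E\geq \frac{4L\kappa\alpha D}{\mu}$, with $L^2$ promoted to $L^2\kappa$ only in second-order terms. Your bookkeeping observation --- that constants arising from $L$-smoothness of $F$, from \eqref{eq:Lsmooth_basic}, and from the partition identity \eqref{eq:SexpM} stay unchanged --- is exactly the point the paper relies on.
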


%\input{Fixed_stepsize.tex}

%========================

%\clearpage 

%\input{hogwild_description_UCONN}

%========================

\section{Numerical Experiments}\label{sec_experiments}

For our numerical experiments, we consider the finite sum minimization problem in \eqref{main_prob}. 
%\begin{equation*}
%\min_{w \in \mathbb{R}^d} \left\{ F(w) = \frac{1}{n} \sum_{i=1}^n f_i(w) \right\}.
% \end{equation*}
We consider $\ell_2$-regularized logistic regression problems with
\begin{align*}
f_i(w) = \log(1 + \exp(-y_i \langle x_i, w\rangle )) +
\frac{\lambda}{2} \| w \|^2,
\end{align*}
where the penalty parameter $\lambda$ is set to $1/n$, a widely-used
value in  literature \cite{SAG}.

\begin{figure}[h]
 \centering
 \includegraphics[width=0.45\textwidth]{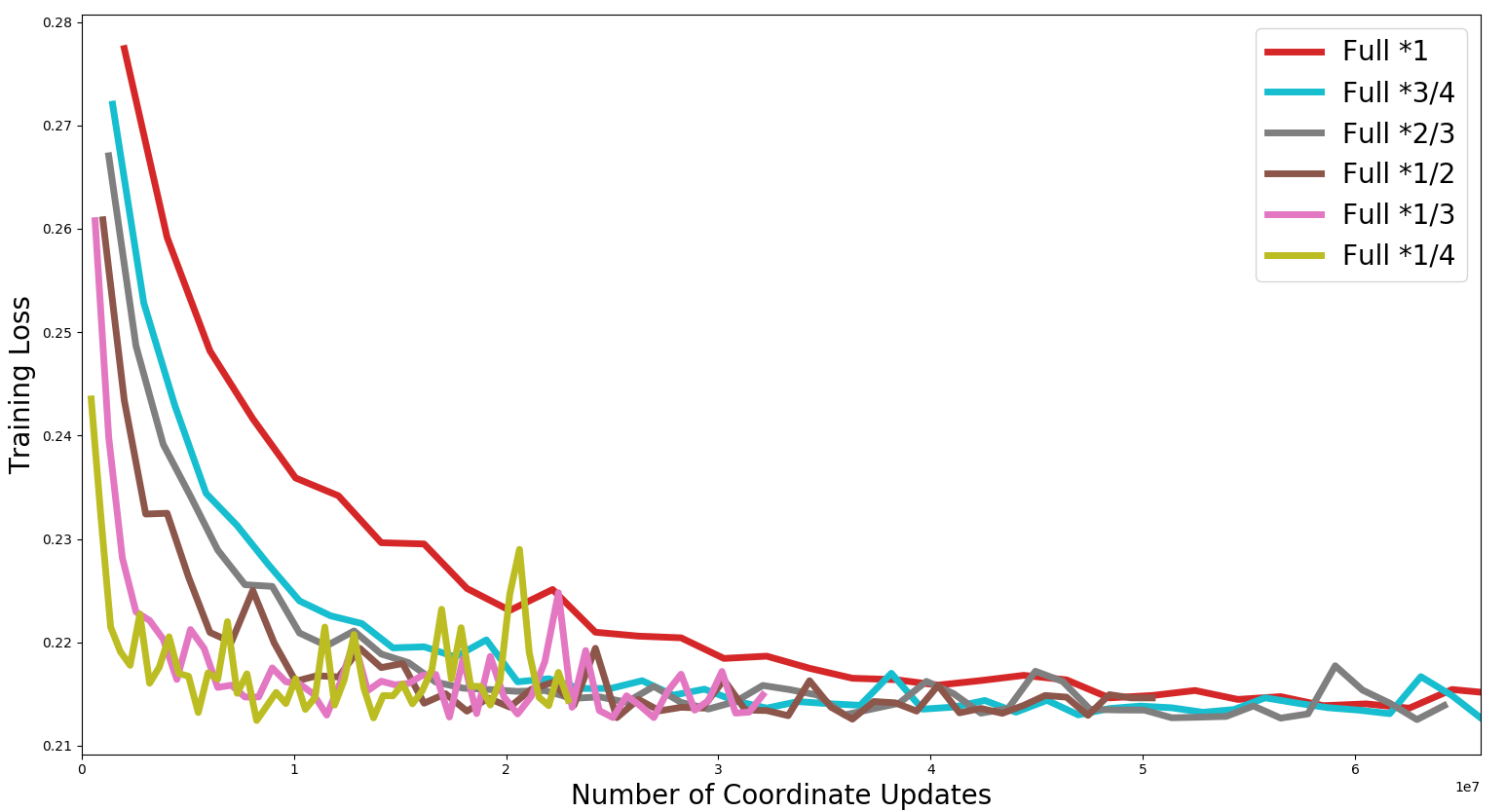}
 \includegraphics[width=0.45\textwidth]{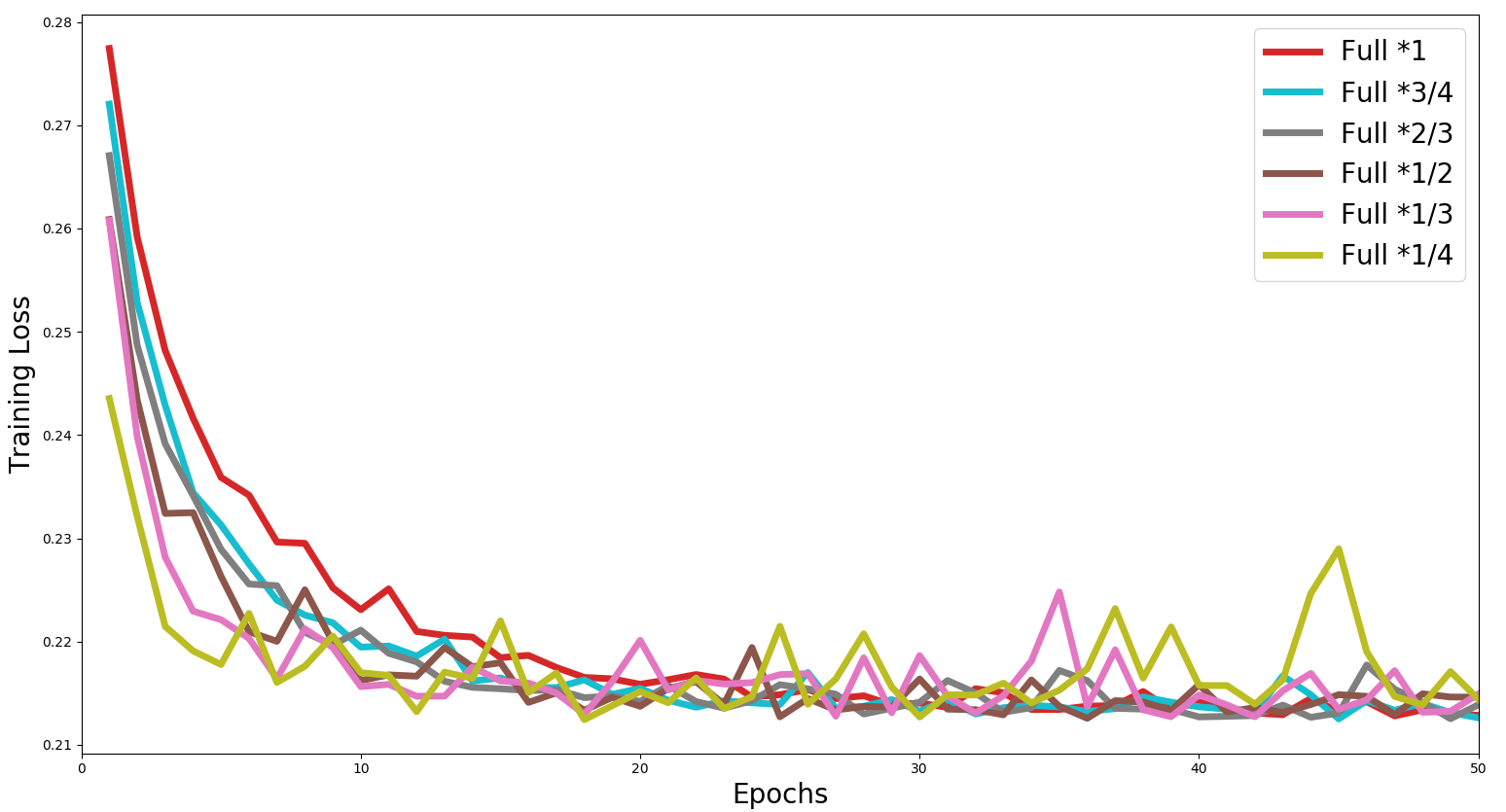}
   \caption{\textit{ijcnn1} for different fraction of non-zero set}
  \label{figure_01_a}
 \end{figure}
 
 \begin{figure}[h]
 \centering
 \includegraphics[width=0.45\textwidth]{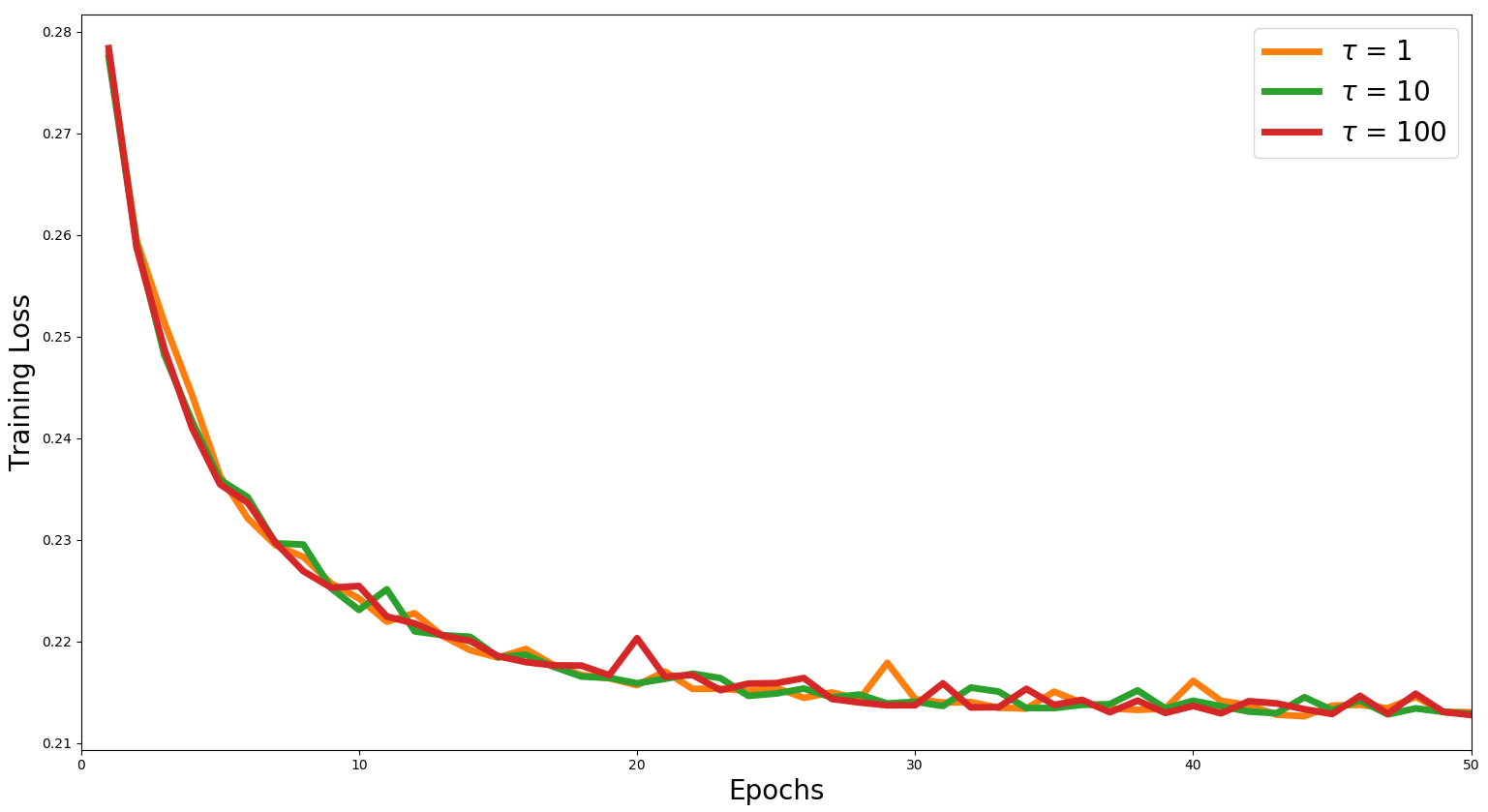} 
   \caption{\textit{ijcnn1} for different $\tau$ with the whole non-zero set}
  \label{figure_01_b}
 \end{figure}
 
We conducted experiments on a single core for Algorithm \ref{HogWildAlgorithm} on two popular datasets \texttt{ijcnn1} ($n = 91, 701$ training data) and \texttt{covtype} ($n = 406,709$ training data) from the LIBSVM\footnote{http://www.csie.ntu.edu.tw/$\sim$cjlin/libsvmtools/datasets/} website. Since we are interested in the expected convergence rate with respect to the number of iterations, respectively number of single position vector updates, we do not need a parallelized multi-core simulation to confirm our analysis. The impact of efficient resource scheduling over multiple cores leads to a  performance improvement complementary to our analysis of (\ref{eqwM}) (which, as discussed, lends itself for an efficient parallelized implementation). We experimented with 10 runs and reported the average results. We choose the step size based on Theorem \ref{theorem:Hogwild_newnew1}, i.e, $\eta_t = \frac{4}{\mu(t+E)}$ and $E = \max\{ 2\tau, \frac{16 L D}{\mu}\}$. For each fraction $v\in \{1,3/4,2/3,1/2,1/3, 1/4\}$ we performed the following experiment: In Algorithm \ref{HogWildAlgorithm}  we  choose each ``filter'' matrix $S^{\xi_t}_{u_t}$ to correspond with a random subset of size $v|D_{\xi_t}|$ of the non-zero positions of $D_{\xi_t}$ (i.e., the support of the gradient corresponding to $\xi_t$). In addition we use $\tau=10$. For the two datasets, Figures \ref{figure_01_a} and \ref{figure_02_a} plot the training loss for each fraction with $\tau=10$. The top plots have $t'$, the number of coordinate updates, for the horizontal axis. The bottom plots have the number of epochs, each epoch counting $n$ iterations, for the horizontal axis. The results show that each fraction shows a sublinear expected convergence rate of $O(1/t')$; the smaller fractions exhibit larger deviations but do seem to converge faster to the minimum solution.

%We plotted different fractions of non-zero set with $\tau = 10$ and compare their train loss based on number of coordinate updates and number of epochs as shown in Figures \ref{figure_01_a} and \ref{figure_02_a}. 

In Figures \ref{figure_01_b} and \ref{figure_02_b}, we show experiments with different values of $\tau \in \{1, 10, 100\}$ where we use the whole non-zero set of gradient positions (i.e., $v=1$) for the update.  Our analysis states that, for $t= 50$ epochs times $n$ iterations per epoch, $\tau$ can be as large as $\sqrt{t\cdot L(t)}=524$ for \texttt{ijcnn1} and $1058$ for \texttt{covtype}. The experiments indeed show that $\tau\leq 100$ has little effect on the expected convergence rate.

\begin{figure}[h]
 \centering
 \includegraphics[width=0.45\textwidth]{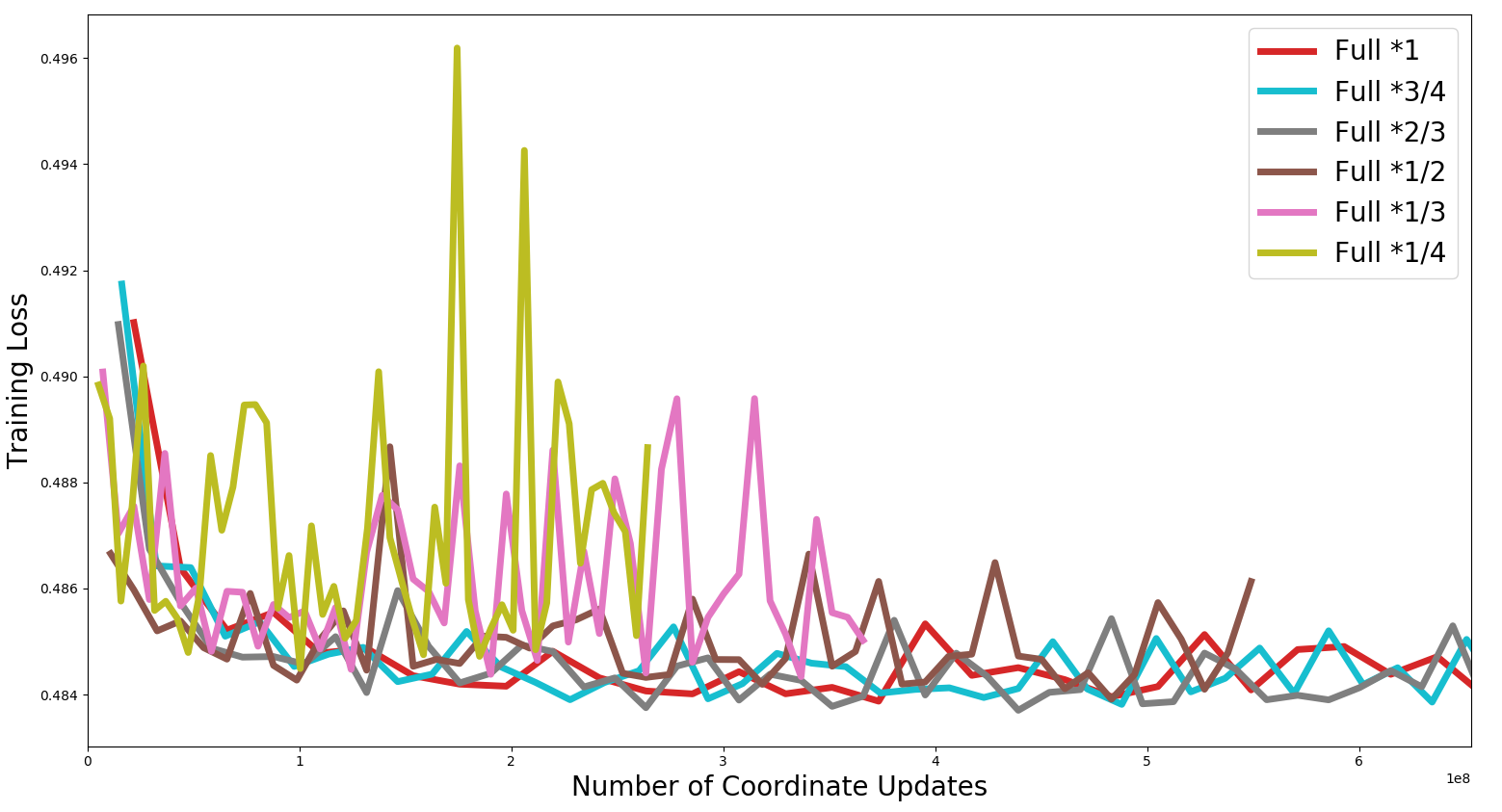}
 \includegraphics[width=0.45\textwidth]{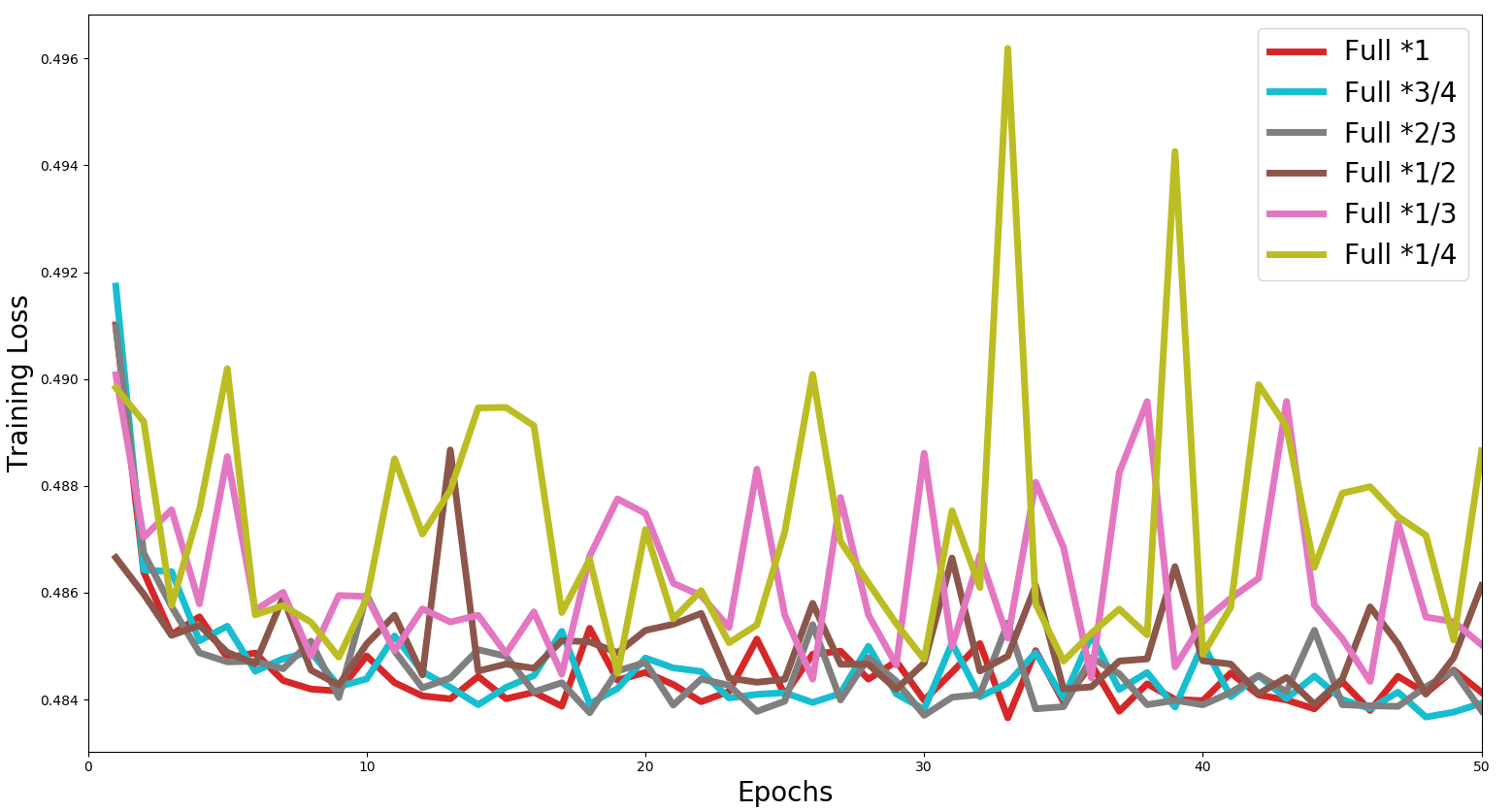}
   \caption{\textit{covtype} for different fraction of non-zero set}
  \label{figure_02_a}
 \end{figure}
 
 \begin{figure}[h]
 \centering
 \includegraphics[width=0.45\textwidth]{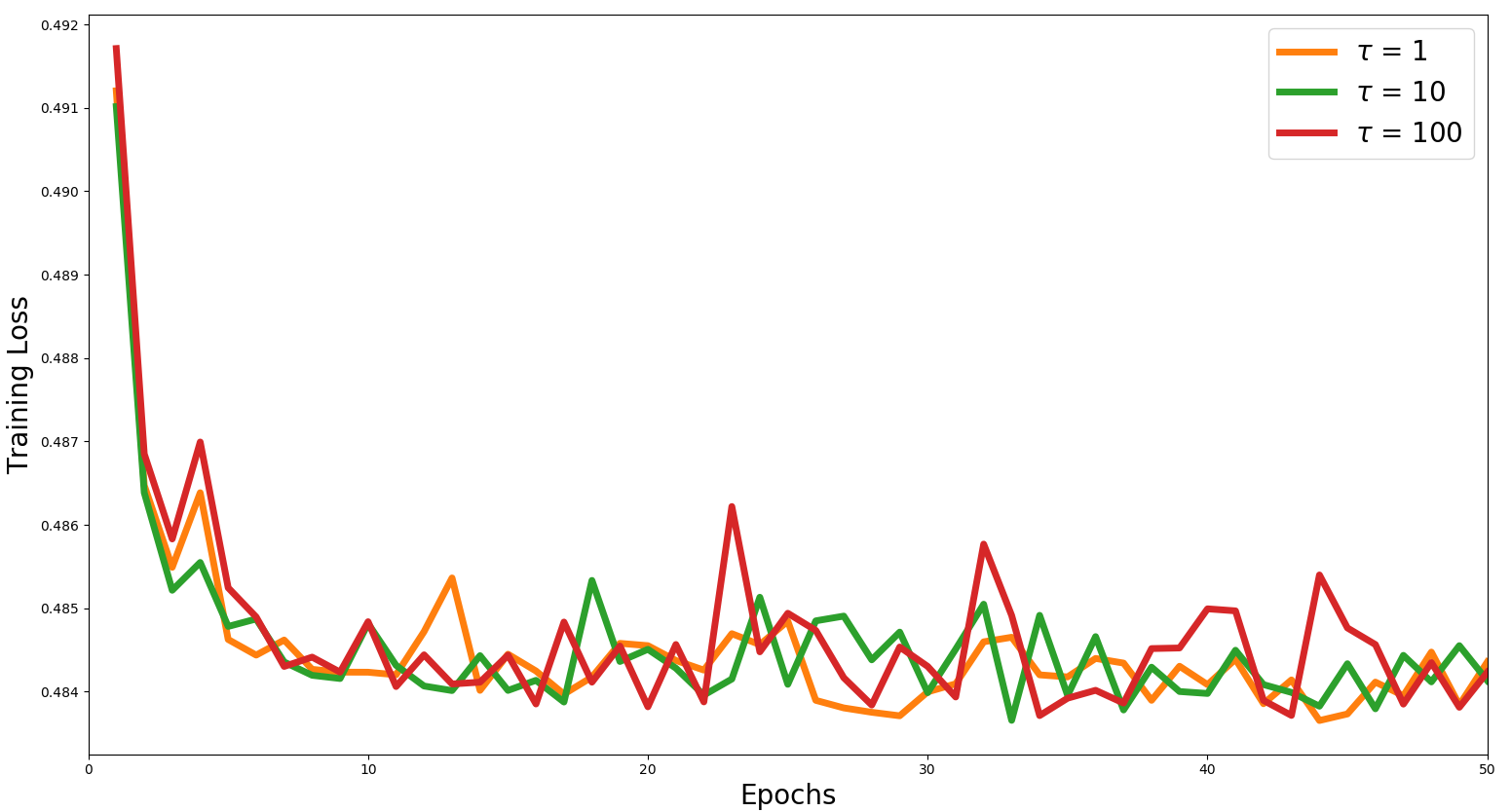} 
   \caption{\textit{covtype} for different $\tau$ with the whole non-zero set}
  \label{figure_02_b}
 \end{figure}

%========================

\section{Conclusion}

We have provided the analysis of stochastic gradient algorithms with diminishing step size in the strongly convex case under
the condition of Lipschitz continuity of the individual function realizations, but without requiring any bounds on
the stochastic gradients. We showed almost sure convergence of SGD and provided sublinear upper bounds for the expected convergence rate of a general recursion which includes Hogwild! for inconsistent reads and writes as a special case. We also provided new intuition which will help understanding convergence as observed in practice.

\section*{Acknowledgement}

The authors would like to thank the reviewers for useful suggestions which helped to improve the exposition in the paper. The authors also would like to thank Francesco Orabona for his valuable comments and suggestions. 

Lam M. Nguyen was partially supported by NSF Grants CCF 16-18717. Phuong Ha Nguyen and Marten van Dijk were supported in part by AFOSR MURI under award number FA9550-14-1-0351. Katya Scheinberg was partially supported by NSF Grants CCF 16-18717 and CCF 17-40796. Martin Tak\'{a}\v{c} was supported by  U.S. National Science Foundation, under award number NSF:CCF:1618717, NSF:CMMI:1663256 and NSF:CCF:1740796.

\bibliography{references}
\bibliographystyle{icml2018}

% \end{document}

\clearpage
\onecolumn
\appendix

\icmltitle{SGD and Hogwild! Convergence Without the Bounded Gradients Assumption \\
			Supplementary Material, ICML 2018}

%\section{Notations}
%
%For the sake of analysis in Section \ref{sec:ASO}, we introduce notations used in Table~\ref{table:1}. 
%
%\begin{table}[h!]
%\centering
%\begin{tabular}{|c c c|} 
% \hline
% Symbols & Definition & Where\\ [0.5ex] 
% \hline\hline
% $N$  & $N=\frac{2}{n}\sum_{i=1}^n\|\nabla f_i (w_*) \|$& Corollary~\ref{lem_bounded_secondmoment_03} \\
% $M$  & $M=\frac{2L}{\mu}$ & Corollary~\ref{lem_bounded_secondmoment_03} \\
% $\mathcal{F}_t$& $\mathcal{F}_t=\sigma(w_1,i_1,\rho_1,\dotsc,i_{t-1},\rho_{t-1})$ & Lemma~\ref{lemma:hogwild_1}  \\
% $a_t$ & $a_t= L^2\eta_t - \frac{L^3M}{2}\eta^2_t$ & Lemma~\ref{lemma:hogwild_1} \\
% $b_t$ & $b_t= \frac{3}{4}\eta_t - \frac{LM}{4}\eta^2_t$ & Lemma~\ref{lemma:hogwild_1}\\  
% $N_{\infty}$ & $N_{\infty} = \max_{1 \leq i \leq n} \| \nabla f_i(w_{*})\|$ & Lemma~\ref{lemma:hogwild_2} \\ 
% $m_{t-1}$ & $m_{t-1}=\sum_{j=1}^{t-1}\| \nabla f_{i_j}(w_{j-\rho_j})-\nabla f_{i_j}(w_{*}) \|^2$&Lemma~\ref{lemma:hogwild_2}  \\
% $E$  & $E \geq \max\{2, 2R, \frac{LM\alpha^2}{2\mu}\}$ and $\frac{E-1}{1+\ln E} \geq \frac{\alpha^316R^2L^3T}{\mu^3 e}$& Lemmas~\ref{lemma:hogwild_3},~\ref{lemma:hogwild_4},~\ref{lemma:hogwild_4b},~\ref{lemma:hogwild_04c}\\
% $\eta_t$  & $\eta_t = \frac{\alpha_t}{\mu (t+E)}, 2 \leq \alpha_t \leq \alpha$ &Lemmas~\ref{lemma:hogwild_3} \\ 
%  [1ex] 
% \hline
%\end{tabular}
%\caption{Notation's Table}
%\label{table:1}
%
%%\todo[inline]{I would use $2 \kappa$ 
%%instead of $M$. $\kappa$ is a standard notation and we do not need to have $M$}
%\end{table}

\section{Review of Useful Theorems}\label{useful}

\begin{lem}[Generalization of the result in \cite{SVRG}]\label{lem_bound_diff_grad}
Let Assumptions \ref{ass_smooth} and \ref{ass_convex} hold. Then, $\forall w \in \mathbb{R}^d$, 
\begin{align}\label{eq:001}
\mathbb{E}[ \| \nabla f(w; \xi) - \nabla f(w_{*};\xi) \|^2 ] \leq 2L [ F(w) - F(w_{*})],   
\end{align}
where $\xi$ is a random variable, and $w_{*} = \arg \min_w F(w)$.  
\end{lem}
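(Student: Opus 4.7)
The plan is to use the standard trick of constructing an auxiliary function per realization $\xi$ that is nonneg and minimized at $w_*$, then apply the well-known ``$L$-smooth plus convex implies $\tfrac{1}{2L}\|\nabla h\|^2 \le h(w)-\min h$'' inequality, and finally take expectation exploiting $\nabla F(w_*)=0$.

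Concretely, for each realization $\xi$ define
\[
h_\xi(w) \;\eqdef\; f(w;\xi) - f(w_*;\xi) - \langle \nabla f(w_*;\xi),\, w - w_*\rangle.
\]
By Assumption \ref{ass_convex}, $h_\xi$ is convex; by Assumption \ref{ass_smooth}, $\nabla h_\xi(w)=\nabla f(w;\xi)-\nabla f(w_*;\xi)$ is $L$-Lipschitz, so $h_\xi$ is $L$-smooth. Moreover $\nabla h_\xi(w_*)=0$ and $h_\xi(w_*)=0$, so $w_*$ is a global minimizer of $h_\xi$.

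Next I would apply $L$-smoothness of $h_\xi$ in the direction of a gradient step, i.e. use \eqref{eq:Lsmooth} with $w$ and $w' = w-\tfrac{1}{L}\nabla h_\xi(w)$ to obtain
\[
h_\xi\!\left(w - \tfrac{1}{L}\nabla h_\xi(w)\right) \;\le\; h_\xi(w) \;-\; \tfrac{1}{2L}\|\nabla h_\xi(w)\|^2.
\]
Since $w_*$ minimizes $h_\xi$, the left-hand side is at least $h_\xi(w_*)=0$, which rearranges to
\[
\tfrac{1}{2L}\|\nabla f(w;\xi) - \nabla f(w_*;\xi)\|^2 \;\le\; h_\xi(w).
\]

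Finally I would take expectation in $\xi$. Using unbiasedness $\mathbb{E}[\nabla f(w_*;\xi)]=\nabla F(w_*)=0$ (the first from the definition of the stochastic gradient, the second from optimality of $w_*$ for the strongly convex $F$), we get
\[
\mathbb{E}[h_\xi(w)] \;=\; F(w) - F(w_*) - \langle \nabla F(w_*),\, w-w_*\rangle \;=\; F(w)-F(w_*),
\]
and multiplying the previous display by $2L$ gives exactly \eqref{eq:001}. There is no real obstacle here; the only mildly delicate point is making sure $\mathbb{E}[\nabla f(w_*;\xi)]=0$, which follows from interchange of expectation and gradient implicit in $\nabla F(w)=\mathbb{E}[\nabla f(w;\xi)]$ combined with $\nabla F(w_*)=0$.
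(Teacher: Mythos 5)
Your proof is correct and is exactly the standard argument behind the cited SVRG result, which the paper itself does not reprove but simply invokes (Lemma~\ref{lem_bound_diff_grad} is stated with a reference to \cite{SVRG}): define $h_\xi(w)=f(w;\xi)-f(w_*;\xi)-\langle\nabla f(w_*;\xi),w-w_*\rangle$, use convexity and $L$-smoothness of $h_\xi$ with a gradient step to get $\tfrac{1}{2L}\|\nabla h_\xi(w)\|^2\le h_\xi(w)$, and take expectations using $\mathbb{E}[\nabla f(w_*;\xi)]=\nabla F(w_*)=0$. No gaps; this matches the intended proof.
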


\begin{lem}[\cite{BertsekasSurvey}]\label{prop_supermartingale}
Let $Y_k$, $Z_k$, and $W_k$, $k = 0,1,\dots$, be three sequences of random variables and let $\{\mathcal{F}_k\}_{k\geq 0}$ be a filtration, that is, $\sigma$-algebras such that $\mathcal{F}_k \subset \mathcal{F}_{k+1}$ for all $k$. Suppose that: 
\begin{itemize}
\item The random variables $Y_k$, $Z_k$, and $W_k$ are nonnegative, and $\mathcal{F}_k$-measurable. 
\item For each $k$, we have $
\mathbb{E}[Y_{k+1} | \mathcal{F}_k] \leq Y_k - Z_k + W_k$. 
\item There holds, w.p.1, 
\begin{gather*}
\sum_{k=0}^{\infty} W_k < \infty. 
\end{gather*}
\end{itemize}
Then, we have, w.p.1,
\begin{gather*}
\sum_{k=0}^{\infty} Z_k < \infty \ \text{and} \ Y_k \to Y \geq 0. 
\end{gather*}
\end{lem}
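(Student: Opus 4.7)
The plan is to prove the Robbins--Siegmund style supermartingale convergence lemma by constructing an auxiliary process that is a genuine supermartingale and then applying Doob's classical almost-sure convergence theorem for supermartingales bounded below. Concretely, I would define
\[
M_k \;:=\; Y_k \;+\; \sum_{j=0}^{k-1} Z_j \;-\; \sum_{j=0}^{k-1} W_j,
\]
which is $\mathcal{F}_k$-measurable because each $Y_j$, $Z_j$, $W_j$ with $j\leq k$ is. A one-line computation using the hypothesis $\mathbb{E}[Y_{k+1}\mid\mathcal{F}_k]\leq Y_k - Z_k + W_k$ gives $\mathbb{E}[M_{k+1}\mid\mathcal{F}_k]\leq M_k$, so $\{M_k\}$ is a supermartingale with respect to $\{\mathcal{F}_k\}$. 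This is the standard ``compensator'' trick: one absorbs the predictable increments $W_k$ (and $-Z_k$) into $M_k$ itself so that the residual process is a supermartingale on the nose.

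The obstacle is that $M_k$ is not nonnegative, so Doob's nonnegative supermartingale convergence theorem does not apply directly. I would handle this by a truncation/stopping-time argument. Since $S:=\sum_{j=0}^{\infty} W_j<\infty$ almost surely, for each fixed $c>0$ the stopping time
\[
\tau_c \;:=\; \inf\Bigl\{ k \;:\; \sum_{j=0}^{k-1} W_j > c \Bigr\}
\]
is a.s.\ finite or $+\infty$, and on $\{\tau_c=\infty\}$ one has $M_k \geq Y_k - c \geq -c$ for all $k$. The stopped process $\widetilde{M}_k := M_{k\wedge \tau_c} + c$ is therefore a nonnegative supermartingale (stopping preserves the supermartingale property, and $\mathbb{E}[\widetilde M_0] = \mathbb{E}[Y_0] + c$ can be assumed finite by an additional standard truncation on $Y_0$, or the argument can be carried out pathwise via the upcrossing inequality). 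Doob's theorem then yields that $\widetilde M_k$ converges almost surely to a finite limit on $\{\tau_c=\infty\}$. Letting $c\to\infty$ and using $\mathbb{P}(\tau_c=\infty)\to 1$ (as $S<\infty$ a.s.), I conclude that $M_k$ itself converges a.s.\ to a finite limit $M_\infty$.

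To finish, I would translate convergence of $M_k$ back into the two conclusions. Because $\sum_{j=0}^{k-1}W_j\to S<\infty$ a.s., the identity
\[
Y_k \;+\; \sum_{j=0}^{k-1} Z_j \;=\; M_k \;+\; \sum_{j=0}^{k-1} W_j
\]
shows that the left-hand side converges a.s.\ to a finite limit. Since both terms on the left are nonnegative and monotone in $k$ (the partial sums of $Z_j$ are nondecreasing, while $Y_k\geq 0$), the only way their sum can converge is for each to converge separately: the monotone increasing sequence $\sum_{j=0}^{k-1} Z_j$ must have a finite limit (hence $\sum_{k=0}^{\infty} Z_k<\infty$ a.s.), and consequently $Y_k$ must converge a.s.\ to some $Y\geq 0$.

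The main technical obstacle is the non-negativity issue for $M_k$; once the stopping/truncation reduction to a nonnegative supermartingale is in place, everything else is bookkeeping. An alternative route avoiding stopping times is to apply Doob's $L^1$-bounded supermartingale convergence theorem directly to $M_k$ after verifying $\sup_k \mathbb{E}[M_k^-]<\infty$ (using $M_k^- \leq \sum_{j=0}^{k-1} W_j$ and monotone convergence to bound $\mathbb{E}\sum W_j$ under the stronger hypothesis $\mathbb{E}[\sum W_j]<\infty$); the stopping-time version above has the virtue of requiring only the a.s.\ summability assumption actually stated in the lemma.
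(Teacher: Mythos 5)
The paper does not actually prove this lemma: it is quoted verbatim from \cite{BertsekasSurvey} in the ``Review of Useful Theorems'' section of the supplementary material and used as a black box, so there is no in-paper argument to compare against. Your proposal reconstructs the standard Robbins--Siegmund proof, and the overall strategy is correct: the compensated process $M_k = Y_k + \sum_{j<k} Z_j - \sum_{j<k} W_j$ is indeed a supermartingale, localization by the summability of $\sum_k W_k$ reduces to a supermartingale bounded below, and the final bookkeeping step (monotonicity of $\sum_{j<k} Z_j$ plus nonnegativity of $Y_k$ forcing each summand to converge separately) is exactly right. Two small repairs are needed. First, your stopping time is off by one: with $\tau_c = \inf\{k : \sum_{j=0}^{k-1} W_j > c\}$ the stopped value $M_{\tau_c}$ involves $\sum_{j=0}^{\tau_c-1} W_j$, which by definition \emph{exceeds} $c$ and may be arbitrarily large, so $M_{k\wedge\tau_c}+c$ is not nonnegative on $\{\tau_c<\infty\}$ and Doob's nonnegative-supermartingale theorem does not apply as written; defining instead $\tau_c = \inf\{k : \sum_{j=0}^{k} W_j > c\}$ guarantees $\sum_{j=0}^{k-1} W_j \le c$ for all $k \le \tau_c$ and hence $M_{k\wedge\tau_c} \ge -c$ everywhere, which is what the argument needs (this $\tau_c$ is still a stopping time since $W_k$ is $\mathcal{F}_k$-measurable). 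Second, the integrability of $Y_0$ (needed so that the stopped process is a genuine $L^1$ supermartingale) is only gestured at; since the lemma as stated assumes nothing beyond nonnegativity, you should either add the standard localization on $\{Y_0 \le m\}$ or say explicitly that conditional expectations of nonnegative variables are taken in the extended sense. With those two fixes the proof is complete and is the canonical argument for this result.
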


\section{Proofs of Lemmas \ref{lem_bounded_secondmoment_04} and \ref{lem_bounded_secondmoment_04_new}}

\subsection{Proof of Lemma \ref{lem_bounded_secondmoment_04}}
\textbf{Lemma \ref{lem_bounded_secondmoment_04}}. \textit{Let Assumptions \ref{ass_smooth} and \ref{ass_convex} hold. Then, for $\forall w \in \mathbb{R}^d$,} 
\begin{gather*}
\mathbb{E}[\|\nabla f(w; \xi)\|^2] \leq  4 L [ F(w) - F(w_{*}) ] + N,
\end{gather*}
\textit{where $N = 2 \mathbb{E}[ \|\nabla f(w_{*}; \xi)\|^2 ]$; $\xi$ is a random variable, and $w_{*} = \arg \min_w F(w)$.}

\begin{proof}
Note that 
\begin{gather*}
\|a\|^2 = \|a - b + b\|^2 \leq 2\|a - b\|^2 + 2\|b\|^2, \tagthis{\label{eq_aaa001}} \\
\Rightarrow \frac{1}{2}\|a\|^2 - \|b\|^2 \leq \|a - b\|^2. \tagthis{\label{eq_aaa002}}
\end{gather*}

%From \eqref{eq:001}, we have
%\begin{align*}
%\mathbb{E}[\| \nabla f(w; \xi) - \nabla f(w_{*}; \xi) \|^2 ] \overset{\eqref{eq:001}}{\leq} 2 L [ F(w) - F(w_{*})]. \tagthis{\label{eq_aaa004}}
%\end{align*}

Hence, 
\begin{align*}
\frac{1}{2} \mathbb{E}[ \| \nabla f(w; \xi) \|^2 ] - \mathbb{E}[ \| \nabla f(w_{*}; \xi) \|^2 ] &= \mathbb{E} \left[ \frac{1}{2} \| \nabla f(w; \xi) \|^2 - \| \nabla f(w_{*}; \xi) \|^2 \right] \\ 
& \overset{\eqref{eq_aaa002}}{\leq} \mathbb{E}[\| \nabla f(w; \xi) - \nabla f(w_{*}; \xi) \|^2 ] \\
& \overset{\eqref{eq:001}}{\leq} 2 L [ F(w) - F(w_{*})] \tagthis{\label{eq_aaa005}}
\end{align*}

Therefore,
\begin{align*}
\mathbb{E}[\| \nabla f(w; \xi) \|^2] &\overset{\eqref{eq_aaa001}\eqref{eq_aaa005}}{\leq} 4 L [ F(w) - F(w_{*})] + 2 \mathbb{E}[ \| \nabla f(w_{*}; \xi) \|^2 ]. 
\qedhere 
\end{align*}
\end{proof}

\subsection{Proof of Lemma \ref{lem_bounded_secondmoment_04_new}}
\textbf{Lemma \ref{lem_bounded_secondmoment_04_new}}. \textit{Let Assumptions \ref{ass_stronglyconvex} and \ref{ass_smooth} hold. Then, for $\forall w \in \mathbb{R}^d$, 
\begin{gather*}
\mathbb{E}\|\nabla f(w; \xi)\|^2 \leq  4L \kappa [ F(w) - F(w_{*}) ] + N,
\end{gather*}
where $\kappa = \frac{L}{\mu}$ and $N = 2 \mathbb{E}[ \|\nabla f(w_{*}; \xi)\|^2 ]$; $\xi$ is a random variable, and $w_{*} = \arg \min_w F(w)$.}

\begin{proof}
Analogous to the proof of Lemma \ref{lem_bounded_secondmoment_04}, we have

Hence, 
\begin{align*}
\frac{1}{2} \mathbb{E}[ \| \nabla f(w; \xi) \|^2 ] - \mathbb{E}[ \| \nabla f(w_{*}; \xi) \|^2 ] &= \mathbb{E} \left[ \frac{1}{2} \| \nabla f(w; \xi) \|^2 - \| \nabla f(w_{*}; \xi) \|^2 \right] \\ 
& \overset{\eqref{eq_aaa002}}{\leq} \mathbb{E}[\| \nabla f(w; \xi) - \nabla f(w_{*}; \xi) \|^2 ] \\
& \overset{\eqref{eq:Lsmooth_basic}}{\leq} L^2 \| w - w_{*} \|^2 \\ 
& \overset{\eqref{eq:stronglyconvex_00}}{\leq} \frac{2L^2}{\mu}[F(w) - F(w_{*})] = 2 L \kappa [F(w) - F(w_{*})]. 
 \tagthis{\label{eq_aaa005_new}}
\end{align*}

Therefore,
\begin{align*}
\mathbb{E}[\| \nabla f(w; \xi) \|^2] \overset{\eqref{eq_aaa001}\eqref{eq_aaa005_new}}{\leq} 4 L \kappa [ F(w) - F(w_{*})] + 2 \mathbb{E}[ \| \nabla f(w_{*}; \xi) \|^2 ]. 
\end{align*}
\end{proof}

%\section{Proofs}\label{proof}

\section{Analysis for Algorithm \ref{sgd_algorithm}}\label{sec_analysis_sgd}

%\subsection{Proof of Lemma \ref{lem_bounded_secondmoment_04}}

In this Section, we provide the analysis of Algorithm \ref{sgd_algorithm} under Assumptions \ref{ass_stronglyconvex}, \ref{ass_smooth}, and \ref{ass_convex}.

We note that if $\{\xi_i\}_{i \geq 0}$ are i.i.d. random variables, then $\mathbb{E}[ \| \nabla f(w_{*}; \xi_0) \|^2 ] = \dots = \mathbb{E}[ \| \nabla f(w_{*}; \xi_t) \|^2 ]$. We have the following results for Algorithm \ref{sgd_algorithm}. 

%\subsection{Proof of Theorem \ref{thm_general_02_new_02}}

\textbf{Theorem \ref{thm_general_02_new_02}} (Sufficient condition for almost sure convergence). \textit{Let Assumptions \ref{ass_stronglyconvex},   \ref{ass_smooth} and \ref{ass_convex} hold. Consider Algorithm \ref{sgd_algorithm} with a stepsize sequence such that
\begin{align*}
0 < \eta_t \leq \frac{1}{2 L} \ , \ \sum_{t=0}^{\infty} \eta_t = \infty \ \text{and} \ \sum_{t=0}^{\infty} \eta_t^2 < \infty. 
\end{align*}
Then, the following holds w.p.1 (almost surely)
\begin{align*}
\| w_{t} - w_{*} \|^2 \to 0. 
\end{align*}}
\begin{proof}
Let $\mathcal{F}_{t} = \sigma(w_{0},\xi_{0},\dots,\xi_{t-1})$ be the $\sigma$-algebra generated by $w_{0},\xi_{0},\dots,\xi_{t-1}$, i.e., $\mathcal{F}_{t}$ contains all the information of $w_{0},\dots,w_{t}$. Note that $\mathbb{E}[\nabla f(w_{t}; \xi_t) | \mathcal{F}_{t}] = \nabla F(w_{t})$. By Lemma \ref{lem_bounded_secondmoment_04}, we have
\begin{gather*}
\mathbb{E}[\|\nabla f(w_{t}; \xi_t)\|^2 | \mathcal{F}_{t} ] \leq  4 L [ F(w_{t}) - F(w_{*}) ] + N, \tagthis \label{ineq:bounded_lemma3_new02}
\end{gather*}
where $N = 2 \mathbb{E}[ \| \nabla f(w_{*}; \xi_0) \|^2 ] = \dots = 2 \mathbb{E}[ \| \nabla f(w_{*}; \xi_t) \|^2 ]$ since $\{\xi_i\}_{i \geq 0}$ are i.i.d. random variables. Note that $w_{t+1} = w_{t} - \eta_t \nabla f(w_{t};\xi_t)$. Hence,
\begin{align*}
\mathbb{E}[\| w_{t+1} - w_{*} \|^2 | \mathcal{F}_{t}] & = \mathbb{E}[ \| w_{t} - \eta_t \nabla f(w_{t};\xi_t) - w_{*} \|^2 | \mathcal{F}_{t}] \\
& = \| w_{t} - w_{*} \|^2 - 2 \eta_t \langle \nabla F(w_{t})  , (w_{t} - w_{*}) \rangle + \eta_t^2 \mathbb{E}[\|\nabla f(w_{t}; \xi_t)\|^2 | \mathcal{F}_{t} ] \\
& \overset{\eqref{eq:stronglyconvex_00}\eqref{ineq:bounded_lemma3_new02}}{\leq} \| w_{t} - w_{*} \|^2 - \mu \eta_t \| w_{t} - w_{*} \|^2 - 2 \eta_t [ F(w_{t}) - F(w_{*}) ] + 4 L \eta_t^2 [ F(w_{t}) - F(w_{*}) ]  + \eta_t^2 N \\
& = \| w_{t} - w_{*} \|^2 - \mu \eta_t \| w_{t} - w_{*} \|^2 - 2 \eta_t ( 1 - 2 L \eta_t) [ F(w_{t}) - F(w_{*}) ]  + \eta_t^2 N \\
& \leq \| w_{t} - w_{*} \|^2 - \mu \eta_t \| w_{t} - w_{*} \|^2  + \eta_t^2 N.
\end{align*}
The last inequality follows since $0 < \eta_t \leq \frac{1}{2L}$. Therefore,
\begin{align*}
\mathbb{E}[\| w_{t+1} - w_{*} \|^2 | \mathcal{F}_{t}]  \leq \| w_{t} - w_{*} \|^2 - \mu \eta_t \| w_{t} - w_{*} \|^2  + \eta_t^2 N. \tagthis \label{main_ineq_sgd_new02}
\end{align*}
Since $\sum_{t=0}^{\infty} \eta_t^2 N < \infty$, 
we could apply Lemma \ref{prop_supermartingale}. Then, we have w.p.1,
\begin{gather*}
\| w_{t} - w_{*} \|^2 \to W \geq 0, \\ \text{and} \
\sum_{t=0}^{\infty} \mu \eta_t \| w_{t} - w_{*} \|^2 < \infty. 
\end{gather*}

We want to show that $\| w_{t} - w_{*} \|^2 \to 0$, w.p.1. Proving by contradiction, we assume that there exist $\epsilon > 0$ and $t_0$, s.t. $\| w_{t} - w_{*} \|^2 \geq \epsilon$ for $\forall t \geq t_0$. Hence, 
\begin{align*}
\sum_{t=0}^{\infty} \mu \eta_t \| w_{t} - w_{*} \|^2 \geq \mu \epsilon \sum_{t=0}^{\infty} \eta_t = \infty. 
\end{align*}
This is a contradiction. Therefore, $\|w_{t} - w_{*}\|^2 \to 0$ w.p.1. 
%Therefore, there exists a ball with center at $w_{*}$ and radius $$r = \max\{\|w_{0} - w_{*}\|,\dots,\|w_{t} - w_{*}\|\} < \infty$$ containing all iterations $w_{0},\dots,w_{t}$ w.p.1. 
\end{proof}

\textbf{Theorem \ref{thm_res_sublinear_new_02}}. \textit{Let Assumptions \ref{ass_stronglyconvex}, \ref{ass_smooth} and \ref{ass_convex} hold. Let $E = \frac{2\alpha L}{\mu}$ with $\alpha=2$. Consider Algorithm \ref{sgd_algorithm} with a stepsize sequence such that $\eta_t = \frac{\alpha}{\mu(t+E)} \leq \eta_0=\frac{1}{2L}$. The expectation} $\mathbb{E}[\|w_{t} - w_{*}\|^2]$ \textit{is at most}
$$ \frac{4\alpha^2 N}{\mu^2} 
\frac{1 }{(t-T+E)} $$
\textit{for} $t\geq T =\frac{4L}{\mu}\max \{ \frac{L\mu}{N} \|w_{0} - w_{*}\|^2, 1\} - \frac{4L}{\mu}$.

\begin{proof}
Using the beginning of the proof of Theorem \ref{thm_general_02_new_02}, taking the expectation to \eqref{main_ineq_sgd_new02}, with $0 < \eta_t \leq \frac{1}{2L}$, we have
\begin{align*}
\mathbb{E}[\| w_{t+1} - w_{*} \|^2 ]  \leq (1 - \mu \eta_t) \mathbb{E}[ \| w_{t} - w_{*} \|^2]  + \eta_t^2 N.   
\end{align*} 

We first show that
\begin{equation} \mathbb{E}[\|w_{t} - w_{*}\|^2]\leq \frac{N}{\mu^2} G
\frac{1 }{(t+E)}, \label{eqGGG} \end{equation}
where 
$G=\max\{I,J\}$, and 
\begin{align*}
I &= \frac{E \mu^2}{N} \mathbb{E}[\|w_{0} - w_{*}\|^2]>0, \\
J &= \frac{\alpha^2}{\alpha - 1}>0.
\end{align*}

We use mathematical induction to prove (\ref{eqGGG}) (this trick is based on the idea from \cite{bottou2016optimization}). Let $t = 0$, we have 
\begin{align*}
\mathbb{E}[\|w_{0} - w_{*}\|^2] %= \|w_{0} - w_{*}\|^2 
\leq \frac{NG}{\mu^2 E},  
\end{align*}
%we have $\eta_0 = \frac{\alpha}{\mu E}$, so
%\begin{align*}
%\mathbb{E}[\|w_{1} - w_{*}\|^2]  \leq \left(1 - \frac{\alpha}{E} \right)\|w_{0} - w_{*}\|^2 + \frac{\alpha^2 N}{\mu^2 E^2}  \leq \frac{NG}{\mu^2 (E+1)},  
%\end{align*}
which is obviously true since 
$
G \geq \frac{E\mu^2}{N} \|w_{0} - w_{*}\|^2.
$

Suppose it is true for $t$, we need to show that it is also true for $t+1$. We have
\begin{align*}
\mathbb{E}[\|w_{t+1} - w_{*}\|^2]  & \leq \left(1 - \frac{\alpha}{t+E} \right) \frac{NG}{\mu^2(t+E)} + \frac{\alpha^2 N}{\mu^2(t+E)^2} \\
& = \left(\frac{t +E - \alpha}{\mu^2(t+E)^2} \right) NG + \frac{\alpha^2N}{\mu^2(t+E)^2} \\
& = \left(\frac{t +E - 1}{\mu^2(t+E)^2} \right) NG  - \left(\frac{\alpha - 1}{\mu^2(t+E)^2} \right) NG + \frac{\alpha^2 N}{\mu^2(t+E)^2}.
\end{align*}
Since
$
G \geq \frac{\alpha^2}{\alpha - 1},
$
$$ - \left(\frac{\alpha - 1}{\mu^2(t+E)^2} \right) NG + \frac{\alpha^2 N}{\mu^2(t+E)^2}\leq 0.$$
This implies
\begin{align*}
\mathbb{E}[\|w_{t+1} - w_{*}\|^2]  & \leq \left(\frac{t+E - 1}{\mu^2(t+E)^2} \right) NG \\
& = \left(\frac{(t+E)^2 - 1}{(t+E)^2} \right) \frac{NG}{\mu^2(t +E+ 1)} \\
& \leq\frac{NG}{\mu^2(t +E+ 1)}. 
\end{align*}
This proves (\ref{eqGGG}) by induction in $t$.

Notice that the induction proof of (\ref{eqGGG}) holds more generally for $E\geq \frac{2\alpha L}{\mu}$ with $\alpha>1$ (this is sufficient for showing $\eta_t \leq \frac{1}{2L}$. In this more general interpretation we can see that the convergence rate is minimized for $I$ minimal, i.e., $E=\frac{2\alpha L}{\mu}$ and for this reason we have fixed $E$ as such in the theorem statement.

Notice that 
$$G=\max\{I,J\} = \max \{ \frac{2 \alpha L \mu}{N} \mathbb{E}[\|w_{0} - w_{*}\|^2], \frac{\alpha^2}{\alpha - 1} \}.$$
We choose $\alpha=2$ such that $\eta_t$ only depends on known parameters $\mu$ and $L$. For this $\alpha$ we obtain
$$G = 4 \max \{\frac{L \mu}{N} \mathbb{E}[\|w_{0} - w_{*}\|^2], 1\}.$$

For
$T =\frac{4L}{\mu}\max \{ \frac{L\mu}{N} \mathbb{E}[\|w_{0} - w_{*}\|^2], 1\} - \frac{4L}{\mu}$, 
 we have that
according to (\ref{eqGGG})
\begin{eqnarray}
 \frac{L \mu}{N} \mathbb{E}[\|w_{T} - w_{*}\|^2]&\leq& \frac{L \mu}{N}\frac{N}{\mu^2} 
\frac{G }{(T+E)} \nonumber \\
&=& \frac{L}{\mu} \frac{ 4 \max \{\frac{L \mu}{N} \mathbb{E}[\|w_{0} - w_{*}\|^2], 1\}}
{\frac{4L}{\mu}\max \{\frac{L\mu}{N} \mathbb{E}[\|w_{0} - w_{*}\|^2], 1\} } =
%\frac{  \max \{\frac{L \mu}{N} \mathbb{E}[\|w_{0} - w_{*}\|^2], 1\}}
%{\frac{ L\mu}{N} \mathbb{E}[\|w_{0} - w_{*}\|^2]}\leq 
1. \label{eqTT} \end{eqnarray}
Applying (\ref{eqGGG})with $w_T$ as starting point rather than $w_0$ gives, for $t\geq \max\{T,0\}$,
$$\mathbb{E}[\|w_{t} - w_{*}\|^2]\leq \frac{N}{\mu^2} G
\frac{1 }{(t-T+E)},$$
where $G$ is now equal to
$$ 4 \max \{ \frac{L \mu}{N} \mathbb{E}[\|w_{T} - w_{*}\|^2], 1\},$$
which equals $4$, see (\ref{eqTT}). For any given $w_0$, we prove the theorem. 
\end{proof}

%\input{Appendix_hogwild_2.tex}

%\input{hogwild.tex}
 
%\input{Appendix_hogwild_3.tex}  

%\input{sparseHogWild.tex}

%\input{hogwild_UCONN_New.tex} 
%\twocolumn

%\clearpage
%\onecolumn

\section{Analysis for Algorithm~\ref{HogWildAlgorithm}}
\label{sec:Hogwild_insconsistent_read_write}

\subsection{Recurrence and Notation}

We introduce the following notation: For each $\xi$, we define $D_\xi \subseteq \{1,\ldots, d\}$ as the set of possible non-zero positions in a vector of the form $\nabla f(w;\xi)$ for some $w$. We consider a fixed mapping from $u\in U$ to subsets $S^{\xi}_u\subseteq D_\xi$ for each possible $\xi$. In our notation we also let $D_\xi$ represent the diagonal $d\times d$ matrix with ones exactly at the positions corresponding to $D_\xi$ and with zeroes elsewhere. Similarly, $S^{\xi}_u$ also denotes a diagonal matrix with ones at the positions corresponding to $D_\xi$.

We will use a probability distribution $p_\xi(u)$ to indicate how to randomly select a matrix $S^\xi_u$. We choose the matrices $S^\xi_u$ and distribution $p_\xi(u)$ so that there exist $d_\xi$ such that
\begin{equation} d_\xi \mathbb{E}[S^\xi_u | \xi] = D_\xi, \label{eq:Sexp} \end{equation}
where the expectation is over $p_\xi(u)$.

We will restrict ourselves to choosing {\em non-empty} sets  $S^\xi_u$ that partition $D_\xi$ in $D$ approximately equally sized sets together with uniform distributions $p_\xi(u)$ for some fixed $D$. So, if $D\leq |D_\xi|$, then sets have sizes $\lfloor |D_\xi|/D \rfloor$ and $\lceil |D_\xi|/D \rceil$. For the special case $D>|D_\xi|$ we have exactly $|D_\xi|$ singleton sets of size $1$ (in our definition we only use non-empty sets).

For example, for $D=\bar{\Delta}$, where 
$$ \bar{\Delta} = \max_\xi \{ |D_\xi|\}$$
represents the maximum number of non-zero positions in any gradient computation $f(w;\xi)$, we have that for all $\xi$, there are exactly $|D_\xi|$ singleton sets $S^\xi_u$ representing each of the elements in $D_\xi$. Since  $p_\xi(u)= 1/|D_\xi|$ is the uniform distribution, we have $\mathbb{E}[S^\xi_u | \xi] = D_\xi / |D_\xi|$, hence, $d_\xi = |D_\xi|$.
As another example at the other extreme, for $D=1$, we have exactly one set $S^\xi_1=D_\xi$ for each $\xi$. Now $p_\xi(1)=1$ and we have $d_\xi=1$.

We define the parameter
$$ \bar{\Delta}_D \eqdef D \cdot \mathbb{E}[\lceil |D_\xi|/D \rceil],$$
where the expectation is over $\xi$.
We use $\bar{\Delta}_D$ in the leading asymptotic term for the convergence rate in our main theorem. We observe that
$$ \bar{\Delta}_D \leq \mathbb{E}[|D_\xi|] + D-1$$
and
$\bar{\Delta}_D \leq \bar{\Delta}$ with equality for $D=\bar{\Delta}$.

For completeness we define
\begin{equation}
\Delta \eqdef 
\max_{i}
\Prob \left(   i \in  D_\xi  \right). \nonumber
\end{equation}
Let us remark, that $\Delta \in (0,1]$ measures the probability of collision.
Small $\Delta$ means that 
there is a small chance that the support of two random realizations of $\nabla f(w;\xi)$
will have an intersection.
On the other hand, $\Delta = 1$ means that almost surely, the support of two stochastic gradients will have non-empty intersection.

With this definition of $\Delta$ it is an easy exercise to show that 
for iid $\xi_1$ and $\xi_2$ in a finite-sum setting (i.e., $\xi_i$ and $\xi_2$ can only take on a finite set of possible values) we have
\begin{align*}
&\Exp[|\ve{ \nabla f(w_1; \xi_1) }{\nabla f(w_2; \xi_2)}|] 
\\& \leq 
\frac{\sqrt \Delta}2
\left(
\Exp[\|  \nabla f(w_1; \xi_1) \|^2] + \Exp[\|\nabla f(w_2; \xi_2)\|^2]
\right)
\tagthis \label{Masdfasdfasfa}
\end{align*}
(see Proposition 10 in \cite{Leblond2018}).
We notice that in the non-finite sum setting we can use the property that for any two vectors $a$ and $b$, $\langle a, b\rangle \leq (\|a\|^2 + \|b\|^2)/2$ and this proves (\ref{Masdfasdfasfa}) with $\Delta$ set to $\Delta=1$. In our asymptotic analysis of the convergence rate, we will show how $\Delta$ plays a role in non-leading terms -- this, with respect to the leading term, it will not matter whether we use $\Delta=1$ or $\Delta$ equal the probability of collision (in the finite sum case).

%\todo[inline]{We can explain what this $\Delta$ is in a finite-sum setting, and maybe linking to \cite{Leblond2018}?????}

%We will assume two constants $D$ and $K$ with 
%$$ d_\xi \leq D \mbox{ and } |S^\xi_u|\leq K$$
%for all $\xi$ and $u$.

%For example, we can chose each set $S^\xi_u$ to be a singleton set with $D_\xi = \cup_u S^\xi_u$ and let $p_\xi(u)$ be the uniform distribution. This gives $K=1$ and $d_\xi = |D_\xi|$ implying $D$ is the maximum number $\Delta$ of non-zero positions in a gradient computation $f(w;\xi)$. At the other extreme we have $S^\xi_1=D_\xi$ with $p_\xi(1)=1$. In this case $|S^\xi_1|=|D_\xi|$ implying that $K$ is the maximum number $\Delta$ of non-zero positions in a gradient computation $f(w;\xi)$. Now $d_\xi=1$, hence $D=1$.

%If we partition $D_\xi$ into $\lceil |D_\xi|/K\rceil$ subsets $S^\xi_u$ of at most size $K$ and again let $p_\xi(u)$ be the uniform distribution, then $d_\xi= \lceil |D_\xi|/K\rceil$ and $D$ is equal to the number $\Delta$ of non-zero positions in a gradient computation $f(w;\xi)$ divided by $K$. This shows that we can realize
%$$ D =  \lceil \Delta/K\rceil $$
%for any $K$.

%\subsection{Recursion}

We have
\begin{equation}
 w_{t+1} = w_t - \eta_t d_{\xi_t}  S^{\xi_t}_{u_t} \nabla f(\hat{w}_t;\xi_t),\label{eqw}
 \end{equation}
where  $\hat{w}_t$ represents the vector used in computing the gradient $\nabla f(\hat{w}_t;\xi_t)$ and whose entries have been read (one by one)  from  an aggregate of a mix of  previous updates that led to $w_{j}$, $j\leq t$.
% where $\hat{w}_t$ represents the vector used in computing the gradient and whose positions have been read from a mix of  previous updates that led to $w_{j}$, $j\leq t$. 
Here, we assume that
\begin{itemize}
\item updating/writing to vector positions is atomic, reading vector positions is atomic, and
\item there exists a ``delay'' $\tau$ such that, for all $t$, vector $\hat{w}_t$ includes all the updates up to and including those made during the $(t-\tau)$-th iteration (where (\ref{eqw}) defines the $(t+1)$-st iteration).
\end{itemize}
Notice that we do {\bf not assume consistent reads and writes of vector positions}. We only assume that up to a ``delay'' $\tau$ all writes/updates are included in the values of positions that are being read.

According to our definition of $\tau$, in (\ref{eqw}) vector $\hat{w}_t$ represents an inconsistent read with entries that contain all of the updates made during the $1$st to $(t-\tau)$-th iteration. Furthermore each entry in $\hat{w}_t$ includes some of the updates made during the $(t-\tau+1)$-th iteration up to $t$-th iteration. Each entry includes its own subset of updates because writes are inconsistent.  We model this by ``masks'' $\Sigma_{t,j}$ for $t-\tau\leq j\leq t-1$. A mask $\Sigma_{t,j}$ is a diagonal 0/1-matrix with the 1s expressing which of the entry updates made in the $(j+1)$-th iteration are included in $\hat{w}_t$.
That is,
 \begin{equation}
\label{eq:what_wrhot_1}
\hat{w}_t = w_{t-\tau} - \sum_{j=t-\tau}^{t-1} \eta_j d_{\xi_j} \Sigma_{t,j}  S^{\xi_j}_{u_j} \nabla  f(\hat{w}_j;\xi_j)  .
\end{equation}

%, i.e., the last read positions stem from some $w_i$ with $j\leq t-1$). Let $G_{t,i}$ represent the diagonal matrix with ones corresponding to positions $p$ in $\hat{w}_t$ whose values correspond to $w_{i}$ but not to $w_{i-1}$. In other words the value of $\hat{w}_t$ for such a  position $p$ must correspond to the value of $w_{i}$ at position $p$ but not the value of $w_{i-1}$. This means that, for $t-\rho_t+1 \leq i\leq t$,
%$$ G_{t,i} \hat{w}_t = G_{t,j} [w_{t-\rho_t} - \sum_{j=t-\rho_t}^{i-1} \eta_j d_{\xi_j}  S^{\xi_j}_{u_j} \nabla f(\hat{w}_j;\xi_j) ]$$
%and since the reading started at logical time $t-\rho_t$
%$$ (\sum_{i=1}^{t-\rho_t} G_{t,i} ) \hat{w}_t = (\sum_{i=1}^{t-\rho_t} G_{t,i} ) w_{t-\rho_t} .$$
%Also, notice that the sum of all matrices $G_{t,i}$ is equal to the identity matrix (the matrices correspond to a partition of the set $\{1,\ldots, d\}$).
%Therefore, adding all equations yields
%\begin{eqnarray*}
% \hat{w}_t &=& w_{t-\rho_t} - \sum_{i=t-\rho_t+1}^t  G_{t,j} \sum_{j=t-\rho_t}^{i-1} \eta_j d_{\xi_j}  S^{\xi_j}_{u_j} \nabla  f(\hat{w}_j;\xi_j)  \\
% &=& w_{t-\rho_t} - \sum_{j=t-\rho_t}^{t-1}(\sum_{i=j+1}^t G_{t,j}) \eta_j d_{\xi_j}  S^{\xi_j}_{u_j}  \nabla f(\hat{w}_j;\xi_j)  .
% \end{eqnarray*}
% We define
% $$ \Sigma_{t,j} = \sum_{i=j+1}^t G_{t,j}$$
% and conclude
% \begin{equation}
%\label{eq:what_wrhot_1}
%\hat{w}_t = w_{t-\rho_t} - \sum_{j=t-\rho_t}^{t-1} \eta_j d_{\xi_j} \Sigma_{t,j}  S^{\xi_j}_{u_j} \nabla  f(\hat{w}_j;\xi_j)  .
%\end{equation}

Notice that the recursion (\ref{eqw}) implies
\begin{equation}
\label{eq:wt_wrhot_1}
w_t = w_{t-\tau} - \sum_{j=t-\tau}^{t-1} \eta_j d_{\xi_j}  S^{\xi_j}_{u_j} \nabla  f(\hat{w}_j;\xi_j) .
\end{equation}
By combining~\eqref{eq:wt_wrhot_1} and~\eqref{eq:what_wrhot_1} we obtain
\begin{equation}
\label{eq:wt_what_2}
w_t-\hat{w}_t = - \sum_{j=t-\tau}^{t-1}  \eta_j d_{\xi_j} (I-\Sigma_{t,j})  S^{\xi_j}_{u_j}  \nabla f(\hat{w}_j;\xi_j) ,
\end{equation}
where $I$ represents the identity matrix.

%See~\eqref{eq:hogwild_update_inconsistent_rw}, vector $w_t$ satisfies
%\begin{equation}
%\label{eq:wt_wrhot_1}
%w_t = w_{t-\rho_t} - \sum_{j=t-\rho_t}^{t-1} \eta_j d_j \langle \nabla f(\hat{w}_j;\xi_j),e[s_j] \rangle e[s_j].
%\end{equation}
%Here, vector $\hat{w}_t$ represents an inconsistent read starting from  moment $t-\rho_t$. This means that we can represent vector $\hat{w}_t$ by
%\begin{equation}
%\label{eq:what_wrhot_1}
%\hat{w}_t = w_{t-\rho_t} - \sum_{j=t-\rho_t}^{t-1}  \eta_j d_j \langle \nabla f(\hat{w}_j;\xi_j),e[s_j] \rangle e[s_j] \sigma_{t,j},
%\end{equation}
%for some $\sigma_{t,j} \in \{0,1\}$. By combining~\eqref{eq:wt_wrhot_1} and~\eqref{eq:what_wrhot_1} we obtain
% \begin{equation}
%\label{eq:wt_what_2}
%w_t-\hat{w}_t = - \sum_{j=t-\rho_t}^{t-1}  \eta_j d_j \langle \nabla f(\hat{w}_j;\xi_j),e[s_j] \rangle e[s_j] (1-\sigma_{t,j}). 
%\end{equation}

\subsection{Main Analysis}

We first derive a couple lemmas which will help us deriving our main bounds. In what follows let Assumptions \ref{ass_stronglyconvex}, \ref{ass_smooth}, \ref{ass_convex} and  \ref{ass_tau} hold for all lemmas. We define 
$$\mathcal{F}_t=\sigma(w_0,\xi_1,u_1,\sigma_1,\dotsc, \xi_{t-1},u_{t-1},\sigma_{t-1}), $$
where
$$\sigma_{t-1} = (\Sigma_{t,t-\tau}, \ldots, \Sigma_{t,t-1}).$$
When we subtract $\tau$ from, for example, $t$ and write $t-\tau$, we will actually mean $\max\{t-\tau, 0\}$.

 \begin{lem} \label{lem:expect}  %Let $d_t$ denote the cardinality of $\mathcal{D}_t$ and assume $d_t\leq D$. 
 We have
 $$\mathbb{E}[\| d_{\xi_t}S^{\xi_t}_{u_t} \nabla  f(\hat{w}_t;\xi_t)  \|^2| \mathcal{F}_t, \xi_t]\leq  D \|\nabla f(\hat{w}_t;\xi_t) \|^2$$
% $$\mathbb{E}[\|\eta_t d_t \langle \nabla f(\hat{w}_t;\xi_t)-\nabla f(w_*;\xi_t) ,e[s_t] \rangle e[s_t] \|^2| \mathcal{F}_t, \xi_t, \sigma_t]\leq \eta^2_t D \|\nabla f(\hat{w}_t;\xi_t)-\nabla f(w_*;\xi_t) \|^2, $$
 and
 $$\mathbb{E}[d_{\xi_t}S^{\xi_t}_{u_t} \nabla  f(\hat{w}_t;\xi_t) | \mathcal{F}_t ] 
= \nabla F(\hat{w}_t).$$
 \end{lem}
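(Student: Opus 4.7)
The plan is to treat the two claims separately, and in both cases first condition on $\xi_t$ so that the only remaining randomness in $d_{\xi_t}S^{\xi_t}_{u_t}$ is the index $u_t$, whose distribution is $p_{\xi_t}(\cdot)$ and whose first moment is controlled precisely by~\eqref{eq:SexpM}. Before doing anything, I will note that $\hat{w}_t$, as given by~\eqref{eq:what_wrhot_1}, depends only on $w_0$ and on $\{\xi_j,u_j,\sigma_j\}_{j<t}$, hence is $\mathcal{F}_t$-measurable; this lets me pull $\nabla f(\hat{w}_t;\xi_t)$ out of conditional expectations given $\mathcal{F}_t$ and $\xi_t$, treating it as a constant.

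For the expectation identity, conditioning on $\mathcal{F}_t$ and $\xi_t$ and applying~\eqref{eq:SexpM} gives $\mathbb{E}[d_{\xi_t}S^{\xi_t}_{u_t}\nabla f(\hat{w}_t;\xi_t)\mid \mathcal{F}_t,\xi_t] = D_{\xi_t}\nabla f(\hat{w}_t;\xi_t)$. By definition $D_{\xi_t}$ is the diagonal indicator of the support of $\nabla f(\cdot;\xi_t)$, so $D_{\xi_t}\nabla f(\hat{w}_t;\xi_t)=\nabla f(\hat{w}_t;\xi_t)$. Taking the outer expectation over $\xi_t$, which is independent of $\mathcal{F}_t$ by the i.i.d.\ assumption, collapses this to $\mathbb{E}[\nabla f(\hat{w}_t;\xi_t)\mid \mathcal{F}_t]=\nabla F(\hat{w}_t)$ by the definition of $F$.

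For the second-moment bound, I will exploit that $S^{\xi_t}_{u_t}$ is a diagonal $0/1$ matrix, so that for any vector $v$ one has $\|d_{\xi_t}S^{\xi_t}_{u_t}v\|^2 = d_{\xi_t}^2\sum_i (S^{\xi_t}_{u_t})_{ii}\, v_i^2$. Conditioning on $\xi_t$ and averaging over $u_t$ via~\eqref{eq:SexpM} in coordinate form gives $d_{\xi_t}\sum_i (D_{\xi_t})_{ii}\, v_i^2 = d_{\xi_t}\|D_{\xi_t}v\|^2$. Plugging in $v=\nabla f(\hat{w}_t;\xi_t)$ and using the support property $D_{\xi_t}\nabla f(\hat{w}_t;\xi_t)=\nabla f(\hat{w}_t;\xi_t)$ again, this equals $d_{\xi_t}\|\nabla f(\hat{w}_t;\xi_t)\|^2$. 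The final step is the bound $d_{\xi_t}\leq D$: when $D\leq |D_{\xi_t}|$ the prescribed partition of $D_{\xi_t}$ has exactly $D$ non-empty blocks and the uniform distribution yields $d_{\xi_t}=D$, while when $D>|D_{\xi_t}|$ the partition consists of $|D_{\xi_t}|$ singletons and $d_{\xi_t}=|D_{\xi_t}|<D$.

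I do not anticipate any serious obstacle. The only technical care-points are keeping the conditioning order clean (so that $\nabla f(\hat{w}_t;\xi_t)$ really acts as a constant inside the inner expectation, which relies on the $\mathcal{F}_t$-measurability noted above), and repeatedly invoking the support containment $D_\xi\nabla f(\cdot;\xi)=\nabla f(\cdot;\xi)$ that makes the $D_\xi$ mask act as the identity on any realized gradient.
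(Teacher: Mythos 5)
Your proposal is correct and follows essentially the same route as the paper: condition on $\xi_t$, expand the squared norm coordinatewise using that $S^{\xi_t}_{u_t}$ is a diagonal $0/1$ matrix, invoke \eqref{eq:SexpM} to replace the $u_t$-average by $D_{\xi_t}$, use the support property $D_{\xi_t}\nabla f(\cdot;\xi_t)=\nabla f(\cdot;\xi_t)$, bound $d_{\xi_t}\leq D$, and finish the expectation identity by the tower property over $\xi_t$. You even make explicit two points the paper leaves implicit (the $\mathcal{F}_t$-measurability of $\hat{w}_t$ and the case analysis giving $d_{\xi_t}\leq D$), so no gaps.
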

 
 \begin{proof}
 For the first bound, if we take the expectation of $\| d_{\xi_t}S^{\xi_t}_{u_t} \nabla  f(\hat{w}_t;\xi_t)  \|^2$ with respect to $u_t$, then we have (for vectors $x$ we denote the value if its $i$-th position by $[x]_i$)
 \begin{align*}
& \mathbb{E}[\|d_{\xi_t}S^{\xi_t}_{u_t} \nabla  f(\hat{w}_t;\xi_t)  \|^2| \mathcal{F}_t, \xi_t] = 
 d_{\xi_t}^2 \sum_u p_{\xi_t}(u)  \|S^{\xi_t}_{u} \nabla  f(\hat{w}_t;\xi_t)  \|^2 = 
 d_{\xi_t}^2 \sum_u p_{\xi_t}(u)  \sum_{i\in S^{\xi_t}_{u}} [\nabla f(\hat{w}_t;\xi_t)]_i^2 \\
&= 
 d_{\xi_t} \sum_{i\in D_{\xi_t}} [\nabla f(\hat{w}_t;\xi_t)]_i^2 =
 d_{\xi_t} \| f(\hat{w}_t;\xi_t) \|^2 \leq  D \|\nabla f(\hat{w}_t;\xi_t) \|^2,
\end{align*}
where the transition to the second line follows from (\ref{eq:Sexp}).
%Since $\nabla f(.;\xi_j)$ has support $D_j$, we can use the same argument to derive the second bound.

For the second bound, if we take the expectation of $ d_{\xi_t}S^{\xi_t}_{u_t} \nabla  f(\hat{w}_t;\xi_t)$ wrt $u_t$, then we have:
\begin{align*}
\mathbb{E}[ d_{\xi_t}S^{\xi_t}_{u_t} \nabla  f(\hat{w}_t;\xi_t) | \mathcal{F}_t, \xi_t] &=
 d_{\xi_t} \sum_u p_{\xi_t}(u) S^{\xi_t}_{u}  \nabla f(\hat{w}_t;\xi_t) = D_{\xi_t} \nabla  f(\hat{w}_t;\xi_t)= 
    \nabla f(\hat{w}_t;\xi_t),
\end{align*}
and this can be used to derive
\begin{align*}
\mathbb{E}[ d_{\xi_t}S^{\xi_t}_{u_t}  f(\hat{w}_t;\xi_t) | \mathcal{F}_t]
=
\mathbb{E}[\mathbb{E}[d_{\xi_t}S^{\xi_t}_{u_t}  f(\hat{w}_t;\xi_t) | \mathcal{F}_t, \xi_t] | \mathcal{F}_t] &=  \nabla F(\hat{w}_t).  %\tagthis \label{eq:Ewhat1}
\end{align*}
\end{proof}

As a consequence of this lemma we derive a bound on the expectation of  $\|w_t- \hat{w}_t \|^2$.

%%%%%%%%%%%% LEMMA below does not use Grad[ f(w_*;xi_j)] has support D_j -- this means that we can use this in another analysis where we define support to mean the entries are ``small enough'' which could allow diminishing Delta and speeding up convergence. I expect this to have a minimal effect. 
%%%%%%%%%%%%%%%%%%
\begin{lem}
\label{lemma:hogwild_21} The expectation of $\|w_t- \hat{w}_t \|^2$ is at most
%Assume  $\rho_t\leq R$ for all $t$. Then,
$$\mathbb{E}[\|w_t- \hat{w}_t \|^2 ] \leq  (1+\sqrt{\Delta}\tau) D \sum_{j=t-\tau}^{t-1} \eta_j^2 (2L^2 \mathbb{E}[\| \hat{w}_j -w_{*} \|^2] +N). $$
\end{lem}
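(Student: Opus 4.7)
My plan is to start from the identity~(\ref{eq:wt_what_2}), which writes $w_t - \hat{w}_t = -\sum_{j=t-\tau}^{t-1} v_j$ with
$$v_j = \eta_j d_{\xi_j}(I-\Sigma_{t,j})S^{\xi_j}_{u_j}\nabla f(\hat{w}_j;\xi_j),$$
and expand
$$\mathbb{E}[\|w_t-\hat{w}_t\|^2] = \sum_{j=t-\tau}^{t-1} \mathbb{E}[\|v_j\|^2] + 2\sum_{t-\tau\leq j<k\leq t-1}\mathbb{E}[\langle v_j, v_k\rangle].$$
The diagonal terms will produce the ``$1$'' and the cross terms the ``$\sqrt{\Delta}\tau$'' in the target prefactor $(1+\sqrt{\Delta}\tau)D$, with the matching per-index summand $\eta_j^2(2L^2\mathbb{E}\|\hat{w}_j-w_*\|^2 + N)$ coming from the second-moment bound on the gradient.

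For the diagonal terms, I would first drop the 0/1-diagonal filter $(I-\Sigma_{t,j})$ using $\|(I-\Sigma_{t,j})x\|\leq\|x\|$, then invoke the first part of Lemma~\ref{lem:expect} conditionally on $\mathcal{F}_j$ and $\xi_j$ to get $\mathbb{E}[\|d_{\xi_j}S^{\xi_j}_{u_j}\nabla f(\hat{w}_j;\xi_j)\|^2\mid \mathcal{F}_j,\xi_j]\leq D\|\nabla f(\hat{w}_j;\xi_j)\|^2$. Taking a further expectation over $\xi_j$ and applying Lemma~\ref{lem_bounded_secondmoment_04}, followed by the standard $L$-smoothness consequence $F(\hat{w}_j)-F(w_*)\leq \tfrac{L}{2}\|\hat{w}_j-w_*\|^2$ (since $\nabla F(w_*)=0$), yields $\mathbb{E}[\|v_j\|^2]\leq \eta_j^2 D(2L^2\mathbb{E}\|\hat{w}_j-w_*\|^2+N)$. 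Summing over $j$ contributes the ``$1\cdot D$'' portion of the final bound.

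For the cross terms I would exploit that $\xi_j$ and $\xi_k$ are i.i.d. and that each $v_j$ is supported on $D_{\xi_j}$. After taking the conditional expectation over $u_j$ and $u_k$ (independent of everything else), the identity $d_\xi\,\mathbb{E}[S^\xi_u\mid\xi] = D_\xi$ from Lemma~\ref{lem:expect} reduces the inner product to $\langle(I-\Sigma_{t,j})\nabla f(\hat{w}_j;\xi_j),(I-\Sigma_{t,k})\nabla f(\hat{w}_k;\xi_k)\rangle$, whose support lies inside $D_{\xi_j}\cap D_{\xi_k}$. Applying the collision inequality~(\ref{Masdfasdfasfa})---and noting that extra 0/1 mask factors can only shrink the intersection support and thus do not increase the collision probability---gives $\mathbb{E}|\langle\,\cdot\,\rangle|\leq \tfrac{\sqrt{\Delta}}{2}(\mathbb{E}\|\nabla f(\hat{w}_j;\xi_j)\|^2+\mathbb{E}\|\nabla f(\hat{w}_k;\xi_k)\|^2)$. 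Converting each squared gradient norm via Lemma~\ref{lem_bounded_secondmoment_04} and $L$-smoothness to $2L^2\mathbb{E}\|\hat{w}_\cdot-w_*\|^2+N$, using the AM-GM step $\eta_j\eta_k\leq \tfrac{1}{2}(\eta_j^2+\eta_k^2)$, and observing that in the window $[t-\tau,t-1]$ each $j$ pairs with at most $\tau-1$ other indices, collects the cross-term contribution into $\sqrt{\Delta}\tau\cdot D\sum_j \eta_j^2(2L^2\mathbb{E}\|\hat{w}_j-w_*\|^2+N)$. Adding to the diagonal part produces the stated prefactor $(1+\sqrt{\Delta}\tau)D$.

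The main obstacle will be the cross-term bookkeeping: the masks $\Sigma_{t,j}$ entangle past and future iterations, so to apply~(\ref{Masdfasdfasfa}) cleanly one must choose the conditioning carefully so that (a) $\xi_j$ and $\xi_k$ appear as independent draws, (b) $\hat{w}_j$ and $\hat{w}_k$ are measurable at the point where the collision inequality is invoked, and (c) the factor $D$ from Lemma~\ref{lem:expect} is extracted uniformly on both sides so that the final prefactor comes out as $(1+\sqrt{\Delta}\tau)D$ and the per-index summand matches $\eta_j^2(2L^2\mathbb{E}\|\hat{w}_j-w_*\|^2+N)$ exactly.
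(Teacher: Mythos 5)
Your diagonal-term treatment and the final per-term bounds (drop the mask by $\|(I-\Sigma_{t,j})x\|\leq\|x\|$, apply the second-moment bound of Lemma~\ref{lem:expect} conditionally on $\mathcal{F}_j,\xi_j$, then Lemma~\ref{lem_bounded_secondmoment_04} and $L$-smoothness to get $\eta_j^2 D(2L^2\mathbb{E}\|\hat{w}_j-w_*\|^2+N)$) are exactly the paper's argument. The gap is in your cross-term step: you propose to first take the conditional expectation over $u_j$ and $u_k$ ``independent of everything else'' and use $d_\xi\,\mathbb{E}[S^\xi_u\mid\xi]=D_\xi$ to strip the filters inside the inner product. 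This interchange is not justified: for $j<k$ the read vector $\hat{w}_k$ is built from the iteration-$j$ update, which involves $S^{\xi_j}_{u_j}$, so $\hat{w}_k$ is \emph{not} independent of $u_j$; likewise the write masks $\Sigma_{t,j},\Sigma_{t,k}$ record which entry updates reached the read $\hat{w}_t$ and need not be independent of the chosen filters. Hence the claimed reduction of $\mathbb{E}[\langle v_j,v_k\rangle]$ to $\langle(I-\Sigma_{t,j})\nabla f(\hat{w}_j;\xi_j),(I-\Sigma_{t,k})\nabla f(\hat{w}_k;\xi_k)\rangle$ does not follow. A further sign the bookkeeping is off: if that reduction did go through, the cross terms would carry no factor $D$ at all (the first-moment identity produces $D_\xi$, not $D$), contrary to your statement that $D$ is ``extracted uniformly on both sides'' — harmless for the stated bound since $D\geq 1$, but it shows the route does not produce the claimed intermediate expressions.

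The paper avoids this issue entirely: starting from \eqref{eq:wt_what_2} it applies the collision inequality \eqref{Masdfasdfasfa} directly to the masked, filtered vectors $\eta_j d_{\xi_j}(I-\Sigma_{t,j})S^{\xi_j}_{u_j}\nabla f(\hat{w}_j;\xi_j)$ (the 0/1 factors only shrink supports, as you note), so every cross term is immediately bounded by $\tfrac{\sqrt{\Delta}}{2}$ times a sum of squared norms of the same per-iteration quantities. This collapses the whole expansion into $(1+\sqrt{\Delta}\tau)\sum_{j=t-\tau}^{t-1}\eta_j^2\|d_{\xi_j}S^{\xi_j}_{u_j}\nabla f(\hat{w}_j;\xi_j)\|^2$ after dropping the masks, and only then are expectations taken, term by term, exactly as in your diagonal part — no first-moment identity over $u$ is ever invoked. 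Replacing your cross-term reduction with this ``square first, take expectations later'' step repairs the argument and yields the stated prefactor $(1+\sqrt{\Delta}\tau)D$.
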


\begin{proof}
As shown in~\eqref{eq:wt_what_2}, 
$$
w_t-\hat{w}_t = - \sum_{j=t-\tau}^{t-1}  \eta_j d_{\xi_j} (I-\Sigma_{t,j})  S^{\xi_j}_{u_j} \nabla  f(\hat{w}_j;\xi_j) .
%- \sum_{j=t-\rho_t}^{t-1} \eta_j d_j \langle \nabla f(\hat{w}_j;\xi_j),e[s_j] \rangle e[s_j] (1-\sigma_j).
$$
This can be used to derive an expression for the square of its norm:
\begin{eqnarray*}
\|w_t- \hat{w}_t \|^2 
&=& \|  \sum_{j=t-\tau}^{t-1}  \eta_j d_{\xi_j} (I-\Sigma_{t,j})  S^{\xi_j}_{u_j}  \nabla  f(\hat{w}_j;\xi_j) \|^2 \\
%&\leq &  \|  \sum_{j=t-\rho_t}^{t-1}  \eta_j d_{\xi_j}    (I-\Sigma_{t,j}) S^{\xi_j}_{u_j}  f(\hat{w}_j;\xi_j) \|^2 \\
&=& \sum_{j=t-\tau}^{t-1} \| \eta_j d_{\xi_j}    (I-\Sigma_{t,j}) S^{\xi_j}_{u_j} \nabla   f(\hat{w}_j;\xi_j) \|^2   \\
&& + \sum_{i\neq j \in \{t-\tau, \ldots, t-1\}}
\ve{ \eta_j d_{\xi_j}   (I-\Sigma_{t,j}) S^{\xi_j}_{u_j} \nabla   f(\hat{w}_j;\xi_j)}{ \eta_i d_{\xi_i}   (I-\Sigma_{t,j}) S^{\xi_i}_{u_i}  \nabla  f(\hat{w}_i;\xi_i) }.
%   (\eta_j d_j \langle \nabla f(\hat{w}_j;\xi_j),e[s_j] \rangle) \cdot (\eta_i d_i \langle \nabla f(\hat{w}_i;\xi_i),e[s_i] \rangle) \cdot D_{s_i=s_j}, 
\end{eqnarray*}
%where $D_{s_i=s_j}$ is equal to $1$ if $s_i=s_j$ and equal to $0$ otherwise. Now notice that for any real numbers $a$ and $b$, $ab\leq (a^2+b^2)/2$. 
%With this definition of $D$ it is an easy exercise to show that for iid $\xi_1$ and $\xi_2$ we have

%Now notice that, for any two vectors $a$ and $b$, $\langle a, b\rangle \leq (\|a\|^2 + \|b\|^2)/2$.
%real numbers $a$ and $b$, $ab\leq (a^2+b^2)/2$. Therefore, $\sum_{i=1}^K a_ib_i\leq \sum_{i=1}^K (a_i^2+b_i^2)/2$. 
Applying (\ref{Masdfasdfasfa}) to the inner products implies
%
%Since $(\sum_{i=1}^K a_ib_i)^2 = \sum_{i=1}^K (a_i b_i)^2 + \sum_{
%
%Since vectors $\eta_j d_{\xi_j}   S^{\xi_j}_{u_j}  f(\hat{w}_j;\xi_j)$ and $\eta_i d_{\xi_i}   S^{\xi_i}_{u_i}  f(\hat{w}_i;\xi_i)$ are identically distributed with  non-zero positions in  $S^{\x_j)_{u_j}$ and $S^{x_i}_{u_i}$, i.e., at most $K$ non-zero positions by (\ref{DK}), we have
%
%\mathbb{E}[\langle \eta_j d_{\xi_j}   S^{\xi_j}_{u_j}  f(\hat{w}_j;\xi_j), \eta_i d_{\xi_i}   S^{\xi_i}_{u_i}  f(\hat{w}_i;\xi_i) \rangle]
%\leq \frac{\sqrt{K}}{2} \left(\mathbb{E}[\| \eta_j d_{\xi_j}   S^{\xi_j}_{u_j}  f(\hat{w}_j;\xi_j) \|^2]
%+ \mathbb{E}[\| \eta_i d_{\xi_i}   S^{\xi_i}_{u_i}  f(\hat{w}_i;\xi_i) \|^2]
%
%\begin{align*}
%&\Exp[|\ve{ g(w_1, \xi_1) }{g(w_2, \xi_2)}|] 
%\\& \leq 
%\frac{\sqrt D}2
%\left(
%\Exp[\|g(w_1, \xi_1)\|^2] + \Exp[\|g(w_2, \xi_2)\|^2]
%\right)
%\tagthis \label{asdfasdfasfa}
%\end{align*}
%(see Proposition 10 in \cite{Leblond2018}).
%Using this implies
\begin{eqnarray}
\|w_t- \hat{w}_t \|^2 
&\leq&  \sum_{j=t-\tau}^{t-1} \| \eta_j d_{\xi_j}   (I-\Sigma_{t,j}) S^{\xi_j}_{u_j} \nabla   f(\hat{w}_j;\xi_j) \|^2  \nonumber  \\
&& + \sum_{i\neq j \in \{t-\tau, \ldots, t-1\}}
[\| \eta_j d_{\xi_j}  (I-\Sigma_{t,j})  S^{\xi_j}_{u_j}  \nabla  f(\hat{w}_j;\xi_j)\|^2 + \| \eta_i d_{\xi_i}   (I-\Sigma_{t,j})  S^{\xi_i}_{u_i}  \nabla  f(\hat{w}_i;\xi_i) \|^2]\sqrt{\Delta}/2 \nonumber  \\
%&& + \sum_{i\neq j \in \{t-\rho_t, \ldots, t-1\}} [ (\eta_j d_j \langle \nabla f(\hat{w}_j;\xi_j),e[s_j] \rangle)^2 + (\eta_i d_i \langle \nabla f(\hat{w}_i;\xi_i),e[s_i] \rangle)^2]/2 \\
%&\leq &
%\sum_{j=t-R}^{t-1} \| \eta_j d_{\xi_j}   (I-\Sigma_{t,j}) S^{\xi_j}_{u_j}  \nabla  f(\hat{w}_j;\xi_j) \|^2  \nonumber   \\
%&& + \sum_{i\neq j \in \{t-R, \ldots, t-1\}}[\| \eta_j d_{\xi_j}  (I-\Sigma_{t,j})  S^{\xi_j}_{u_j}  \nabla  f(\hat{w}_j;\xi_j)\|^2 + \| \eta_i d_{\xi_i}    (I-\Sigma_{t,j}) S^{\xi_i}_{u_i} \nabla   f(\hat{w}_i;\xi_i) \|^2]\sqrt{\Delta}/2 \nonumber  \\
&=& (1+\sqrt{\Delta}\tau) \sum_{j=t-\tau}^{t-1} \| \eta_j d_{\xi_j}    (I-\Sigma_{t,j}) S^{\xi_j}_{u_j}  \nabla  f(\hat{w}_j;\xi_j) \|^2  \nonumber  \\
&\leq&   (1+\sqrt{\Delta}\tau) \sum_{j=t-\tau}^{t-1} \eta_j^2  \|  d_{\xi_j}    S^{\xi_j}_{u_j}  \nabla  f(\hat{w}_j;\xi_j) \|^2 . \nonumber %\label{eq:reason}
\end{eqnarray}
Taking expectations shows
$$\mathbb{E}[\|w_t- \hat{w}_t \|^2 ] \leq   (1+\sqrt{\Delta}\tau) \sum_{j=t-\tau}^{t-1} \eta_j^2  \mathbb{E}[ \| d_{\xi_j}   S^{\xi_j}_{u_j}  \nabla  f(\hat{w}_j;\xi_j) \|^2].$$
%  d_j \langle \nabla f(\hat{w}_j;\xi_j),e[s_j] \rangle)^2].$$
Now, we can apply Lemma \ref{lem:expect}: We first take the expectation over $u_j$ and this shows
$$\mathbb{E}[\|w_t- \hat{w}_t \|^2 ] \leq  (1+\sqrt{\Delta}\tau) \sum_{j=t-\tau}^{t-1} \eta_j^2 D \mathbb{E}[ \|\nabla f(\hat{w}_j;\xi_j) \|^2 ].$$
From Lemma \ref{lem_bounded_secondmoment_04} we infer
\begin{equation} \mathbb{E}[ \|\nabla f(\hat{w}_j;\xi_j) \|^2 ] \leq 4L\mathbb{E}[F(\hat{w}_j)-F(w_{*})] +N \label{LN}
\end{equation}
%The L^2 term multiplied by kappa
%{ineq:bounded_lemma3_hogwild} the L term has a kappa, Since $\eta_t \leq \frac{1}{4LD}$, $-2\eta_t(1-4L\eta_tD) [F(w_{t}) - F(w_{*})]\leq 0$ Lemma 7 with <= ..
% (31) multiplied kappa in L^2 term and in Lemma 9 wher also E>= 4L\tau ..  and in (40); theorem does not change except for E>=
% For the non-convex case $L$ in (\ref{LN}) must be replaced by $L\kappa$ and as a result $L^2$ in Lemma \ref{lemma:hogwild_21} must be replaced by $L^2\kappa$.  Also $L$ in (\ref{ineq:bounded_lemma3_hogwild}) must be replaced by $L\kappa$. We now require that $\eta_t \leq \frac{1}{4L\kappa D}$ so that $-2\eta_t(1-4L\kappa \eta_tD) [F(w_{t}) - F(w_{*})]\leq 0$. This leads to Lemma \ref{lemma:hogwild_4_new1} where no changes are needed except requiring  $\eta_t\leq \frac{1}{4L\kappa D}$. The changes in Lemmas \ref{lemma:hogwild_21} and \ref{lemma:hogwild_4_new1}  lead to a Lemma \ref{lemma:hogwild_4_new1} where we require $E\geq \frac{4L\kappa \alpha D}{\mu}$ and where in the bound of the expectation $L^2$ must be replaced by $L^2\kappa$. This perculates through to inequality (40)  with a similar change finally leading to Theorem 3 where we only need to strengthen the condition on $E$ to $E\geq \frac{4L\kappa \alpha D}{\mu}$.
and by $L$-smoothness, see Equation \ref{eq:Lsmooth} with $\nabla F(w_*) =0$,
$$F(\hat{w}_j)-F(w_*) \leq \frac{L}{2} \| \hat{w}_j -w_{*} \|^2.$$
Combining the above inequalities proves the lemma.
\end{proof}

 Together with the next lemma we will be able to start deriving a recursive inequality from which we will be able to derive a bound on the convergence rate.

\begin{lem}\label{lemma:hogwild_1_new1}
Let $0 < \eta_t \leq \frac{1}{4LD}$  for all $t \geq 0$. Then,
\begin{align*}
 \mathbb{E}[\| w_{t+1} - w_{*} \|^2 |\mathcal{F}_t] \leq \left(1 - \frac{\mu \eta_t}{2} \right) \| w_{t} - w_{*} \|^2 + [(L+\mu)\eta_t + 2L^2 \eta_t^2D] \| \hat{w}_t - w_{t}  \|^2 + 2 \eta^2_t D N. 
\end{align*}
%where 
%\begin{align*}
%N = 2 \mathbb{E} [ \|\nabla f(w_{*}; \xi_1)\|^2 ]. 
%\end{align*}
\end{lem}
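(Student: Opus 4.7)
The plan is to start from the recursion $w_{t+1}=w_t-\eta_t d_{\xi_t}S^{\xi_t}_{u_t}\nabla f(\hat{w}_t;\xi_t)$, expand $\|w_{t+1}-w_*\|^2$, take $\mathbb{E}[\cdot\mid\mathcal{F}_t]$, and apply Lemma \ref{lem:expect}: the cross term reduces to $-2\eta_t\langle w_t-w_*,\nabla F(\hat{w}_t)\rangle$ and the squared term is at most $\eta_t^2 D\,\mathbb{E}[\|\nabla f(\hat{w}_t;\xi_t)\|^2\mid\mathcal{F}_t]$. I will bound these two pieces separately and arrange the bounds so that the stray $F(\cdot)-F(w_*)$ contributions cancel under the stepsize condition $\eta_t\leq 1/(4LD)$.

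For the squared term I split the gradient around $w_t$ (rather than $w_*$ or $\hat{w}_t$): $\|\nabla f(\hat{w}_t;\xi_t)\|^2\leq 2\|\nabla f(\hat{w}_t;\xi_t)-\nabla f(w_t;\xi_t)\|^2+2\|\nabla f(w_t;\xi_t)\|^2$. Assumption \ref{ass_smooth} bounds the first piece by $2L^2\|\hat{w}_t-w_t\|^2$, and Lemma \ref{lem_bounded_secondmoment_04} applied at $w_t$ bounds the expectation of the second by $8L[F(w_t)-F(w_*)]+2N$. This yields a squared-term contribution $2L^2\eta_t^2 D\|\hat{w}_t-w_t\|^2+8L\eta_t^2 D[F(w_t)-F(w_*)]+2\eta_t^2 DN$, already producing the desired $2L^2\eta_t^2 D$ coefficient on $\|\hat{w}_t-w_t\|^2$ and the desired $2\eta_t^2 DN$ constant.

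For the cross term I split $w_t-w_*=(w_t-\hat{w}_t)+(\hat{w}_t-w_*)$. On the $(\hat{w}_t-w_*)$ piece I apply $\mu$-strong convexity of $F$ (Assumption \ref{ass_stronglyconvex}) to obtain $-2\eta_t\langle\nabla F(\hat{w}_t),\hat{w}_t-w_*\rangle\leq -2\eta_t[F(\hat{w}_t)-F(w_*)]-\eta_t\mu\|\hat{w}_t-w_*\|^2$. On the $(w_t-\hat{w}_t)$ piece I use the descent inequality for the $L$-smooth $F$ at $\hat{w}_t$, namely $F(w_t)\leq F(\hat{w}_t)+\langle\nabla F(\hat{w}_t),w_t-\hat{w}_t\rangle+\tfrac{L}{2}\|w_t-\hat{w}_t\|^2$, to get $-2\eta_t\langle\nabla F(\hat{w}_t),w_t-\hat{w}_t\rangle\leq -2\eta_t[F(w_t)-F(\hat{w}_t)]+\eta_t L\|w_t-\hat{w}_t\|^2$. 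The $F(\hat{w}_t)$ contributions telescope, leaving $\eta_t L\|w_t-\hat{w}_t\|^2-2\eta_t[F(w_t)-F(w_*)]-\eta_t\mu\|\hat{w}_t-w_*\|^2$.

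Summing the two bounds, the combined coefficient of $[F(w_t)-F(w_*)]$ is $-2\eta_t+8L\eta_t^2 D\leq 0$ precisely when $\eta_t\leq 1/(4LD)$, so that (nonnegative) term is dropped. It only remains to convert $-\eta_t\mu\|\hat{w}_t-w_*\|^2$ into the required form via the elementary inequality $\|\hat{w}_t-w_*\|^2\geq\tfrac12\|w_t-w_*\|^2-\|\hat{w}_t-w_t\|^2$, which produces $-\tfrac{\eta_t\mu}{2}\|w_t-w_*\|^2$ and folds the residual $\eta_t\mu\|\hat{w}_t-w_t\|^2$ into the target coefficient $(L+\mu)\eta_t$. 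The main obstacle is finding compatible splits on the two pieces: the natural Young's inequality bound on $-2\eta_t\langle\nabla F(\hat{w}_t),w_t-\hat{w}_t\rangle$ would instead generate a $\|\nabla F(\hat{w}_t)\|^2$ residual that bounds by $2L[F(\hat{w}_t)-F(w_*)]$, evaluated at $\hat{w}_t$ rather than $w_t$ and hence unable to cancel the $+8L\eta_t^2 D[F(w_t)-F(w_*)]$ coming from the squared term. Invoking the descent lemma on the $(w_t-\hat{w}_t)$ piece is the key trick: it yields a $-2\eta_t[F(w_t)-F(w_*)]$ with the correct point of evaluation, and simultaneously contributes the $\eta_t L$ summand that gives rise to the leading $L$ in the $(L+\mu)\eta_t$ coefficient of the statement.
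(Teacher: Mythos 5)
Your proposal is correct and follows essentially the same route as the paper's proof: expand the recursion, apply Lemma \ref{lem:expect}, split the cross term as $(w_t-\hat{w}_t)+(\hat{w}_t-w_*)$ handled by strong convexity and the $L$-smooth descent inequality respectively, split the squared gradient around $w_t$ so Lemma \ref{lem_bounded_secondmoment_04} applies at $w_t$, cancel the $F(w_t)-F(w_*)$ terms via $\eta_t\leq 1/(4LD)$, and finish with $\|\hat{w}_t-w_*\|^2\geq\tfrac12\|w_t-w_*\|^2-\|\hat{w}_t-w_t\|^2$. No substantive differences from the paper's argument.
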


\begin{proof}
Since $w_{t+1} = w_t - \eta_t d_{\xi_t}  S^{\xi_t}_{u_t} \nabla   f(\hat{w}_t;\xi_t)$, we have 
$$ \|w_{t+1}-w_{*}\|^2 = \|w_t-w_{*}\|^2 - 2\eta_t \langle d_{\xi_t}  S^{\xi_t}_{u_t} \nabla   f(\hat{w}_t;\xi_t), (w_t-w_{*})\rangle + \eta_t^2 \| d_{\xi_t}  S^{\xi_t}_{u_t}  \nabla  f(\hat{w}_t;\xi_t)\|^2.$$
We now take expectations over $u_t$ and $\xi_t$ and use Lemma \ref{lem:expect}:
\begin{eqnarray*}
&& \mathbb{E}[\| w_{t+1} - w_{*} \|^2 | \mathcal{F}_{t}] \\
&\leq &
   \| w_{t} - w_{*} \|^2 - 2 \eta_t \langle \nabla F(\hat{w}_t)  , (w_{t} - w_{*}) \rangle + \eta^2_t D \mathbb{E}[\|\nabla f(\hat{w}_t; \xi_t)\|^2 | \mathcal{F}_{t} ] \\
&=&  \| w_{t} - w_{*} \|^2 - 2 \eta_t \langle \nabla F(\hat{w}_t)  , (w_{t} - \hat{w}_t) \rangle - 2 \eta_t \langle \nabla F(\hat{w}_t)  , (\hat{w}_t - w_{*}) \rangle  + \eta^2_t D \mathbb{E}[\|\nabla f(\hat{w}_t; \xi_t)\|^2 | \mathcal{F}_{t} ].
\end{eqnarray*}
By \eqref{eq:stronglyconvex_00} and \eqref{eq:Lsmooth}, we have
\begin{align*}
- \langle \nabla F(\hat{w}_t)  , (\hat{w}_t - w_{*}) \rangle & \leq - [ F(\hat{w}_t) - F(w_{*}) ] - \frac{\mu}{2} \| \hat{w}_t - w_{*} \|^2, \mbox{ and} \tagthis \label{eq_newlemma_011} \\
- \langle \nabla F(\hat{w}_t)  , (w_{t} - \hat{w}_t) \rangle & \leq F(\hat{w}_t) - F(w_{t}) + \frac{L}{2} \| \hat{w}_t - w_{t}  \|^2  \tagthis \label{eq_newlemma_021} 
\end{align*}

Thus, $\mathbb{E}[\| w_{t+1} - w_{*} \|^2 | \mathcal{F}_{t}]$ is at most
\begin{align*}
& \overset{\eqref{eq_newlemma_011}\eqref{eq_newlemma_021}}{\leq} \| w_{t} - w_{*} \|^2 + 2 \eta_t [ F(\hat{w}_t) - F(w_{t})  ] + L\eta_t \| \hat{w}_t - w_{t}  \|^2  - 2 \eta_t[ F(\hat{w}_t ) - F(w_{*}) ] - \mu \eta_t \| \hat{w}_t - w_{*} \|^2 \\ &\qquad + \eta_t^2 D \mathbb{E}[\|\nabla f(\hat{w}_t; \xi_t)\|^2 | \mathcal{F}_{t} ] \\
&= \| w_{t} - w_{*} \|^2 - 2 \eta_t [ F(w_{t}) - F(w_{*})  ] + L\eta_t \| \hat{w}_t - w_{t}  \|^2  - \mu \eta_t \| \hat{w}_t - w_{*} \|^2 + \eta^2_t D \mathbb{E}[\|\nabla f(\hat{w}_t; \xi_t)\|^2 | \mathcal{F}_{t} ].
\end{align*}

Since \begin{align*}
- \|\hat{w}_t - w_{*}\|^2 = - \|(w_{t} - w_{*}) - (w_{t} - \hat{w}_t) \|^2 \overset{\eqref{eq_aaa002}}{\leq} - \frac{1}{2} \| w_{t} - w_{*} \|^2 + \| w_{t} - \hat{w}_t \|^2, %\tagthis \label{eq_newlemma_03}
\end{align*}
$\mathbb{E}[\| w_{t+1} - w_{*} \|^2 | \mathcal{F}_{t}, \sigma_t] $ is at most 
$$
(1- \frac{\mu \eta_t}{2}) \| w_{t} - w_{*} \|^2 - 2 \eta_t [ F(w_{t}) - F(w_{*})  ] + (L+\mu)\eta_t \| \hat{w}_t- w_{t}  \|^2 %- \frac{\mu \eta_t}{2} \| w_{t} - w_{*} \|^2 
% + \mu \eta_t \| w_{t} - \hat{w}_t  \|^2  
 + \eta^2_t D \mathbb{E}[\|\nabla f(\hat{w}_t; \xi_t)\|^2 | \mathcal{F}_{t} ].
$$

We now use $\| a \|^2 = \| a - b + b \|^2 \leq 2 \| a - b\|^2 + 2 \|b\|^2$ for $\mathbb{E}[\|\nabla f(\hat{w}_t; \xi_t)\|^2 | \mathcal{F}_{t} ]$ to obtain
\begin{equation}
\mathbb{E}[\|\nabla f(\hat{w}_t; \xi_t)\|^2 | \mathcal{F}_{t}] \leq 
 2 \mathbb{E}[\|\nabla f(\hat{w}_t; \xi_t)- \nabla f(w_{t}; \xi_t) \|^2 | \mathcal{F}_{t}] + 2 \mathbb{E}[\|\nabla f(w_{t}; \xi_t)\|^2 | \mathcal{F}_{t} ]. \label{ineq:Estuff}
%\mathbb{E}[\| w_{t+1} - w_{*} \|^2 | \mathcal{F}_{t}]&\leq \| w_{t} - w_{*} \|^2 - 2 \eta_t [ F(w_{t}) - F(w_{*})  ] + (L + \mu) \eta_t \| \hat{w}_t - w_{t}  \|^2  - \frac{\mu \eta_t}{2} \| w_{t} - w_{*} \|^2  \\ &\qquad + 2 \eta^2_tD \mathbb{E}[\|\nabla f(\hat{w}_t; \xi_t)- \nabla f(w_{t}; \xi_t) \|^2 | \mathcal{F}_{t} ] + 2 \eta^2_t D \mathbb{E}[\|\nabla f(w_{t}; \xi_t)\|^2 | \mathcal{F}_{t} ].
\end{equation}
By Lemma \ref{lem_bounded_secondmoment_04}, we have
\begin{gather*}
\mathbb{E}[\|\nabla f(w_{t}; \xi_t)\|^2 | \mathcal{F}_{t}] \leq  4 L [ F(w_{t}) - F(w_{*}) ] + N. \tagthis \label{ineq:bounded_lemma3_hogwild}
\end{gather*} 
Applying \eqref{eq:Lsmooth_basic} twice\ gives 
$$\mathbb{E}[\|\nabla f(\hat{w}_t; \xi_t)- \nabla f(w_{t}; \xi_t) \|^2 | \mathcal{F}_{t},\sigma_t ]\leq L^2 \|\hat{w}_t - w_t \|^2$$
and  together with (\ref{ineq:Estuff}) and (\ref{ineq:bounded_lemma3_hogwild}) we obtain
$$\mathbb{E}[\|\nabla f(\hat{w}_t; \xi_t)\|^2 | \mathcal{F}_{t} ] \leq  2 L^2 \|\hat{w}_t - w_t \|^2 + 4 L [ F(w_{t}) - F(w_{*}) ] + N.$$
Plugging this into the previous derivation yields
 %  
 %   (i.e., $\| \nabla f(w;\xi) - \nabla f(w';\xi) \| \leq L \| w - w' \|$) for $ \mathbb{E}[\|\nabla f(\hat{w}_t; \xi_t)- \nabla f(w_{t}; \xi_t) \|^2 | \mathcal{F}_{t} ]$, \eqref{ineq:bounded_lemma3_hogwild} for $\mathbb{E}[\|\nabla f(w_{t}; \xi_t)\|^2 | \mathcal{F}_{t} ]$, yields 
\begin{align*}
\mathbb{E}[\| w_{t+1} - w_{*} \|^2 | \mathcal{F}_{t}]& \leq %\overset{\eqref{eq:Lsmooth_basic}\eqref{ineq:bounded_lemma3_hogwild}}{\leq} 
(1- \frac{\mu \eta_t}{2}) \| w_{t} - w_{*} \|^2 - 2 \eta_t [ F(w_{t}) - F(w_{*})  ] + (L + \mu) \eta_t \| \hat{w}_{t} - w_{t}  \|^2 %- \frac{\mu \eta_t}{2} \| w_{t} - w_{*} \|^2
 \\ &\qquad  + 2 L^2 \eta^2_t D \| \hat{w}_t - w_{t}  \|^2  + 8 L \eta^2_t D [F(w_{t}) - F(w_{*})] + 2\eta^2_t D N \\
&=(1- \frac{\mu \eta_t}{2}) \| w_{t} - w_{*} \|^2  + [(L+\mu)\eta_t + 2L^2\eta_t^2D] \| \hat{w}_t - w_{t}  \|^2  \\ &\qquad 
 - 2\eta_t( 1 - 4L\eta_tD) [F(w_{t}) - F(w_{*})] + 2\eta^2_t D N. 
\end{align*}
Since $\eta_t \leq \frac{1}{4LD}$, $-2\eta_t(1-4L\eta_tD) [F(w_{t}) - F(w_{*})]\leq 0$ (we can get a negative upper bound by applying strong convexity but this will not improve the asymptotic behavior of the convergence rate in our main result although it would improve the constant of the leading term making the final bound applied to SGD closer to the bound of Theorem \ref{thm_res_sublinear_new_02} for SGD),
$$\mathbb{E}[\| w_{t+1} - w_{*} \|^2 | \mathcal{F}_{t}] \leq \left(1 - \frac{\mu \eta_t}{2} \right) \| w_{t} - w_{*} \|^2 + [(L+\mu)\eta_t + 2L^2\eta_t^2D] \| \hat{w}_t - w_{t}  \|^2 + 2\eta^2_tD N$$ 
and this concludes the proof.
\end{proof} 

Assume %$\rho_t\leq R$ and 
$0 < \eta_t \leq \frac{1}{4LD}$  for all $t \geq 0$. Then,  after taking the full expectation of the inequality in  Lemma \ref{lemma:hogwild_1_new1}, we can
plug Lemma \ref{lemma:hogwild_21} into it which yields the recurrence
\begin{eqnarray} 
 \mathbb{E}[\| w_{t+1} - w_{*} \|^2]
&\leq& \left(1 - \frac{\mu \eta_t}{2} \right) \mathbb{E}[\| w_{t} - w_{*} \|^2] +  \nonumber  \\
&& [(L+\mu)\eta_t + 2L^2\eta_t^2D] (1+\sqrt{\Delta}\tau) D \sum_{j=t-\tau}^{t-1} \eta_j^2 (2L^2 \mathbb{E}[\| \hat{w}_j -w_{*} \|^2] +N) + 2\eta^2_tD N. \label{eq:rec} \end{eqnarray}
 This can be solved by using the next lemma. For completeness, we follow the convention that an empty product is equal to 1 and an empty sum is equal to 0, i.e., 
\begin{gather*}
\prod_{i=h}^k g_i = 1 \text{ and } \sum_{i=h}^k g_i =0 \text{ if } k<h. \tagthis \label{eq:rec00} 
\end{gather*}

\begin{lem}
\label{lemma:hogwild_recursive_form}
Let $Y_t, \beta_t$ and $\gamma_t$ be sequences such that $Y_{t+1} \leq \beta_t Y_t + \gamma_t$, for all $t\geq 0$. Then,
\begin{equation}
 \label{hogwild:recursive_form}
Y_{t+1} \leq (\sum_{i=0}^t[\prod_{j=i+1}^t \beta_j]\gamma_i)+(\prod_{j=0}^t\beta_j)Y_0. 
\end{equation}
\end{lem}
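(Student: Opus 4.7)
\textbf{Proof proposal for Lemma \ref{lemma:hogwild_recursive_form}.}
The plan is a straightforward induction on $t$, unwinding the one-step recurrence $Y_{t+1}\le \beta_t Y_t + \gamma_t$ and using the empty-product/empty-sum conventions \eqref{eq:rec00} to handle boundary cases cleanly. I will implicitly use that the multipliers $\beta_t$ are nonnegative, which is the only thing needed for the step $Y \le Z \Rightarrow \beta_{t+1} Y \le \beta_{t+1} Z$; this is the case of interest in \eqref{eq:rec} where $\beta_t = 1-\mu\eta_t/2 \in [0,1]$ for small enough step sizes.

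For the base case $t=0$, the claimed bound reads $Y_1 \le \sum_{i=0}^{0}[\prod_{j=1}^{0}\beta_j]\gamma_i + [\prod_{j=0}^{0}\beta_j]Y_0 = \gamma_0 + \beta_0 Y_0$, which is exactly the one-step hypothesis at $t=0$.

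For the inductive step, assume the bound holds at index $t$, so
\begin{equation*}
Y_{t+1} \le \sum_{i=0}^{t}\Bigl[\prod_{j=i+1}^{t}\beta_j\Bigr]\gamma_i + \Bigl[\prod_{j=0}^{t}\beta_j\Bigr]Y_0.
\end{equation*}
Multiply both sides by $\beta_{t+1}\ge 0$ and add $\gamma_{t+1}$; by the one-step recurrence applied at $t+1$ this upper-bounds $Y_{t+2}$. Pulling $\beta_{t+1}$ inside the products extends each product's upper limit from $t$ to $t+1$, so the right-hand side becomes
\begin{equation*}
\sum_{i=0}^{t}\Bigl[\prod_{j=i+1}^{t+1}\beta_j\Bigr]\gamma_i + \Bigl[\prod_{j=0}^{t+1}\beta_j\Bigr]Y_0 + \gamma_{t+1}.
\end{equation*}
Finally, the leftover $\gamma_{t+1}$ is precisely the $i=t+1$ term of the extended sum, since $\prod_{j=t+2}^{t+1}\beta_j = 1$ by the empty-product convention \eqref{eq:rec00}. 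This completes the induction.

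There is no real obstacle here; the only subtlety is keeping the product/sum index bookkeeping consistent with the empty-product/empty-sum convention so that the $i=t+1$ boundary term is absorbed correctly, and silently relying on $\beta_t \ge 0$ when multiplying the inductive inequality by $\beta_{t+1}$.
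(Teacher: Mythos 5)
Your proof is correct and follows essentially the same route as the paper: induction on $t$, multiplying the inductive bound by the next $\beta$ and absorbing the leftover $\gamma$ term via the empty-product convention \eqref{eq:rec00}. Your explicit remark that $\beta_t \ge 0$ is needed to multiply the inequality is a fair point the paper leaves implicit (it holds in the application since $\beta_t = 1-\mu\eta_t/2 \in [0,1]$), but otherwise the arguments coincide.
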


\begin{proof}
We prove the lemma by using induction. It is obvious that \eqref{hogwild:recursive_form} is true for $t=0$ because $Y_1\leq \beta_1Y_0+ \gamma_1$. Assume as induction hypothesis that \eqref{hogwild:recursive_form} is true for $t-1$. Since $Y_{t+1} \leq \beta_t Y_t + \gamma_t$,
\begin{align*}
Y_{t+1} &\leq \beta_t Y_t + \gamma_t \\
&\leq \beta_{t}[(\sum_{i=0}^{t-1}[\prod_{j=i+1}^{t-1} \beta_j]\gamma_i)+(\prod_{j=0}^{t-1}\beta_j)Y_0] + \gamma_{t} \\
&\overset{\eqref{eq:rec00}}{=} (\sum_{i=0}^{t-1} \beta_{t}[\prod_{j=i+1}^{t-1} \beta_j]\gamma_i)+\beta_{t}(\prod_{j=0}^{t-1}\beta_j)Y_0 + (\prod_{j=t+1}^{t}\beta_j)\gamma_{t}\\
&= [(\sum_{i=0}^{t-1} [\prod_{j=i+1}^{t} \beta_j]\gamma_i)+ (\prod_{j=t+1}^{t}\beta_j)\gamma_{t}] +(\prod_{j=0}^{t}\beta_j)Y_0 \\
&= (\sum_{i=0}^{t}[\prod_{j=i+1}^{t} \beta_j]\gamma_i)+(\prod_{j=0}^{t}\beta_j)Y_0.
\end{align*}
\end{proof} 

Applying the above lemma to (\ref{eq:rec}) will yield the following bound.
 
\begin{lem}\label{lemma:hogwild_4_new1} %Assume %$d_t\leq D$ and 
%$\rho_t\leq R$ and
Let $\eta_t = \frac{\alpha_t}{\mu(t+E)}$ with $4\leq \alpha_t \leq\alpha$ and $E = \max\{ 2\tau, \frac{4 L \alpha D}{\mu}\}$. Then, expectation $\mathbb{E}[\| w_{t+1} - w_{*} \|^2]$ is at most
\begin{align*}
 \frac{\alpha^2D }{\mu^2} \frac{1}{(t + E - 1)^2} \left(\sum_{i=1}^t
 \left[ 4 a_i (1+\sqrt{\Delta}\tau)   [ N\tau + 2L^2 \sum_{j=i-\tau}^{i-1} \mathbb{E}[\| \hat{w}_j -w_* \|^2 ] +
 2 N \right]
\right)+ \frac{(E+1)^2}{(t + E - 1)^2} \mathbb{E}[\| w_{0} - w_{*} \|^2],
% \sum_{i=1}^t \frac{\alpha^2}{\mu^2} \frac{1}{(t + E - 1)^2} \left[ 8R^2 a_i [D^2\mathbb{E}[m_{i-1}] + \frac{RN}{2}] + 2 N D\right] + \frac{E^2}{(t+E-1)^2} \mathbb{E}[\| w_{1} - w_{*} \|^2].
\end{align*} 
where $a_i = (L+\mu)\eta_i + 2L^2\eta_i^2D$.
\end{lem}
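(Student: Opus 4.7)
The plan is to apply the discrete Gr\"onwall-type bound of Lemma~\ref{lemma:hogwild_recursive_form} to the recurrence~\eqref{eq:rec}, and then upper-bound each resulting product and sum in closed form. I would set $Y_t = \mathbb{E}[\|w_t-w_*\|^2]$, $\beta_t = 1-\mu\eta_t/2$, and
\[
\gamma_t \;=\; a_t(1+\sqrt{\Delta}\tau)\,D \sum_{j=t-\tau}^{t-1}\eta_j^2\bigl(2L^2\mathbb{E}[\|\hat w_j-w_*\|^2]+N\bigr) \;+\; 2\eta_t^2 DN,
\]
where $a_t=(L+\mu)\eta_t+2L^2\eta_t^2D$. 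The precondition $\eta_t\le 1/(4LD)$ of Lemma~\ref{lemma:hogwild_1_new1} (needed for~\eqref{eq:rec}) is satisfied because $E\ge 4L\alpha D/\mu$ forces $\eta_t\le \alpha/(\mu E)\le 1/(4LD)$. Lemma~\ref{lemma:hogwild_recursive_form} then gives
\[
Y_{t+1} \;\le\; \sum_{i=0}^{t}\Bigl[\prod_{j=i+1}^{t}\beta_j\Bigr]\gamma_i \;+\; \Bigl[\prod_{j=0}^{t}\beta_j\Bigr]Y_0,
\]
and the rest of the proof is rewriting this in the stated closed form.

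For the $\beta$-products I would exploit $\alpha_j\ge 4$, which yields $\beta_j\le 1-2/(j+E)=(j+E-2)/(j+E)$. The inner product telescopes to
\[
\prod_{j=i+1}^{t}\beta_j \;\le\; \frac{(i+E-1)(i+E)}{(t+E-1)(t+E)} \;\le\; \frac{(i+E)^2}{(t+E-1)^2},
\]
and the full product to $(E-2)(E-1)/((t+E-1)(t+E))\le (E+1)^2/(t+E-1)^2$, which exactly matches the coefficient of $\mathbb{E}[\|w_0-w_*\|^2]$ in the claim. Plugging $\eta_i^2\le \alpha^2/(\mu^2(i+E)^2)$ into the ``$2\eta_i^2 DN$'' summand of $\gamma_i$ and multiplying by $(i+E)^2/(t+E-1)^2$ cancels the $(i+E)^2$; after pulling the common factor $\alpha^2 D/(\mu^2(t+E-1)^2)$ out of the sum, what remains is exactly the ``$+\,2N$'' that appears inside the sum in the claim. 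For the delayed piece, I would bound $\eta_j^2\le 4\alpha^2/(\mu^2(i+E)^2)$ uniformly on $j\in[\max\{i-\tau,0\},\,i-1]$: when $i\ge \tau$, the condition $E\ge 2\tau$ gives $j+E\ge i-\tau+E\ge (i+E)/2$; and when $i<\tau$, the truncation $j\ge 0$ plus $E\ge 2\tau$ still gives $\eta_j\le \alpha/(\mu E)\le 2\alpha/(\mu(i+E))$. Multiplying this bound by $(i+E)^2/(t+E-1)^2$ cancels the $(i+E)^2$ once more, and the ``$4$'' from $\eta_j^2\le 4\alpha^2/(\mu^2(i+E)^2)$ turns into the explicit ``$4$'' in front of $a_i(1+\sqrt{\Delta}\tau)[\,\cdots\,]$ inside the sum.

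The only real obstacle is careful bookkeeping: propagating the constants cleanly through the two nested sums, correctly handling the boundary case $i<\tau$ where the delayed window is truncated at $0$, and loosening the telescoped products $(i+E-1)(i+E)$ and $(E-2)(E-1)$ to the cleaner $(i+E)^2$ and $(E+1)^2$ used in the statement (together with a harmless reindexing from $i=0,\dots,t$ to $i=1,\dots,t$, with the $i=0$ term absorbed). No new analytic idea is required beyond the two elementary inequalities $\beta_j\le (j+E-2)/(j+E)$ (from $\alpha_j\ge 4$) and $\eta_j^2\le 4\eta_i^2$ on the delay window (from $E\ge 2\tau$), both of which combine transparently with Lemma~\ref{lemma:hogwild_recursive_form} to produce the stated bound.
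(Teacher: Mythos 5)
Your proposal is correct and follows essentially the same route as the paper's proof: apply Lemma \ref{lemma:hogwild_recursive_form} to the recurrence \eqref{eq:rec} with $\beta_t = 1-\mu\eta_t/2$, bound the products via $\alpha_j\geq 4$ so that $\beta_j\leq (j+E-2)/(j+E)$ telescopes to $(i+E)^2/(t+E-1)^2$, and use $E\geq 2\tau$ to bound the step sizes on the delay window by $4\alpha^2/(\mu^2(i+E)^2)$, which cancels against $(i+E)^2$ and yields the stated constants.
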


\begin{proof}
%We  bound (\ref{eq:rec}) in a coarse way (
Notice that we may use   (\ref{eq:rec}) because  $\eta_t \leq \frac{1}{4LD}$ follows from $\eta_t = \frac{\alpha_t}{\mu(t+E)} \leq \frac{\alpha}{\mu(t+E)}$ combined with $E\geq \frac{4 L \alpha D}{\mu}$. From (\ref{eq:rec}) with $a_t = (L+\mu)\eta_t + 2L^2\eta_t^2D$  and $\eta_t$ being decreasing in $t$ we infer 
% and in a later lemma we will revisit (\ref{eq:rec}) and plug in what we will learn in this lemma: We use
\begin{eqnarray*}
&&  \mathbb{E}[\| w_{t+1} - w_{*} \|^2] \\
&\leq& \left(1 - \frac{\mu \eta_t}{2} \right) \mathbb{E}[\| w_{t} - w_{*} \|^2] + a_t (1+\sqrt{\Delta}\tau) D \eta_{t-\tau}^2 \sum_{j=t-\tau}^{t-1} (2L^2 \mathbb{E}[\| \hat{w}_j -w_{*} \|^2] +N) + 2\eta^2_tD N \\
 &=&
 \left(1 - \frac{\mu \eta_t}{2} \right) \mathbb{E}[\| w_{t} - w_{*} \|^2] + a_t (1+\sqrt{\Delta}\tau) D \eta_{t-\tau}^2 [ N\tau + 2L^2 \sum_{j=t-\tau}^{t-1} \mathbb{E}[\| \hat{w}_j -w_{*} \|^2 ] + 2\eta^2_tD N.
 \end{eqnarray*}
 
 Since $E\geq 2\tau$,  $\frac{1}{t-\tau+E}\leq \frac{2}{t+E}$. Hence, together with  $\eta_{t-\tau} = \frac{\alpha_{t-\tau}}{\mu(t-\tau+E)} \leq \frac{\alpha}{\mu(t-\tau+E)}$ we have
 \begin{equation} \eta_{t-\tau}^2 \leq \frac{4\alpha^2}{\mu^2}\frac{1}{(t+E)^2}. \label{etatR} \end{equation}
 This translates the above bound into
 \begin{align*}
\mathbb{E}[\| w_{t+1} - w_{*} \|^2] & \leq \beta_t \mathbb{E}[\| w_{t} - w_{*} \|^2] + \gamma_t,
\end{align*}
for 
\begin{align*}
\beta_t &= 1 - \frac{\mu \eta_t}{2}, \\
\gamma_t &=  4 a_t (1+\sqrt{\Delta}\tau)  D  \frac{\alpha^2}{\mu^2}\frac{1}{(t+E)^2} [ N\tau + 2L^2 \sum_{j=t-\tau}^{t-1} \mathbb{E}[\| \hat{w}_j -w_{*} \|^2 ] + 2\eta^2_tD N, where \\
a_t &=  (L+\mu)\eta_t + 2L^2\eta_t^2D.
\end{align*}

Application of Lemma \ref{lemma:hogwild_recursive_form} for $Y_{t+1} = \mathbb{E}[\| w_{t+1} - w_{*} \|^2]$ and $Y_{t} = \mathbb{E}[\| w_{t} - w_{*} \|^2]$ gives
\begin{align*}
\mathbb{E}[\| w_{t+1} - w_{*} \|^2] \leq \left(\sum_{i=0}^t \left[\prod_{j=i+1}^t \left(1 - \frac{\mu \eta_j}{2} \right) \right]\gamma_i \right)+\left(\prod_{j=0}^t \left(1 - \frac{\mu \eta_j}{2} \right) \right) \mathbb{E}[\| w_{0} - w_{*} \|^2]. 
\end{align*}
In order to analyze this formula, since $\eta_j = \frac{\alpha_j}{\mu(j + E)}$ with $\alpha_j\geq 4$, we have
\begin{align*}
1 - \frac{\mu \eta_j}{2} &= 1 - \frac{\alpha_j}{2(j+E)} \leq 1 - \frac{2}{j +E}, 
\end{align*}
%\begin{align*}
%1 - \frac{\mu \eta_j}{2} &= 1 - \frac{\alpha}{2(j+E)} \leq 1 - \frac{1}{j +E} = \frac{j + E - 1}{j + E}, 
%\end{align*}
Hence (we can also use $1-x\leq e^{-x}$ which leads to similar results and can be used to show that our choice for $\eta_t$ leads to the tightest convergence rates in our framework), 
%\begin{align*}
%\prod_{j=1}^t \left(1 - \frac{\mu \eta_j}{2} \right) &\leq \prod_{j=1}^t \left( \frac{j + E - 1}{j + E} \right) = \frac{E}{t + E}, \tagthis \label{eq_newlemma02_02} \\
%\prod_{j=i+1}^t \left(1 - \frac{\mu \eta_j}{2} \right) &\leq \prod_{j=i+1}^t \left( \frac{j + E - 1}{j + E} \right) = \frac{i + E}{t + E}. \tagthis \label{eq_newlemma02_03}
%\end{align*}
\begin{align*}
\prod_{j=i}^t \left(1 - \frac{\mu \eta_j}{2} \right) &\leq \prod_{j=i}^t \left(1-\frac{2}{j+E}\right) =  \prod_{j=i}^t \frac{j+E-2}{j+E}\\
 &=\frac{i+E-2}{i+E}\frac{i+E-1}{i+E+1}\frac{i+E}{i+E+2}\frac{i+E+1}{i+E+3}\dotsc\frac{t+E-3}{t+E-1}\frac{t+E-2}{t+E} \\
 &=\frac{(i+E-2)(i+E-1)}{(t+E-1)(t+E)} \leq \frac{(i+E-1)^2}{(t+E-1)(t+E)} \leq \frac{(i+E)^2}{(t+E-1)^2}   .  %\tagthis \label{eq:hogwild_eq02} \\
 %\text{ and } \prod_{j=1}^t (1-\frac{2}{j+E}) &\overset{\eqref{eq:hogwild_eq01}}{\leq}  \frac{E(E-1)}{(t+E-1)(t+E)} \leq \frac{E^2}{(t+E-1)(t+E)} \leq \frac{E^2}{(t+E-1)^2}. %  \tagthis \label{eq:hogwild_eq03} 
\end{align*}

%%%%%%%%% IT ALL WORKS: beta_t a bit better -- or just alpha larger; R(t)<=t gives E(t)<=2t gives with alhha lower bound 4*3 -- yep works (alpha new lower bound, i/(i+E)<=1 etc. %%% 

From this calculation  we infer that
\begin{align*}
\mathbb{E}[\| w_{t+1} - w_{*} \|^2] \leq \left(\sum_{i=0}^t \left[ \frac{(i + E)^2}{(t + E - 1)^2} \right]\gamma_i \right)+ \frac{(E+1)^2}{(t + E - 1)^2} \mathbb{E}[\| w_{0} - w_{*} \|^2]. \tagthis \label{eq_newlemma02_041}
\end{align*}

Now, we substitute  $\eta_i \leq \frac{\alpha}{\mu(i + E)}$ in $\gamma_i$ and compute
\begin{align*}
&  \frac{(i + E)^2}{(t + E - 1)^2} \gamma_i \\
&=  \frac{(i + E)^2}{(t + E - 1)^2}
 4 a_i (1+\sqrt{\Delta}\tau)  D  \frac{\alpha^2}{\mu^2}\frac{1}{(i+E)^2} [ N\tau + 2L^2 \sum_{j=i-\tau}^{i-1} \mathbb{E}[\| \hat{w}_j -w_* \|^2 ] 
   +  \frac{(i + E)^2}{(t + E - 1)^2} 2 N D \frac{\alpha^2}{\mu^2(i + E)^2} \\
 &= \frac{\alpha^2D }{\mu^2} \frac{1}{(t + E - 1)^2} \left[ 4 a_i (1+\sqrt{\Delta}\tau)   [ N\tau + 2L^2 \sum_{j=i-\tau}^{i-1} \mathbb{E}[\| \hat{w}_j -w_{*} \|^2 ] +
 2 N \right]. 
\end{align*}
Substituting this in \eqref{eq_newlemma02_041} proves the lemma. 
\end{proof}

As an immediate corollary we can apply the inequality $\|a+b\|^2\leq 2\|a\|^2+2\|b\|^2$ to $\mathbb{E}[\|\hat{w}_{t+1} - w_* \|^2]$ to obtain 
\begin{equation}
\label{eq:eq11111}
\mathbb{E}[\|\hat{w}_{t+1} - w_* \|^2] \leq 2\mathbb{E}[\|\hat{w}_{t+1} - w_{t+1} \|^2]
+2\mathbb{E}[\|w_{t+1} - w_*\|^2],
\end{equation}
which in turn can be bounded by the previous lemma together with Lemma \ref{lemma:hogwild_21}:
\begin{eqnarray*}
\mathbb{E}[\|\hat{w}_{t+1} - w_* \|^2]
& \leq& 2(1+\sqrt{\Delta}\tau)  D \sum_{j=t+1-\tau}^{t} \eta_j^2 (2L^2 \mathbb{E}[\| \hat{w}_j -w_{*} \|^2] +N) + \\
&& 2\frac{\alpha^2D }{\mu^2} \frac{1}{(t + E - 1)^2} \left(\sum_{i=1}^t
 \left[ 4 a_i (1+\sqrt{\Delta}\tau)   [ N\tau + 2L^2 \sum_{j=i-\tau}^{i-1} \mathbb{E}[\| \hat{w}_j -w_{*} \|^2 ] +
 2 N \right]
\right)  + \\
&& \frac{(E+1)^2}{(t + E - 1)^2} \mathbb{E}[\| w_{0} - w_{*} \|^2]. 
\end{eqnarray*}

Now assume a decreasing sequence $Z_t$ for which we want to prove that $\mathbb{E}[\| \hat{w}_{t} - w_{*} \|^2]\leq Z_t$ by induction in $t$. Then, the above bound can be used together with the property that $Z_t$ and $\eta_t$ are decreasing in $t$ to show
$$\sum_{j=t+1-\tau}^{t} \eta_j^2 (2L^2 \mathbb{E}[\| \hat{w}_j -w_* \|^2] +N) \leq \tau  \eta_{t-\tau}^2 (2L^2 Z_{t+1-\tau} +N) \leq
4\tau \frac{\alpha^2}{\mu^2}\frac{1}{(t+E-1)^2}  (2L^2 Z_{t+1-\tau} +N),$$
where the last inequality follows from (\ref{etatR}), and
$$\sum_{j=i-\tau}^{i-1} \mathbb{E}[\| \hat{w}_j -w_{*} \|^2 ]\leq \tau Z_{i-\tau}.$$
From these inequalities we infer
\begin{eqnarray}
\mathbb{E}[\|\hat{w}_{t+1} - w_* \|^2]
& \leq& 8(1+\sqrt{\Delta}\tau) \tau D \frac{\alpha^2}{\mu^2}\frac{1}{(t+E-1)^2}  (2L^2 Z_{t+1-\tau} +N) + \nonumber \\
&& 2\frac{\alpha^2D }{\mu^2} \frac{1}{(t + E - 1)^2} \left(\sum_{i=1}^t
 \left[ 4 a_i (1+\sqrt{\Delta}\tau)  [ N\tau + 2L^2 \tau Z_{i-\tau}] +
 2 N \right]
\right)  + \nonumber \\
&& \frac{(E+1)^2}{(t + E - 1)^2} \mathbb{E}[\| w_{0} - w_{*} \|^2] . \label{bestbound}
%&\leq& 2R^2 D  \eta_{t-R}^2 (2L^2 Z_{t+1-R} +N)
\end{eqnarray}

Even if we assume a constant $Z\geq Z_0\geq Z_1\geq Z_2 \geq \ldots$, we can get a first bound on the convergence rate of vectors $\hat{w}^t$: Substituting $Z$ gives
\begin{eqnarray}
\mathbb{E}[\|\hat{w}_{t+1} - w_* \|^2]
& \leq& 8(1+\sqrt{\Delta}\tau) \tau D \frac{\alpha^2}{\mu^2}\frac{1}{(t+E-1)^2}  (2L^2 Z +N) + \nonumber \\
&& 2\frac{\alpha^2D }{\mu^2} \frac{1}{(t + E - 1)^2} \left(\sum_{i=1}^t
 \left[ 4 a_i (1+\sqrt{\Delta}\tau)   [ N\tau + 2L^2 \tau Z] +
 2 N \right]
\right)  + \nonumber \\
&& \frac{(E+1)^2}{(t + E - 1)^2} \mathbb{E}[\| w_{0} - w_{*} \|^2] . \label{eqforGen}
%&\leq& 2R^2 D  \eta_{t-R}^2 (2L^2 Z_{t+1-R} +N)
\end{eqnarray}

Since $a_i = (L+\mu)\eta_i + 2L^2\eta_i^2 D$ and $\eta_i \leq \frac{\alpha}{\mu(i + E)}$, we have
\begin{align*}
\sum_{i=1}^t a_i &= (L+\mu) \sum_{i=1}^t \eta_i + 2 L^2 D \sum_{i=1}^t \eta_i^2 \\
&\leq (L+\mu) \sum_{i=1}^t \frac{\alpha}{\mu(i + E)} + 2 L^2 D\sum_{i=1}^t \frac{\alpha^2}{\mu^2(i + E)^2} \\
&\leq \frac{(L+\mu)\alpha}{\mu} \sum_{i=1}^t \frac{1}{i} + \frac{2 L^2 \alpha^2D}{\mu^2} \sum_{i=1}^t \frac{1}{i^2} \\
&\leq \frac{(L+\mu)\alpha}{\mu} (1 + \ln t) + \frac{ L^2 \alpha^2 D\pi^2}{3 \mu^2}, \tagthis \label{eq_newlemma03_0211}
\end{align*}
where the last inequality is a property of the harmonic sequence $\sum_{i=1}^t \frac{1}{i}\leq 1 + \ln t$ and $\sum_{i=1}^t \frac{1}{i^2} \leq \sum_{i=1}^{\infty} \frac{1}{i^2} = \frac{\pi^2}{6}$. 

Substituting (\ref{eq_newlemma03_0211}) in (\ref{eqforGen}) and collecting terms yields
\begin{eqnarray}
&& \mathbb{E}[\|\hat{w}_{t+1} - w_* \|^2] \nonumber \\
& \leq&
2\frac{\alpha^2D }{\mu^2} \frac{1}{(t + E - 1)^2} \left(2N t + 
4(1+\sqrt{\Delta}\tau) \tau [N+2L^2Z] \left\{ \frac{(L+\mu)\alpha}{\mu} (1 + \ln t) + \frac{ L^2 \alpha^2 D\pi^2}{3 \mu^2 +1} \right\}
\right)  + \nonumber  \\
&& \frac{(E+1)^2}{(t + E - 1)^2} \mathbb{E}[\| w_{0} - w_{*} \|^2] .\label{refbound}
%&\leq& 2R^2 D  \eta_{t-R}^2 (2L^2 Z_{t+1-R} +N)
\end{eqnarray}
Notice that the asymptotic behavior in $t$ is dominated by the term
$$ \frac{4\alpha^2D N}{\mu^2} \frac{t}{(t + E - 1)^2}.$$
If we define $Z_{t+1}$ to be the right hand side of (\ref{refbound}) and observe that this $Z_{t+1}$ is decreasing and a constant $Z$ exists (since the terms with $Z$ decrease much faster in $t$ compared to the dominating term), then  this $Z_{t+1}$ satisfies the derivations done above  and a proof by induction can be completed. 

Our derivations prove our main result: The expected convergence rate of read vectors is 
 \begin{eqnarray*}
\mathbb{E}[\|\hat{w}_{t+1} - w_* \|^2]
& \leq&  \frac{4\alpha^2D N}{\mu^2} \frac{t}{(t + E - 1)^2} + O\left(\frac{\ln t}{(t+E-1)^{2}}\right).
\end{eqnarray*}
We can use this result in Lemma \ref{lemma:hogwild_4_new1} in order to show that
%$\mathbb{E}[\|w_{t+1} - w_* \|^2]$ has the same  convergence rate) and
  the expected convergence rate $\mathbb{E}[\|w_{t+1} - w_* \|^2]$ satisfies the same bound. 

We remind the reader, that in the $(t+1)$-th iteration at most $\leq \lceil |D_{\xi_t}|/D \rceil$ vector positions are updated. Therefore the expected number of single vector entry updates is at most $\bar{\Delta}_D /D$.

%We use  $t\geq t'/K$, see  (\ref{eqK}), to translate the convergence rate in terms of the total number $t'$ of vector position updates that have happend so far (we also use $1/(t+E-1)\leq 1/t$ and $D=\lceil \Delta/K \rceil$):

%Our derivations prove our main result (we can use this result in Lemma \ref{lemma:hogwild_4_new1} in order to show that
%$\mathbb{E}[\|w_{t+1} - w_* \|^2]$ has the same  convergence rate):

\textbf{Theorem~\ref{theorem:Hogwild_newnew1}}. 
%\begin{thm}
%\label{theorem:Hogwild_newnew}
 \textit{Suppose Assumptions \ref{ass_stronglyconvex}, \ref{ass_smooth}, \ref{ass_convex} and \ref{ass_tau}  and  consider Algorithm~\ref{HogWildAlgorithm}. Let  $\eta_t = \frac{\alpha_t}{\mu(t+E)}$ with $4\leq \alpha_t \leq\alpha$ and $E = \max\{ 2\tau, \frac{4 L \alpha D}{\mu}\}$. Then, $t' = t \bar{\Delta}_D /D$ is the expected number of single vector entry updates after $t$ iterations and
 expectations
 $\mathbb{E}[\|\hat{w}_{t} - w_* \|^2]$ and $\mathbb{E}[\|w_{t} - w_* \|^2]$ are at most
  \begin{eqnarray*}
&&  \frac{4\alpha^2D N}{\mu^2} \frac{t}{(t + E - 1)^2} + O\left(\frac{\ln t}{(t+E-1)^{2}}\right).
%\frac{4\alpha^2 \bar{\Delta}_D N}{\mu^2} \frac{1}{t'} + O\left(\frac{\ln t'}{t'^{2}}\right).
\end{eqnarray*} 
}

\subsection{Convergence without Convexity of Component Functions}
 
 For the non-convex case, $L$ in (\ref{LN}) must be replaced by $L\kappa$ and as a result $L^2$ in Lemma \ref{lemma:hogwild_21} must be replaced by $L^2\kappa$.  Also $L$ in (\ref{ineq:bounded_lemma3_hogwild}) must be replaced by $L\kappa$. We now require that $\eta_t \leq \frac{1}{4L\kappa D}$ so that $-2\eta_t(1-4L\kappa \eta_tD) [F(w_{t}) - F(w_{*})]\leq 0$. This leads to Lemma \ref{lemma:hogwild_1_new1} where no changes are needed except requiring  $\eta_t\leq \frac{1}{4L\kappa D}$. The changes in Lemmas \ref{lemma:hogwild_21} and \ref{lemma:hogwild_1_new1}  lead to a Lemma \ref{lemma:hogwild_4_new1} where we require $E\geq \frac{4L\kappa \alpha D}{\mu}$ and where in the bound of the expectation $L^2$ must be replaced by $L^2\kappa$. This perculates through to inequality (\ref{refbound})  with a similar change finally leading to Theorem \ref{thm_6}, i.e., Theorem \ref{theorem:Hogwild_newnew1} where we only need to strengthen the condition on $E$ to $E\geq \frac{4L\kappa \alpha D}{\mu}$ in order to remove Assumption \ref{ass_convex}.
 
 \subsection{Sensitivity to $\tau$}
 
 What about the upper bound's sensitivity with respect to $\tau$?
 Suppose $\tau$ is not a constant but an increasing function of $t$, which also makes $E$ a function of $t$:
 $$ \frac{2 L \alpha D}{\mu}\leq \tau(t) \leq t  \mbox{ and } E(t)=2\tau(t).$$
 In order to obtain a similar theorem we  increase the lower bound on $\alpha_t$ to 
 $$ 12\leq \alpha_t \leq \alpha.$$
 This allows us to modify the proof of Lemma \ref{lemma:hogwild_4_new1} where we analyse the product
 $$ \prod_{j=i}^t \left(1 - \frac{\mu \eta_j}{2} \right).$$
 Since $\alpha_j\geq 12$ and $E(j)=2\tau(j)\leq 2j$,
 $$ 1-\frac{\mu \eta_j}{2} = 1 - \frac{\alpha_j}{2(j+E(j))}\leq 1- \frac{12}{2(j+2j)}=1-\frac{2}{j}\leq 1-\frac{2}{j+1}.$$
 The remaining part of the proof of Lemma \ref{lemma:hogwild_4_new1} continues as before where constant $E$ in the proof is replaced by $1$. This yields
 instead of (\ref{eq_newlemma02_041})  \begin{align*}
\mathbb{E}[\| w_{t+1} - w_{*} \|^2] \leq \left(\sum_{i=1}^t \left[ \frac{(i+1)^2}{t^2} \right]\gamma_i \right)+ \frac{4}{t^2} \mathbb{E}[\| w_{0} - w_{*} \|^2]. 
\end{align*}
We again substitute  $\eta_i \leq \frac{\alpha}{\mu(i + E(i))}$ in $\gamma_i$, realize that $\frac{(i+1)}{(i+E(i))}\leq 1$, and compute
\begin{align*}
& \frac{(i + 1)^2}{t^2} \gamma_i \\
 &=  \frac{(i + 1)^2}{t^2}
 4 a_i (1+\sqrt{\Delta}\tau(i)) D  \frac{\alpha^2}{\mu^2}\frac{1}{(i+E(i))^2} [ N\tau(i) + 2L^2 \sum_{j=i-\tau(i)}^{i-1} \mathbb{E}[\| \hat{w}_j -w_* \|^2 ] 
   +  \frac{(i + 1)^2}{t^2} 2 N D \frac{\alpha^2}{\mu^2(i + E(i))^2} \\
 &\leq \frac{\alpha^2D }{\mu^2} \frac{1}{t^2} \left[ 4 a_i (1+\sqrt{\Delta}\tau(i))  [ N\tau(i) + 2L^2 \sum_{j=i-\tau(i)}^{i-1} \mathbb{E}[\| \hat{w}_j -w_{*} \|^2 ] +
 2 N \right]. 
\end{align*}
This gives a new Lemma  \ref{lemma:hogwild_4_new1}:

\begin{lem}\label{lemma:hogwild_4_new1_Gen} Assume  $\frac{2 L \alpha D}{\mu}\leq \tau(t) \leq t$ with $\tau(t)$ monotonic increasing.
Let $\eta_t = \frac{\alpha_t}{\mu(t+E(t))}$ with $12\leq \alpha_t \leq\alpha$ and $E(t) = 2\tau(t)$. Then, expectation $\mathbb{E}[\| w_{t+1} - w_{*} \|^2]$ is at most
\begin{align*}
 \frac{\alpha^2D }{\mu^2} \frac{1}{t^2} \left(\sum_{i=1}^t
 \left[ 4 a_i (1+\sqrt{\Delta}\tau(i))  [ N\tau(i) + 2L^2 \sum_{j=i-\tau(i)}^{i-1} \mathbb{E}[\| \hat{w}_j -w_* \|^2 ] +
 2 N \right]
\right)+ \frac{4}{t^2} \mathbb{E}[\| w_{0} - w_{*} \|^2],
% \sum_{i=1}^t \frac{\alpha^2}{\mu^2} \frac{1}{(t + E - 1)^2} \left[ 8R^2 a_i [D^2\mathbb{E}[m_{i-1}] + \frac{RN}{2}] + 2 N D\right] + \frac{E^2}{(t+E-1)^2} \mathbb{E}[\| w_{1} - w_{*} \|^2].
\end{align*} 
where $a_i = (L+\mu)\eta_i + 2L^2\eta_i^2D$.
\end{lem}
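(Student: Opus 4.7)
\medskip
\noindent\textbf{Proof plan for Lemma \ref{lemma:hogwild_4_new1_Gen}.} The plan is to mirror the proof of Lemma \ref{lemma:hogwild_4_new1}, treating $E = E(t) = 2\tau(t)$ as a function of $t$ rather than a constant, and then compensate for the possibly large value of $E(j)$ by strengthening the lower bound on $\alpha_j$ from $4$ to $12$. The starting point is the recurrence (\ref{eq:rec}), which remains valid because the hypothesis $\tau(t) \geq 2L\alpha D/\mu$ gives $E(t) \geq 4L\alpha D/\mu$, and hence $\eta_t = \alpha_t/(\mu(t+E(t))) \leq 1/(4LD)$ as required.

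First, I would apply Lemma \ref{lemma:hogwild_recursive_form} with $Y_{t+1} = \mathbb{E}[\|w_{t+1}-w_*\|^2]$, $\beta_j = 1 - \mu\eta_j/2$, and $\gamma_i = 4a_i(1+\sqrt{\Delta}\tau(i))D(\alpha^2/\mu^2)(i+E(i))^{-2}[N\tau(i) + 2L^2\sum_{j=i-\tau(i)}^{i-1}\mathbb{E}[\|\hat{w}_j-w_*\|^2]] + 2\eta_i^2 DN$, exactly as in the proof of Lemma \ref{lemma:hogwild_4_new1}. This gives
\begin{equation*}
\mathbb{E}[\|w_{t+1}-w_*\|^2] \leq \sum_{i=1}^{t} \Bigl[\prod_{j=i+1}^{t}\beta_j\Bigr]\gamma_i + \Bigl[\prod_{j=0}^{t}\beta_j\Bigr]\mathbb{E}[\|w_0-w_*\|^2].
\end{equation*}

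The main new ingredient is the bound on the product $\prod_{j=i}^{t}\beta_j$ when $E(j)$ is allowed to grow with $j$. Because $\tau(j)\leq j$ implies $E(j)=2\tau(j)\leq 2j$, and because $\alpha_j\geq 12$, we get
\begin{equation*}
1 - \frac{\mu\eta_j}{2} = 1 - \frac{\alpha_j}{2(j+E(j))} \leq 1 - \frac{12}{2(j+2j)} = 1 - \frac{2}{j} \leq 1 - \frac{2}{j+1}.
\end{equation*}
From here the telescoping argument used in the original Lemma \ref{lemma:hogwild_4_new1} goes through verbatim with the constant $E$ replaced by $1$, producing the factors $(i+1)^2/t^2$ in front of $\gamma_i$ and $4/t^2$ in front of $\|w_0-w_*\|^2$. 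This explains the choice of raising the lower bound on $\alpha_j$ from $4$ to $12$: it is exactly what is needed so that even in the worst case $E(j)\approx 2j$, the telescoping still gives a $1/t^2$ decay in the coefficient.

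Finally, substituting $\eta_i \leq \alpha/(\mu(i+E(i)))$ in $\gamma_i$ and using $(i+1)/(i+E(i)) \leq 1$ (since $E(i) = 2\tau(i)$ with $\tau(i)\geq 2L\alpha D/\mu \geq 1$ for any reasonable parameter regime, and in any case $E(i)\geq 1$) cancels the $(i+E(i))^{-2}$ factor against the $(i+1)^2$ factor, leaving the claimed bound. The main obstacle is verifying that raising $\alpha_j$ to $12$ is sufficient for every $j$ in the allowed range of $\tau(j)$; the calculation above shows this is tight precisely at $\tau(j)=j$, which is the worst case permitted by the hypothesis. Once this product estimate is in hand, the remaining algebra is identical to that in Lemma \ref{lemma:hogwild_4_new1}.
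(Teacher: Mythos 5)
Your proposal matches the paper's own proof essentially step for step: the same recurrence and Lemma \ref{lemma:hogwild_recursive_form}, the same key estimate $1-\frac{\mu\eta_j}{2}\leq 1-\frac{12}{2(j+2j)}=1-\frac{2}{j}\leq 1-\frac{2}{j+1}$ exploiting $\alpha_j\geq 12$ and $E(j)=2\tau(j)\leq 2j$, the same telescoping with the constant $E$ replaced by $1$ giving the factors $(i+1)^2/t^2$ and $4/t^2$, and the same cancellation via $(i+1)/(i+E(i))\leq 1$. No substantive differences from the paper's argument.
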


Now we can continue the same analysis that led to Theorem \ref{theorem:Hogwild_newnew1} and conclude that there exists a constant $Z$ such that, see (\ref{eqforGen}),
\begin{eqnarray}
\mathbb{E}[\|\hat{w}_{t+1} - w_* \|^2]
& \leq& 8 (1+\sqrt{\Delta}\tau(t))\tau(t) D \frac{\alpha^2}{\mu^2}\frac{1}{t^2}  (2L^2 Z +N) + \nonumber \\
&& 2\frac{\alpha^2D }{\mu^2} \frac{1}{t^2} \left(\sum_{i=1}^t
 \left[ 4 a_i  (1+\sqrt{\Delta}\tau(i))  [ N\tau(i) + 2L^2 \tau(i) Z] +
 2 N \right]
\right)  + \nonumber \\
&& \frac{4}{t^2} \mathbb{E}[\| w_{0} - w_{*} \|^2] . \label{eqderGen1}
%&\leq& 2R^2 D  \eta_{t-R}^2 (2L^2 Z_{t+1-R} +N)
\end{eqnarray}

Let us assume
\begin{equation}
\tau(t)\leq \sqrt{t \cdot L(t)}, \label{eqR1}
\end{equation}
where 
$$L(t)= \frac{1}{\ln t} - \frac{1}{(\ln t)^2}$$
which has the property that the derivative of $t/(\ln t)$ is equal to $L(t)$.
Now we observe
\begin{eqnarray*}
 \sum_{i=1}^t a_i \tau(i)^2 &=& \sum_{i=1}^t [(L+\mu)\eta_i + 2L^2\eta_i^2D]\tau(i)^2
\leq  \sum_{i=1}^t [(L+\mu)\frac{\alpha}{\mu i} + 2L^2\frac{\alpha^2}{\mu^2 i^2}D]\cdot i L(i) \\
&=& \frac{(L+\mu)\alpha}{\mu}  \sum_{i=1}^t  L(i) + O(\ln t)
= \frac{(L+\mu)\alpha}{\mu}  \frac{t}{\ln t} + O(\ln t) 
\end{eqnarray*}
and
\begin{eqnarray*}
 \sum_{i=1}^t a_i \tau(i) &=& \sum_{i=1}^t [(L+\mu)\eta_i + 2L^2\eta_i^2D]\tau(i)
\leq  \sum_{i=1}^t [(L+\mu)\frac{\alpha}{\mu i} + 2L^2\frac{\alpha^2}{\mu^2 i^2}D]\cdot \sqrt{i}) \\
&=& O(\sum_{i=1}^t  \frac{1}{\sqrt{i}})
=  O(\sqrt{t}) .
\end{eqnarray*}

%x=e^y
%\int_{x=1}^t \frac{1}{\ln x} dx = \int_{y=0}^{\ln t} \frac{e^y}{y} dy = e^y \ln y 
Substituting both inequalities in  (\ref{eqderGen1}) gives
\begin{eqnarray}
\mathbb{E}[\|\hat{w}_{t+1} - w_* \|^2]
& \leq& 8  (1+\sqrt{\Delta}\tau(t))\tau(t) D \frac{\alpha^2}{\mu^2}\frac{1}{t^2}  (2L^2 Z +N) + \nonumber \\
&& 2\frac{\alpha^2D }{\mu^2} \frac{1}{t^2} \left(2Nt + 4\sqrt{\Delta}[  \frac{(L+\mu)\alpha}{\mu} \frac{t}{\ln t} + O(\ln t) ]  [ N + 2L^2 Z] +O(\sqrt{t})
\right)  + \nonumber \\
&& \frac{4}{t^2} \mathbb{E}[\| w_{0} - w_{*} \|^2]  \nonumber \\
&\leq &
 2\frac{\alpha^2D }{\mu^2} \frac{1}{t^2} \left(2Nt + 4\sqrt{\Delta}[ (1+ \frac{(L+\mu)\alpha}{\mu}) \frac{t}{\ln t} + O(\ln t) ]  [ N + 2L^2 Z] +O(\sqrt{t})
\right)  + \nonumber \\
&& \frac{4}{t^2} \mathbb{E}[\| w_{0} - w_{*} \|^2]   
    \label{ineqterms}
%&\leq& 2R^2 D  \eta_{t-R}^2 (2L^2 Z_{t+1-R} +N)
\end{eqnarray}
Again we define $Z_{t+1}$ as the right hand side of this inequality.  Notice that $Z_t = O(1/t)$, since the above derivation
 proves
$$ \mathbb{E}[\|\hat{w}_{t+1} - w_* \|^2] \leq  \frac{4\alpha^2D N}{\mu^2} \frac{1}{t} + O(\frac{1}{t\ln t}).$$

Summarizing we have the following main lemma:

\begin{lem} \label{lemtau}
Let Assumptions \ref{ass_stronglyconvex}, \ref{ass_smooth}, \ref{ass_convex} and \ref{ass_tau} hold and  consider Algorithm~\ref{HogWildAlgorithm}. 
Assume  $\frac{2 L \alpha D}{\mu}\leq \tau(t)\leq \sqrt{t \cdot L(t)}$ with $\tau(t)$ monotonic increasing.
Let  $\eta_t = \frac{\alpha_t}{\mu(t+2\tau(t))}$ with $12\leq \alpha_t \leq\alpha$. Then,
 the expected convergence rate of read vectors is 
 \begin{eqnarray*}
\mathbb{E}[\|\hat{w}_{t+1} - w_* \|^2]
& \leq& 
 \frac{4\alpha^2D N}{\mu^2} \frac{1}{t} + O(\frac{1}{t\ln t}),
\end{eqnarray*}
 where $L(t)=\frac{1}{\ln t} - \frac{1}{(\ln t)^2}$. The expected convergence rate $\mathbb{E}[\|w_{t+1} - w_* \|^2]$ satisfies the same bound.
 \end{lem}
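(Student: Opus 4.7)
The plan is to mimic the earlier analysis of Lemma \ref{lemma:hogwild_4_new1} but with the fixed constant $E$ replaced throughout by $E(t) = 2\tau(t)$, using the strengthened lower bound $\alpha_t \geq 12$ to compensate for the fact that $E(t)$ is no longer constant. First I would observe that the per-iteration recurrence (\ref{eq:rec}) still holds unchanged, since Lemmas \ref{lem:expect}, \ref{lemma:hogwild_21}, and \ref{lemma:hogwild_1_new1} only require $\eta_t \le 1/(4LD)$; this step-size bound follows from $\tau(t) \ge 2L\alpha D/\mu$, which makes $E(t) \ge 4L\alpha D/\mu$. Then I would apply Lemma \ref{lemma:hogwild_recursive_form} with $\beta_t = 1 - \mu\eta_t/2$ and the appropriate $\gamma_t$ as before.

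The key technical modification lies in bounding the product $\prod_{j=i}^t (1 - \mu\eta_j/2)$. With $\alpha_j \ge 12$ and $E(j) = 2\tau(j) \le 2j$, one checks $1 - \mu\eta_j/2 \le 1 - 2/(j+1)$, so the product telescopes to at most $(i+1)^2/t^2$ instead of the $(i+E)^2/(t+E-1)^2$ bound of the constant-$\tau$ case. This yields a modified Lemma \ref{lemma:hogwild_4_new1_Gen} bounding $\mathbb{E}[\|w_{t+1} - w_*\|^2]$ by
\[
\frac{\alpha^2 D}{\mu^2}\frac{1}{t^2}\sum_{i=1}^t \Bigl[4 a_i (1 + \sqrt{\Delta}\tau(i))\bigl(N\tau(i) + 2L^2 \textstyle\sum_{j=i-\tau(i)}^{i-1}\mathbb{E}[\|\hat{w}_j - w_*\|^2]\bigr) + 2N\Bigr] + \frac{4}{t^2}\|w_0 - w_*\|^2,
\]
after simplifying $\gamma_i$ using $\eta_i \le \alpha/(\mu(i+E(i)))$ and $(i+1)/(i+E(i)) \le 1$. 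I would then pass to $\mathbb{E}[\|\hat{w}_{t+1} - w_*\|^2]$ via $\|\hat{w}_{t+1} - w_*\|^2 \le 2\|\hat{w}_{t+1} - w_{t+1}\|^2 + 2\|w_{t+1} - w_*\|^2$ combined with Lemma \ref{lemma:hogwild_21}.

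The analytic heart of the argument is the sum estimates
\[
\sum_{i=1}^t a_i \tau(i)^2 = O(t/\ln t), \qquad \sum_{i=1}^t a_i \tau(i) = O(\sqrt{t}),
\]
which exploit $a_i = O(1/i)$ together with $\tau(i)^2 \le i\, L(i)$. For the first sum the choice $L(t) = 1/\ln t - 1/(\ln t)^2$ is tailored so that $L(t) = (d/dt)(t/\ln t)$, so that $\sum_{i=1}^t L(i) \sim t/\ln t$ by a Riemann-sum comparison. To close the recursion I would then bootstrap: assume a constant upper bound $Z$ on $\mathbb{E}[\|\hat{w}_j - w_*\|^2]$ for all $j$, substitute it into the right-hand side, verify that the resulting $Z_{t+1}$ is uniformly bounded and decays like $O(1/t)$, and invoke induction in $t$. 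The main obstacle I anticipate is verifying self-consistency of the bootstrap: one must check that the $Z$-dependent terms in the final bound are genuinely of lower order than the leading $4\alpha^2 D N/(\mu^2 t)$ term. Under $\tau(t)\le \sqrt{t L(t)}$, those terms are multiplied by factors of $\tau(t)/t^2$ or $(t/\ln t)/t^2 = 1/(t\ln t)$, both of which are $o(1/t)$, so the induction hypothesis is indeed self-consistent and yields the claimed convergence rate together with the matching bound for $\mathbb{E}[\|w_{t+1} - w_*\|^2]$.
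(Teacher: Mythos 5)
Your proposal is correct and follows essentially the same route as the paper's own argument: the same modification of the telescoping product via $\alpha_t\geq 12$ and $E(j)=2\tau(j)\leq 2j$ to get the $(i+1)^2/t^2$ factor, the same generalized version of Lemma \ref{lemma:hogwild_4_new1}, the same sum estimates $\sum_i a_i\tau(i)^2=O(t/\ln t)$ and $\sum_i a_i\tau(i)=O(\sqrt{t})$ exploiting that $L(t)$ is the derivative of $t/\ln t$, and the same bootstrap with a constant $Z$ whose terms are absorbed into the $O(1/(t\ln t))$ remainder. No gaps to report.
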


Notice that we can plug $Z_t = O(1/t)$ back into an equivalent of (\ref{bestbound}) where we may bound $Z_{i-\tau(i)}=O(1/(i-\tau(i))$ which replaces $Z$ in the second line of (\ref{eqforGen}). On careful examination this leads to a new upper bound (\ref{ineqterms}) where the $2L^2Z$ terms gets absorped in a higher order term.
This can be used to show that, for
$$ t\geq T_0 = \exp[ 2\sqrt{\Delta}(1+\frac{(L+\mu)\alpha}{\mu})],$$
the higher order terms that contain $\tau(t)$ (as defined above) are at most the leading term as given in Lemma \ref{lemtau}.

% 1 >= 2   sqrtD (1+a+La/mu)  /ln t
% t >= \exp{ 2\sqrt{\Delta}(1+\frac{(L+\mu)\alpha}{\mu})}

 Upper bound (\ref{ineqterms}) also  shows that, for  
$$ t \geq T_1 = \frac{\mu^2}{\alpha^2 N D}\|w_0-w_*\|^2,$$
the higher order term that contains $\|w_0-w_*\|^2$ is at most the leading term. 
%\input{hogwild_UCONN_New_20180204.tex}
%\input{hogwild_main.tex}
%\input{hogwild.tex}

%
%\newpage 
%$\ $
%\newpage 
%\part{THIS CONTAINS A VERY VERY OLD STUFF, DO NOT EVEN LOOK THERE :)}
%
%\newpage 
%
%\input{bin.tex} 

\end{document}